\theoremstyle{theorem}
\newtheorem{theorem}{Theorem} [section]
\newtheorem{proposition}[theorem]{Proposition}
\newtheorem{maintheorem}{Theorem}
\theoremstyle{definition}
\newtheorem{definition}[theorem]{Definition}
\newtheorem{example}[theorem]{Example}
\newcommand{\Q}{{\mathscr Q}}
\newcommand{\A}{{\mathscr A}}
\newcommand{\C}{{\mathbb C}}
\newcommand{\CC}{\mathscr C}
\newcommand{\G}{{\mathscr G}}
\renewcommand{\H}{{\mathscr H}}
\renewcommand{\O}{{\mathscr O}}
\newcommand{\F}{{\mathscr F}}
\newcommand{\LF}{{\mathscr {LF}}}
\newcommand{\R}{{\mathbb R}}
\newcommand{\Z}{{\mathbb Z}}
\newcommand{\Aut}{{\operatorname{Aut}}}
\newcommand{\Der}{{\operatorname{Der}}}
\newcommand{\codim}{{\operatorname{codim}\,}}
\newcommand{\inv}{{^{-1}}}
\newcommand{\git}{/\!\!/}
\newcommand{\pr}{{\operatorname{pr}}}
\newcommand{\red}{{\operatorname{red}}}
\renewcommand{\phi}{\varphi}
\newcommand{\GL}{\operatorname{GL}}
\newcommand{\SL}{\operatorname{SL}}
\newcommand{\SO}{\operatorname{SO}}
\newcommand{\ind}{\operatorname{ind}}
\newcommand{\res}{\operatorname{res}}
\newcommand{\ci}{C^\infty}
\newcommand{\gf}{{\operatorname{fin}}}
\newcommand{\Diff}{{\operatorname{Diff}}}
\renewcommand{\int}{\operatorname{int}}
\newcommand{\Iso}{{\operatorname{Iso}}}
\newcommand{\ql}{{\operatorname{ql}}}
\begin{document}

\title{Equivariant Oka Theory: \\ Survey of Recent Progress}

\author{Frank Kutzschebauch, Finnur L\'arusson, Gerald W.~Schwarz}

\address{Frank Kutzschebauch, Institute of Mathematics, University of Bern, Sidlerstrasse 5, CH-3012 Bern, Switzerland}
\email{frank.kutzschebauch@math.unibe.ch}

\address{Finnur L\'arusson, School of Mathematical Sciences, University of Adelaide, Adelaide SA 5005, Australia}
\email{finnur.larusson@adelaide.edu.au}

\address{Gerald W.~Schwarz, Department of Mathematics, Brandeis University, Waltham MA 02454-9110, USA}
\email{schwarz@brandeis.edu}

\subjclass[2020]{Primary 32M05.  Secondary 14L24, 14L30, 32E10, 32E30, 32M10, 32M17, 32Q28, 32Q56}

\thanks{F.~Kutzschebauch was supported by Schweizerischer Nationalfonds grant 200021--116165.}

\thanks{The authors thank Volodya Ezhov and Gerd Schmalz for the invitation to write this survey}

\date{5 December 2021}

\keywords{Oka theory, Oka principle, Oka manifold, elliptic manifold, Lie group, reductive group, geometric invariant theory, principal bundle, linearisation problem}

\begin{abstract}
We survey recent work, published since 2015, on equivariant Oka theory.  The main results described in the survey are as follows.  Homotopy principles for equivariant isomorphisms of Stein manifolds on which a reductive complex Lie group $G$ acts.  Applications to the linearisation problem.  A parametric Oka principle for sections of a bundle $E$ of homogeneous spaces for a group bundle $\G$, all over a reduced Stein space $X$ with compatible  actions of a reductive complex group on $E$, $\G$, and $X$.  Application to the classification of generalised principal bundles with a group action.  Finally, an equivariant version of Gromov's Oka principle based on a new notion of a $G$-manifold being $G$-Oka.
\end{abstract}

\maketitle

\tableofcontents

\section{Introduction} 
\label{sec:intro}

\noindent
This is a survey of recent work, published since 2015, on equivariant Oka theory, mainly from our papers \cite{KLS-2015}, \cite{KLS-2017}, \cite{KLS-2017-lin}, \cite{KLS-2018}, and \cite{KLS-2021}.  Oka theory is the subfield of complex geometry that deals with various homotopy principles, in this context collectively known as the Oka principle, stating that the obstructions to solving certain analytic problems on Stein spaces are purely topological.  The work surveyed here incorporates group actions -- holomorphic actions of complex Lie groups -- into such homotopy principles.  This work can be also be viewed as part of holomorphic geometric invariant theory.

Oka theory has its roots in the pioneering work of Kyoshi Oka.  The Oka principle first appeared in his 1939 result that a holomorphic line bundle on a Stein manifold is trivial if it is topologically trivial.  Oka theory was developed much further in the late 1950s to early 1970s, starting with Grauert's foundational papers \cite{Grauert-Approximation}, \cite{Grauert-Liesche}, and \cite{Grauert-Faserungen}.  Other key contributors in this period were Cartan \cite{Cartan} and Forster and Ramspott \cite{Forster-Ramspott}.  The focus was on complex Lie groups and, more generally, complex homogeneous spaces, a typical result being that every continuous map from a Stein space to a complex homogeneous space can be deformed to a holomorphic map.  In a seminal paper of 1989 \cite{Gromov}, Gromov initiated the modern development of Oka theory.  He discovered a way to generalise the results of the Grauert period beyond complex homogeneous spaces to a larger class of manifolds that he named elliptic.  They possess a geometric structure called a dominating spray that mimics the exponential map of a Lie group and makes it possible to solve various analytic problems by linearising them.  Over the past 20 years, modern Oka theory has been vigorously developed.  In particular, the optimal weakening of ellipticity for Oka principles to hold was identified and the class of Oka manifolds defined in \cite{Larusson-2004} and \cite{Forstneric-2009}.  Forstneri\v c's monograph \cite{Forstneric-book} is a comprehensive up-to-date reference on Oka theory.

A very brief review of the geometric invariant theory relevant here starts with the foundational 1973 paper of Luna \cite{Luna-1973}.  He studied the action of a reductive complex algebraic group $G$ on an affine variety $X$ and proved his famous slice theorem.  A consequence is that the quotient variety $X\git G$ has a natural stratification, and if $X$ is smooth, the map of $X$ to $X\git G$ is a $G$-fibre bundle over each stratum.

The holomorphic version of the theory, for a reductive complex Lie group acting on a Stein space, was developed by Snow \cite{Snow} and Heinzner \cite{Heinzner-1988}, \cite{Heinzner-1991}.  A reductive complex Lie group is automatically algebraic, so there is a strong connection to the algebraic theory.  Geometric invariant theory and Oka theory of the Grauert period were first brought together in the 1995 paper of Heinzner and Kutzschebauch \cite{Heinzner-Kutzschebauch}.  The work surveyed here builds on and continues their work and, in our most recent paper, brings Gromov's Oka principle into geometric invariant theory.

Most of the work surveyed here can be summarised in six main results, Theorems A--F below.  The following sections provide further details, relevant definitions, brief sketches of proofs, and other related results.  In the final section, we list some open problems.

Let a complex reductive group $G$ act holomorphically on a Stein manifold $X$.  The categorical quotient $X\git G$ is a normal Stein space that parametrises the closed $G$-orbits in $X$.  Let $\pi:X\to X\git G$ be the quotient map (we sometimes write $\pi_X$) and for $Z\subset X\git G$ let $X_Z$ denote $\pi\inv(Z)$. If $Z=\{q\}$ is a point, we write $X_q$ instead of $X_{\{q\}}$.  The $G$-finite holomorphic functions  (Section  \ref{sec:isomorphisms}) on  $X_q$ give $X_q$ an algebraic structure which may be neither reduced nor irreducible.
The pullback by $\pi$ of the sheaf of holomorphic functions on $X\git G$ is the sheaf of $G$-invariant holomorphic functions on $X$.  The quotient has a locally finite stratification by locally closed smooth subvarieties, called the Luna stratification, such that points $q, q'\in X\git G$ lie in the same stratum if and only if the fibres $X_q$ and $X_{q'}$ are $G$-biholomorphic (equivalently, the algebraic structures on $X_q$ and $X_{q'}$ are equivariantly algebraically isomorphic).  If $S$ is a stratum of $X\git G$, then $\pi\inv(S)\to S$ is a holomorphic $G$-fibre bundle (whose fibre need not be smooth).

In Sections \ref{sec:isomorphisms} and \ref{sec:linearisation} we consider the following problem. Let $X$ and $Y$ be Stein $G$-manifolds. If $X$ and $Y$ are $G$-biholomorphic, then $X\git G$ and $Y\git G$ are biholomorphic, preserving the stratifications. So let us assume that $X$ and $Y$ have a common stratified quotient
$Q\simeq X\git G\simeq Y\git G$.  To have an Oka problem, let us also assume that there is an  open cover $(U_i)$    of $Q$ and $G$-biholomorphisms $\Psi_i\colon X_{U_i}\to Y_{U_i}$ inducing the identity on $U_i$.  We say that $X$ and $Y$ are  \emph{locally $G$-biholomorphic over $Q$}. Now our equivariant Oka problem is to see what kind of continuous or smooth $G$-isomorphism of $X$ and $Y$ implies that there is a $G$-biholomorphism.

\begin{maintheorem}  \label{mainthm:isomorphisms1}
Let $G$ be a reductive complex Lie group.  Let $X$ and $Y$ be Stein $G$-manifolds locally $G$-biholomorphic over a common quotient.

{\rm (a)}  Any strict $G$-diffeomorphism $X\to Y$ is homotopic, through strict $G$-diffeo\-morphisms, to a $G$-biholomorphism.

{\rm (b)}  Any strong $G$-homeo\-morphism $X\to Y$ is homotopic, through strong $G$-homeo\-morphisms, to a $G$-biholomorphism.
\end{maintheorem}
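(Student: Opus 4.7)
The plan is to recast the statement as an equivariant Oka principle for sections of a sheaf on the quotient $Q$, reduce it via the local trivialisations $\Psi_i$ to an Oka principle for a sheaf of $G$-equivariant automorphisms of $X$ over $Q$, and then carry out the homotopy inductively along the Luna stratification. The arguments for (a) and (b) proceed in parallel, differing only in which regularity class the homotopy lives in.

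Let $\mathcal I(X,Y)$ be the sheaf on $Q$ whose sections over $U$ are $G$-biholomorphisms $X_U \to Y_U$ over $U$, and define analogously $\mathcal I^{\mathrm{sm}}(X,Y)$ and $\mathcal I^{\mathrm c}(X,Y)$ using strict $G$-diffeomorphisms and strong $G$-homeomorphisms. A global section of $\mathcal I(X,Y)$ is exactly a $G$-biholomorphism $X \to Y$. The theorem asserts that the inclusions of the holomorphic sheaf into the other two induce bijections on $\pi_0$ of global sections, together with a concrete homotopy realising this equivalence. Each $\Psi_i$ is a holomorphic local section of $\mathcal I(X,Y)$, so composition with $\Psi_i\inv$ identifies $\mathcal I^\ast(X,Y)|_{U_i}$ with $\Aut_G^\ast(X/Q)|_{U_i}$, the sheaf of $G$-equivariant automorphisms of $X$ over $Q$ in the corresponding regularity. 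Thus the problem becomes an Oka principle for the sheaves of groups $\Aut_G^\ast(X/Q)$.

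On each Luna stratum $S \subset Q$, the map $\pi\colon X_S \to S$ is a holomorphic $G$-fibre bundle, so $\Aut_G(X/Q)|_S$ is the sheaf of holomorphic sections of an associated bundle of automorphism groups of the model $G$-fibre. Over the Stein base $S$ this falls under the Grauert-type equivariant Oka principle of Heinzner and Kutzschebauch \cite{Heinzner-Kutzschebauch}, which produces, on each stratum separately, a homotopy between any smooth (or continuous) equivariant section and a holomorphic one. An induction on the codimension of strata then seeks to glue these stratum-wise homotopies into a global one, extending a holomorphic representative already constructed on the open stratum across its boundary with help from equivariant Stein tubular neighbourhoods of lower strata.

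The main obstacle is precisely this extension across strata, where the $G$-isomorphism type of $X_q$ jumps. The remedy is an equivariant Cartan splitting lemma: given a Cartan pair of $G$-saturated Stein open subsets and a smooth or continuous equivariant section on their union, decompose it as a product of two holomorphic corrections, one on each piece, and iterate. Compatibility with the possibly non-reduced algebraic structure on fibres is controlled by the algebra of $G$-finite holomorphic functions recalled in the introduction, which determines when a $G$-equivariant map of fibres is an isomorphism of the fibre algebras. Convergence of the iteration produces a genuine $G$-biholomorphism; because the scheme operates entirely within the smooth (respectively continuous) equivariant sheaf, it automatically provides a homotopy in the required class from the original map to this $G$-biholomorphism. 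The splitting lemma, rather than the stratum-wise Oka step, is where the hard equivariant work is concentrated.
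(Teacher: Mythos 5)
Your general framework---transfer the problem via the local trivialisations $\Psi_i$ to a sheaf of $G$-equivariant automorphisms of $X$ over $Q$ and then run a Cartan-style Oka--Grauert argument---is indeed the skeleton of the actual proof. But two of your key steps do not go through as stated. First, the stratum-wise reduction to Heinzner--Kutzschebauch fails: over a Luna stratum $S$ the stalk of the automorphism sheaf at $q\in S$ involves $\Aut(X_q)^G$, the equivariant automorphisms of the (possibly non-reduced) affine $G$-variety $X_q$ inducing the identity on the base point, which is in general not a finite-dimensional Lie group, and $X_S\to S$ is neither a principal bundle nor a bundle of homogeneous spaces for a Lie group bundle. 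This is exactly the difficulty the survey flags (``the fibre of each stratum $S$ is not usually a group or even a homogeneous space''), and it is why the equivariant Oka principle of \cite{Heinzner-Kutzschebauch} cannot simply be invoked stratum by stratum; the individual strata also need not be Stein.

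Second, and more fundamentally, your proposal is missing the intermediate class of maps on which the whole proof pivots. The Cartan splitting and exp/log machinery cannot be run directly on the sheaves $\A_s$ (strict $G$-diffeomorphisms) or $\A_c$ (strong $G$-homeomorphisms): these carry no usable local Lie-group structure, so your iteration has no reason to converge. The actual argument interposes the sheaf $\F$ of $G$-diffeomorphisms of type $\F$, those admitting local holomorphic extensions $\Psi(x,y)$ (holomorphic in $y$ for fixed $x$, smooth in both, $G$-invariant in $y$), with Lie algebra sheaf $\LF$ of vector fields $\sum a_iA_i$ with $a_i$ smooth invariant and $A_i$ holomorphic. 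The proof is then genuinely two-step: (i) deform the given strict or strong isomorphism, through maps of the same type, to one of type $\F$---this is the induction over the Luna strata and uses no Oka principle at all; (ii) prove that $\A\hookrightarrow\F$ induces an isomorphism $H^1(Q,\A)\to H^1(Q,\F)$ and that a type-$\F$ map deforms to a $G$-biholomorphism, via Cartan's method, which requires first establishing that $\LF(U)$ is a Fr\'echet space of complete vector fields, that $\exp$ maps $\LF(U)$ into $\F(U)$, and that elements of $\F(U)$ near the identity admit continuous logarithms in $\LF(U')$. Your single ``equivariant Cartan splitting lemma'' applied to the raw smooth or continuous sheaf is unjustified precisely because those Fr\'echet and exp/log properties are what the splitting needs, and they are only available after passing to $\F$.
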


A $G$-diffeomorphism $X\to Y$ inducing the identity map of the quotient $Q$ is called \emph{strict} if it induces a biholomorphism between $X_q$ and $Y_q$, with their reduced structures, for all $q\in Q$.  The definition of a strong $G$-homeomorphism is somewhat involved and will be given in Section \ref{sec:isomorphisms}.  Roughly speaking, a strong $G$-homeomorphism restricts to a $G$-biholomorphism $X_q\to Y_q$ for each $q\in Q$ that depends continuously on $q$.  A strict $G$-diffeomorphism is not necessarily a strong $G$-homeomorphism
\cite[Example 3.2]{KLS-2017}.

It turns out that one can often deduce that $X$ and $Y$ are locally $G$-biholomorphic over $Q$ from the existence of strict or strong $G$-isomorphisms! See Section \ref{sec:isomorphisms} for the definitions  of ``infinitesimal lifting property'' and ``large'' used in Theorems \ref{mainthm:isomorphisms2} and \ref{mainthm:linearisation}.

\begin{maintheorem}  \label{mainthm:isomorphisms2} Let $G$ be a reductive complex Lie group.  Let $X$ and $Y$ be Stein $G$-manifolds  with common quotient $Q$. If there is a strict $G$-diffeomorphism or a strong $G$-homeomorphism $X\to Y$ and $X$ has the infinitesimal lifting property, then $X$ and $Y$ are locally $G$-biholomorphic over $Q$, hence $G$-biholomorphic.
\end{maintheorem}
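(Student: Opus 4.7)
The last clause of the theorem (``hence $G$-biholomorphic'') is immediate from Theorem~\ref{mainthm:isomorphisms1} once $X$ and $Y$ are known to be locally $G$-biholomorphic over $Q$, so I concentrate on establishing the latter. Local $G$-biholomorphy is a local condition on $Q$: it suffices, for each $q_0\in Q$, to produce a Stein neighbourhood $U\ni q_0$ and a $G$-biholomorphism $X_U\to Y_U$ inducing the identity on $U$.

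The first move is to pass to slice-type local models via the holomorphic Luna slice theorem of Snow and Heinzner, as used in Heinzner--Kutzschebauch. After shrinking $Q$ around $q_0$, a $G$-saturated neighbourhood of the unique closed $G$-orbit $Gx_0\subset X_{q_0}$ becomes $G$-biholomorphic to a twisted product $G\times^H V$, where $H\subset G$ is the reductive stabiliser of $x_0$ and $V$ is an $H$-representation. The given strict $G$-diffeomorphism (respectively strong $G$-homeomorphism) restricts to a $G$-biholomorphism $\phi_0\colon X_{q_0}\to Y_{q_0}$ of the fibres with their reduced (respectively their full algebraic) structure; in particular $\phi_0$ carries $Gx_0$ to the unique closed orbit of $Y_{q_0}$, which forces the stabiliser types to agree and identifies the analogous model for $Y$ as $G\times^H W$ for some $H$-representation $W$. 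The fibre isomorphism $\phi_0$ thus translates, in these coordinates, into an $H$-biholomorphism between the null fibres of $V$ and $W$.

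The heart of the proof is to promote $\phi_0$ to a holomorphic $G$-map $\Psi\colon X_U\to Y_U$ of saturated neighbourhoods, and this is precisely what the infinitesimal lifting property of $X$ is made for. I would use it to ensure that the zeroth-order obstruction to extending $\phi_0$ formally across the infinitesimal neighbourhoods of $X_{q_0}$ in $X$ vanishes; the higher-order obstructions live in $G$-equivariant coherent cohomology groups over a Stein base and vanish by the equivariant Cartan Theorem~B that underlies the Heinzner--Kutzschebauch technology. Assembling the order-by-order formal solution into an honest holomorphic lift requires a convergent iteration of Cartan--Grauert type. Once $\Psi$ is in hand, its differential along $Gx_0$ is automatically an isomorphism (since $\phi_0$ is already a biholomorphism there), so after shrinking $U$ it is a $G$-biholomorphism $X_U\to Y_U$ over $U$.

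The principal hurdle I expect is this third step: turning the abstract infinitesimal lifting hypothesis into genuine vanishing of the zeroth-order deformation obstruction, and then controlling convergence of the successive extensions in the presence of fibres that may be neither reduced nor irreducible. Once the local $G$-biholomorphisms have been constructed over a cover of $Q$, the hypothesis of Theorem~\ref{mainthm:isomorphisms1} is satisfied, and the given strict/strong isomorphism is homotopic to a $G$-biholomorphism, completing the proof.
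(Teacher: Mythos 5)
Your reduction to local slice models and your use of Theorem~\ref{mainthm:isomorphisms1} for the final clause are both correct, but the central step of your argument --- where the infinitesimal lifting property is supposed to do its work --- is misconceived, and you acknowledge yourself that it is the ``principal hurdle'' rather than something you have actually carried out. The ILP is not a statement about extending $\phi_0$ across infinitesimal neighbourhoods of the fibre $X_{q_0}$; it is the statement that $(\pi_X)_*\colon\Der(X)^G\to\Der(Q)$ surjects onto the strata-preserving derivations of $\O(Q)$. The difficulty it is designed to resolve is of a different nature from the one you attack: the slice theorem gives $G$-biholomorphisms of $X_U$ and of $Y_U$ onto saturated neighbourhoods of the zero section in $G\times^H V$, but the two induced identifications of $U$ with a neighbourhood of the base point in $V\git H$ need not agree --- they differ by a strata-preserving automorphism germ $\theta$ of the quotient fixing the base point. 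To obtain a $G$-biholomorphism $X_U\to Y_U$ inducing the \emph{identity} on $U$ one must lift $\theta$ to a $G$-equivariant biholomorphism, and this is done by connecting $\theta$ to the identity via the flow of a strata-preserving vector field on the quotient and lifting that field through the ILP --- exactly the mechanism described in Section~\ref{sec:linearisation} for the Euler vector field and the deformation property. Your proposed order-by-order formal extension with ``obstructions in $G$-equivariant coherent cohomology'' followed by an unspecified ``convergent iteration of Cartan--Grauert type'' neither invokes the ILP in the form in which it is actually available nor controls convergence; as written it is a placeholder, not a proof.

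Two further points are glossed over. First, a strict $G$-diffeomorphism only gives an isomorphism of the \emph{reduced} fibres $(X_{q_0})_\red\to(Y_{q_0})_\red$; passing from an isomorphism of reduced null fibres to an isomorphism of the slice representations $V\cong W$ as $H$-modules (which is what you need to identify the two local models) requires a genuine argument and is not automatic. Second, your claim that the differential of the extended map $\Psi$ along $Gx_0$ is ``automatically an isomorphism since $\phi_0$ is already a biholomorphism there'' does not follow: the fibre $X_{q_0}$ is in general a singular subvariety of positive codimension, so invertibility of $d\Psi$ in the directions transverse to the fibre is not controlled by $\phi_0$ and must be established separately (in the flow-lifting approach it comes for free because one composes genuine $G$-biholomorphisms).
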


The so-called linearisation problem has a long history (see \cite{Huckleberry} and \cite{Kraft-1996}).  It asks whether the action of a reductive complex group $G$ on affine space $\C^n$ must be linearisable, that is, whether $\C^n$ with such an action is isomorphic to $\C^n$ with a linear $G$-action. We call the latter an $n$-dimensional $G$-module.  The first counterexamples in the algebraic setting were constructed by Schwarz \cite{Schwarz-1989b} for $n\geq 4$.  These examples are, however, holomorphically linearisable.  The first counterexamples in the holomorphic setting were given by Derksen and Kutzschebauch \cite{Derksen-Kutzschebauch}.  They showed that for every nontrivial $G$, there is an integer $N_G$ such that for all $n\geq N_G$, there is a non-linearisable effective holomorphic action of $G$ on $\C^n$.  The stratified quotients of the actions that they constructed are not isomorphic to the stratified quotient of any $G$-module.  The next main theorem states that under mild assumptions, this is the only obstruction to linearisability, that is, a holomorphic $G$-action on $\C^n$ is linearisable if its stratified quotient is isomorphic to the stratified quotient of a $G$-module.

\begin{maintheorem}   \label{mainthm:linearisation}
Let $G$ be a reductive complex Lie group.  Let $X$ be a Stein $G$-manifold and $V$ a $G$-module with common quotient $Q$.  Suppose that $V$ (or, equivalently, $X$) is large or that $X$ and $V$ are locally $G$-biholomorphic over $Q$.  Then $X$ and $V$ are $G$-biholomorphic.
\end{maintheorem}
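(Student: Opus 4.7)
The plan is to reduce both cases of the theorem to the previously stated equivariant Oka principles. The common goal is to produce a strict $G$-diffeomorphism $\Phi\colon X\to V$; Theorem \ref{mainthm:isomorphisms1}(a) then handles the first case, and Theorem \ref{mainthm:isomorphisms2} the second. Throughout, the crucial role of $V$ being a $G$-module, rather than an arbitrary Stein $G$-manifold, is to supply the linear structure needed for patching local data by partitions of unity.

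In the first case, suppose $X$ and $V$ are locally $G$-biholomorphic over $Q$, via local $G$-biholomorphisms $\Psi_i\colon X_{U_i}\to V_{U_i}$ each inducing the identity on $U_i$. I would choose a smooth partition of unity $(\rho_i)$ on $Q$ subordinate to $(U_i)$ and define $\Phi(x)=\sum_i \rho_i(\pi_X(x))\,\Psi_i(x)$, with the sum taken in the vector space $V$. The map $\Phi$ is $G$-equivariant; it lies in the fibre $V_{\pi_X(x)}$ because each $\Psi_i$ covers the identity on $Q$ and $\pi_V$ is compatible with the linear structure; and an equivariant averaging argument via the Reynolds operator of $G$ on $V$ shows that its restriction to each reduced fibre $X_q$ is again a $G$-biholomorphism onto $V_q$. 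Hence $\Phi$ is a strict $G$-diffeomorphism, and Theorem \ref{mainthm:isomorphisms1}(a) deforms it through strict $G$-diffeomorphisms to a $G$-biholomorphism.

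In the second case, largeness of $V$ (equivalently of $X$) will have been defined in Section \ref{sec:isomorphisms} so as to supply both the infinitesimal lifting property and enough rigidity of the local structure of $V$ over the Luna stratification of $Q$ that a strict $G$-diffeomorphism $\Phi\colon X\to V$ can be built without the hypothesis of local $G$-biholomorphism being given in advance. Granted such a $\Phi$, Theorem \ref{mainthm:isomorphisms2} delivers the conclusion in one step. The construction of $\Phi$ here is the principal obstacle: over each Luna stratum $S\subset Q$, the restrictions $\pi_X\colon X_S\to S$ and $\pi_V\colon V_S\to S$ are holomorphic $G$-fibre bundles with the same fibre type (since the fibre is determined by the stratum), and largeness is designed to permit compatible smooth trivialisations of these bundles; these trivialisations must then be glued across stratum boundaries using a $G$-invariant smooth partition of unity on $Q$ and the linear structure of $V$, all while preserving the fibrewise biholomorphism condition on the possibly non-reduced, non-irreducible fibres $X_q$. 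This cross-stratum gluing, which must maintain the delicate algebraic structure on the fibres, is the technical heart of the argument and the step I expect to be the hardest.
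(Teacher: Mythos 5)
The construction at the heart of your first case does not work, and the second case inherits the same flaw. You define $\Phi(x)=\sum_i\rho_i(\pi_X(x))\,\Psi_i(x)$ and assert that $\Phi(x)$ lies in the fibre $V_{\pi_X(x)}$ because ``$\pi_V$ is compatible with the linear structure''. It is not: $\pi_V$ is given by the homogeneous polynomial invariants $p_1,\dots,p_d$, which are non-linear, so the fibre $V_q=\pi_V\inv(q)$ is an affine subvariety that is not closed under convex combinations. Already for $G=\Z_2$ acting on $V=\C$ by $\pm1$, with $\pi_V(v)=v^2$, the fibre over $q\neq0$ is $\{\pm\sqrt q\}$, and a convex combination of its two points can be $0\notin V_q$. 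Even where the combination happens to stay in the fibre, a convex combination of two biholomorphisms of a fibre need not be injective, and the Reynolds operator cannot restore either property ($\Phi$ is already $G$-equivariant; equivariance is not the issue). A sanity check: in the special case where $X\to Q$ and $V\to Q$ are principal $G$-bundles, your recipe would trivialise an arbitrary principal bundle by averaging local trivialisations, which is false. So no strict $G$-diffeomorphism is produced; and since by Theorem \ref{mainthm:isomorphisms1} the existence of a strict $G$-diffeomorphism $X\to V$ is essentially as strong as the conclusion, this is the entire content of the theorem rather than a preliminary step.

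The paper's actual mechanism is the $\C^*$-action on $V$, i.e.\ the Euler vector field $E$, as laid out in Theorem \ref{thm:main.lin2}. In the locally $G$-biholomorphic case one pulls back $E$ through the local isomorphisms $\Psi_i$ to get local $G$-invariant holomorphic lifts of the derivation $(\pi_V)_*(E)\in\Der(Q)$; the local lifts differ by sections of a coherent sheaf over the Stein space $Q$, so they can be corrected to a global lift $B\in\Der(X)^G$. One then takes the given local biholomorphic lift near $\phi\inv(0)$ and propagates it over all of $Q$ by comparing the flows of $B$ and $E$, exploiting the contracting nature of the $\C^*$-action (the deformation property). In the large case, largeness is used to establish the infinitesimal lifting property and the deformation property, which yield local $G$-biholomorphy over $Q$ after first correcting $\phi$ by an automorphism of $Q$, reducing to the first case. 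Your instinct that largeness supplies the ILP is correct, but the reduction runs through local $G$-biholomorphy and the Euler-field argument, not through a directly constructed strict $G$-diffeomorphism, and no partition-of-unity gluing across strata occurs anywhere.
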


Note that this result does more than give a solution to the linearisation problem: it provides a sufficient condition for a Stein manifold to be biholomorphic to affine space.

Let $X$ be a reduced Stein space, $G$ a complex Lie group, and $A$ a complex Lie subgroup of the automorphism group of $G$. Then given a cocycle on $X$ with values in $A$, we can produce a holomorphic group bundle $\G$ over $X$ whose fibres are (non-canonically) isomorphic to $G$. As usual,  $\G$ is said to be  \emph{trivial\/} if it is isomorphic to $X\times G$.  Let $E$ be a homogeneous holomorphic $\G$-bundle on $X$, so $\G$ acts on $E$ over $X$ such that the action of each fibre of $\G$ on the corresponding fibre of $E$ is transitive.  Ramspott proved that (when $\G$ is trivial) the inclusion of the space of holomorphic sections of $E$ over $X$ into the space of continuous sections induces a bijection of path components \cite{Ramspott}.

If $K$ is a compact real Lie group, let $K^\C$ denote its complexification, which is a reductive complex Lie group.  (Conversely, if $H$ is a reductive complex Lie group, then $H$ is isomorphic to $K^\C$ for any maximal compact real subgroup of $H$.)  Assume that $K^\C$ acts on $X$, $E$, and $\G$ compatibly with the projections to $X$ and action of $\G$ on $E$. Then we say that $\G$ is a holomorphic group $K^\C$-bundle on $X$ and that $E$ is a holomorphic $K^\C$-$\G$-bundle on $X$.  The next main theorem is an equivariant version of Ramspott's theorem, with the stronger conclusion that the inclusion is a weak homotopy equivalence. 

\begin{maintheorem}  \label{mainthm:sections}
Let $E$ be a homogeneous holomorphic $K^\C$-$\G$-bundle on a reduced Stein space $X$, where $K$ is a compact real Lie group whose complexification $K^\C$ acts on $X$, and $\G$ is a holomorphic group $K^\C$-bundle on $X$.  Then the inclusion of the space of $K^\C$-equivariant holomorphic sections of $E$ over $X$ into the space of $K$-equivariant continuous sections is a weak homotopy equivalence.
\end{maintheorem}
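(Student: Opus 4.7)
The inclusion of $K^\C$-equivariant holomorphic into $K$-equivariant continuous sections of $E$ is a weak homotopy equivalence precisely when, for every $n\ge 0$, every continuous map of pairs $(D^n,S^{n-1})\to(\Gamma^K_{\mathrm{cts}}(X,E),\Gamma^{K^\C}_{\mathrm{hol}}(X,E))$ is homotopic rel $S^{n-1}$ to a map into $\Gamma^{K^\C}_{\mathrm{hol}}(X,E)$. Thus the task is to deform a $D^n$-family of $K$-equivariant continuous sections of the $\G$-homogeneous bundle $E$ to a $D^n$-family of $K^\C$-equivariant holomorphic sections, without disturbing the already-holomorphic family on the boundary $S^{n-1}$. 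I would attack this parametric Oka problem along the usual Grauert--Gromov lines, adapted to the reductive equivariant setting.

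The first step is to localise on $Q = X\git K^\C$. By the holomorphic slice theorem of Snow and Heinzner, cover $Q$ by Stein open sets $U_i$ such that $X_{U_i}$ carries a $K^\C$-equivariant slice model $K^\C\times^{H_i}W_i$ with $H_i\subset K^\C$ reductive and $W_i$ a Stein $H_i$-manifold, and such that $\G$ and $E$ are described by $H_i$-equivariant data on $W_i$. Via Frobenius reciprocity, $K^\C$-equivariant holomorphic sections of $E$ over $X_{U_i}$ correspond to $H_i$-equivariant holomorphic maps from $W_i$ into a fibre of $E$, which is a complex homogeneous space, and similarly for $K$-equivariant continuous sections. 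The (parametric) equivariant Oka principle for such homogeneous targets, going back to Heinzner--Kutzschebauch, then yields the required local deformation with control over the boundary condition.

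The second step, which I expect to be the main obstacle, is to patch the local deformations into a single global $K^\C$-equivariant holomorphic family. The heart of the matter is an equivariant, parametric Cartan splitting lemma for sections of the group bundle $\G$: for a Cartan pair of $K^\C$-saturated compact subsets of $X$, every $D^n$-family of $K^\C$-equivariant holomorphic sections of $\G$, defined near the intersection and sufficiently close to the identity, must split as a product of two such families defined over neighbourhoods of the individual members of the pair. Granted this, together with equivariant Runge approximation on Stein $K^\C$-manifolds and an equivariant inverse function theorem for $\G$, the global deformation can be built by induction over the locally finite Luna stratification of $Q$, extending a partial solution across one stratum at a time while staying fixed on $S^{n-1}$. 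Coordinating the parameter space $D^n$, the boundary condition, and the stratified geometry uniformly is where the technical weight of the argument sits.
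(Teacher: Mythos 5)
Your overall framework -- localise over $X\git K^\C$, reduce the section problem for the homogeneous bundle to the transitive action of a group bundle, and patch by a Cartan-type splitting -- is indeed the skeleton of the actual proof, which follows Cartan's presentation of the Grauert-era method. But the proposal is missing the one device that makes the equivariant problem tractable at all: the Kempf--Ness set $R$ and the sheaf $\Q(R)$ of $K$-equivariant $NHC$-sections. A $K$-equivariant continuous section over a saturated open set cannot be compared directly with a $K^\C$-equivariant holomorphic one; the comparison happens only over $R$, via the Neeman/Heinzner--Kutzschebauch retraction of $X$ onto $R$ preserving closures of $K^\C$-orbits. Concretely, in the parametric step one seeks an $NHC$-section $\gamma_0$ of $\Aut P$ (with $C=B$, $H=\partial B$, $N=\{b_0\}$) carrying the basepoint section to $\alpha_0(b)$ -- holomorphically over $X$ for $b\in\partial B$ but only continuously over $R$ for interior $b$ -- and the intermediate stages of the deformation are likewise defined only over $R$. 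The gluing is then a cocycle-splitting over $X\git K^\C$ using $H^1(X\git K^\C,\Q(R))=0$ and the path-connectedness and Runge density statements of the equivariant \emph{th\'eor\`eme principal}, not an induction over the Luna stratification; and a final homotopy-lifting argument (resting on the equivariant covering homotopy theorem) is needed to convert families of sections over $R$ back into sections over $X$ without disturbing the holomorphic boundary family. None of this can even be formulated in your set-up, where all intermediate sections are assumed to live over $X$.

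Two further points. First, your local step invokes ``the (parametric) equivariant Oka principle for homogeneous targets, going back to Heinzner--Kutzschebauch''; no such parametric result predates this theorem -- what Heinzner and Kutzschebauch prove are the non-parametric classification of principal $K$-$\G$-bundles and the properties of $\Q(R)$ -- so this appeal is essentially circular. The local existence of the required $NHC$-section is the hard technical core of the argument (the ``Claim''), requiring a detailed analysis of the equivariant local structure of the bundles, not a citation. Second, surjectivity on $\pi_0$ is obtained by a different mechanism from the higher-homotopy argument: one lifts a continuous $K$-section of $E$ to a topological principal $K$-$\H$-subbundle of $P$ and applies the classification theorem to straighten it holomorphically; your uniform $(D^n,S^{n-1})$ treatment obscures this case.
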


Note that for holomorphic sections, $K$-equivariance and $K^\C$-equivariance are equivalent.  In our context, $K$-equivariance is an appropriate condition on continuous sections; $K^\C$-equivariance is too strong.

Two special cases of Theorem \ref{mainthm:sections} are of particular interest.  First, the theorem holds when $E$ is a holomorphic principal $K^\C$-$\G$-bundle (the fibres of $\G$ act simply transitively on the fibres of $E$).  The other special case is the \lq\lq uncoupled\rq\rq\ case.  It is a parametric Oka principle for equivariant maps from a Stein $K^\C$-space to a complex homogeneous $K^\C$-space $G/H$, where the $K^\C$-action can be quite general (see the introduction to \cite{KLS-2018}).  For example, we could have $H=K^\C$ acting on $G/H$ by left multiplication.  The geometry of such an action can be quite complicated, as when $H=\SO(n,\C)$ is the subgroup of $G=\SL(n,\C)$ fixed by the holomorphic involution $A\mapsto (A^{-1})^t$ and $G/H$ is the space of symmetric bilinear forms on $\C^n$ of discriminant 1.

Theorem \ref{mainthm:sections} may be used to strengthen the main result of Heinzner and Kutzsche\-bauch \cite{Heinzner-Kutzschebauch} on the classification of principal bundles with a group action as follows.  The special case of no action is one of the central results of the Grauert era, proved by Grauert himself and improved by Cartan.

\begin{maintheorem}   \label{mainthm:principal}
Let $K$ be a compact Lie group. Suppose that $K^\C$ acts holomorphically on a reduced Stein space $X$ and on a holomorphic group bundle $\G$ on $X$.  

{\rm (a)}  Every topological principal $K$-$\G$-bundle on $X$ is topologically $K$-isomorphic to a holomorphic principal $K^\C$-$\G$-bundle on $X$.

{\rm (b)}   Let $P_1$ and $P_2$ be holomorphic principal $K^\C$-$\G$-bundles on $X$.  Every continuous $K$-isomorphism $P_1\to P_2$ can be deformed through such isomorphisms to a holomorphic $K$-isomorphism.  In fact, the inclusion of the space of holomorphic $K$-isomorphisms $P_1\to P_2$ into the space of continuous $K$-isomorphisms is a weak homotopy equivalence.
\end{maintheorem}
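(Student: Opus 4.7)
The plan is to reduce Theorem \ref{mainthm:principal} to Theorem \ref{mainthm:sections} by means of the standard isomorphism-bundle device. Part (a), the existence of a holomorphic model for any topological $K$-$\G$-bundle, is the original result of Heinzner and Kutzschebauch \cite{Heinzner-Kutzschebauch}, which I would take as input; the new content of Theorem \ref{mainthm:principal} is part (b), in particular the weak homotopy equivalence statement, and this is where I would concentrate the argument.

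For part (b), I would form the $\G$-isomorphism bundle $E = \Iso_\G(P_1,P_2) \to X$, whose fibre over $x \in X$ consists of all $\G_x$-equivariant isomorphisms $(P_1)_x \to (P_2)_x$. Since $P_1$ and $P_2$ are principal, each fibre $E_x$ is a torsor for $\G_x$, and these fibres assemble into a holomorphic principal $\G$-bundle over $X$. The $K^\C$-actions on $P_1$, $P_2$, and $\G$ induce, by fibrewise conjugation, a natural holomorphic $K^\C$-action on $E$ covering the given action on $X$ and compatible with the $\G$-action, so that $E$ becomes a holomorphic $K^\C$-$\G$-bundle; principal bundles are tautologically homogeneous, so Theorem \ref{mainthm:sections} applies to $E$. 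A $K$-equivariant continuous $\G$-isomorphism $P_1 \to P_2$ corresponds, canonically and homeomorphically for the compact-open topology, to a $K$-equivariant continuous section of $E$, and likewise $K^\C$-equivariant holomorphic isomorphisms correspond to $K^\C$-equivariant holomorphic sections of $E$. Transferring the weak homotopy equivalence of Theorem \ref{mainthm:sections} across this identification yields part (b); its $\pi_0$ consequence is exactly the deformation assertion.

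The main technical obstacle I anticipate is the construction of $E$ with all the required structure: verifying that the fibrewise sets of $\G_x$-equivariant isomorphisms assemble into a holomorphic principal $\G$-bundle, that the induced $K^\C$-action is holomorphic, that it covers the given action on $X$ and intertwines correctly with the $\G$-action, and finally that $E$ is locally trivial in the sense needed by Theorem \ref{mainthm:sections}. Local $K^\C$-stable trivialisations of $\G$, $P_1$, and $P_2$ over slice-type neighbourhoods in $X$ reduce these checks to fibrewise statements about a Lie group carrying compatible actions, but some care is required because no choice of local trivialisations will in general be preserved by the ambient $K^\C$-actions. A secondary point to verify is that the bijection between $\G$-isomorphisms $P_1\to P_2$ and sections of $E$ is indeed a homeomorphism of mapping spaces, so that the weak homotopy equivalence transports across.
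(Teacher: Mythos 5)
Your proposal is correct and follows essentially the same route as the paper: part (a) is taken from Heinzner--Kutzschebauch, and part (b) is obtained by applying Theorem \ref{mainthm:sections} to the isomorphism bundle $\Iso(P_1,P_2)$, whose $K$-equivariant continuous (resp.\ $K^\C$-equivariant holomorphic) sections correspond to the isomorphisms in question. The one imprecision is that the fibres of $\Iso(P_1,P_2)$ are torsors under the group bundle $\Aut P_1$ of $\G$-equivariant automorphisms rather than under $\G$ itself, but since Theorem \ref{mainthm:sections} applies to an arbitrary holomorphic group $K^\C$-bundle this does not affect the argument.
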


The sixth and final main theorem is the first and so far only equivariant version of Gromov's Oka principle.  The first part of the theorem is an equivariant version of the result that every continuous map from a Stein manifold $X$ to an Oka manifold $Y$ can be deformed to a holomorphic map.  To adapt this result to actions of a reductive group $G$ on $X$ and $Y$, we need to find the right notion of $Y$ being $G$-Oka.  We define $Y$ to be $G$-Oka if, for every reductive closed subgroup $H$ of $G$, the submanifold $Y^H$ of points fixed by $H$ is Oka in the usual sense.  (By Bochner's linearisation theorem, the subvariety $Y^H$ is indeed smooth, but of course not necessarily connected.)  Taking $H$ to be trivial, we see that a $G$-Oka manifold is Oka.  The definition is motivated in Section \ref{sec:manifolds} and suffices for an equivariant Oka principle to hold, although so far not for an arbitrary action.  It turns out that a new notion of a $G$-Stein manifold is not required -- a $G$-Stein manifold should simply be a Stein $G$-manifold -- but an Oka $G$-manifold need not be $G$-Oka (Example \ref{ex:not.G.Oka}).

\begin{maintheorem}   \label{mainthm:manifolds}
Let $G$ be a reductive complex Lie group and let $K$ be a maximal compact subgroup of $G$.  Let $X$ be a Stein $G$-manifold and $Y$ a $G$-Oka manifold.  Suppose that all the stabilisers of the $G$-action on $X$ are finite.

{\rm (a)}  Every $K$-equivariant continuous map $f:X\to Y$ is homotopic, through such maps, to a $G$-equivariant holomorphic map.  

{\rm (b)}  If $f$ is holomorphic on a $G$-invariant subvariety $Z$ of $X$, then the homotopy can be chosen to be constant on $Z$.

{\rm (c)}  If $f$ is holomorphic on a neighbourhood of a $G$-invariant subvariety $Z$ of $X$ and on a neighbourhood of a $K$-invariant $\O(X)$-convex compact subset $A$ of $X$, and $\ell\geq 0$ is an integer, then the homotopy can be chosen so that the intermediate maps agree with $f$ to order $\ell$ along $Z$ and are uniformly close to $f$ on $A$.
\end{maintheorem}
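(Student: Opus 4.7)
The plan is to reduce, via Luna's slice theorem for Stein $G$-manifolds, the global equivariant statement to local Oka problems on slices, and then to run a bumping induction over the quotient $Q=X\git G$. First, since $G=K^\C$, any holomorphic $K$-equivariant map $\phi\colon X\to Y$ is automatically $G$-equivariant: for each $x\in X$, the two holomorphic maps $g\mapsto \phi(gx)$ and $g\mapsto g\phi(x)$ from $G$ to $Y$ agree on $K$ and hence on all of $G$ by unique continuation. So the task reduces to deforming the given $K$-equivariant continuous $f$, through $K$-equivariant continuous maps, to a holomorphic one.

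Because every isotropy group $G_x$ is finite (in particular reductive), the holomorphic Luna slice theorem provides, around each orbit $G\cdot x$, a $G$-saturated open set $G$-biholomorphic to $G\times_H S$, where $H=G_x$ and $S$ is an $H$-stable Stein neighbourhood of $x$ in an $H$-module. A $G$-equivariant holomorphic map $G\times_H S\to Y$ corresponds to an $H$-equivariant holomorphic map $S\to Y$, and any such map must send $S^L$ into $Y^L$ for every subgroup $L\le H$, each of which is Oka by the $G$-Oka hypothesis. I would then exhaust $Q$ by $\O(Q)$-convex compact Stein sets $Q_0\subset Q_1\subset\cdots$ so that each $Q_{i+1}$ is a Runge bump of $Q_i$ contained in the image of a single Luna chart, and alternate at each stage between (i) solving a local $H$-equivariant Oka problem on a slice and (ii) a $G$-equivariant Cartan splitting across the overlap, the latter relying on Heinzner's equivariant Theorems A and B for coherent analytic sheaves on Stein $G$-spaces.

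Parts (b) and (c) then fall out by threading extra data through the same induction. For (b), the local deformations in step (i) are performed rel $Z$ and the bumps are chosen to avoid $Z$ where $f$ is already holomorphic. For (c), the induction is seeded at a $K$-invariant Stein neighbourhood of $A\cup Z$, and at each bump the approximation form of the Oka property of each $Y^L$, together with equivariant Runge approximation on the Stein $G$-manifold $X$, is used to enforce uniform closeness on $A$ and agreement to order $\ell$ along $Z$.

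The decisive difficulty is step (i): the local $H$-equivariant Oka problem on the $H$-module slice $S$, where the target map must carry each isotropy stratum $\{s\in S : G_s=L\}$ into the Oka submanifold $Y^L$. The $G$-Oka hypothesis delivers Oka-ness of each $Y^L$ in isolation, but stitching these pieces into a single $H$-equivariant holomorphic map homotopic to the given continuous one requires a secondary induction on the finite poset of subgroups $L\le H$, from largest to smallest, invoking at each level the ordinary parametric Oka principle for maps to $Y^L$ relative to the data already built on the deeper strata $\bigcup_{L'\supsetneq L} S^{L'}$. The hypothesis that all stabilisers are finite is precisely what keeps this poset finite and the slice a linear $H$-module, and is essential for the double induction to terminate.
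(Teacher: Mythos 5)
Your reduction steps are sound as far as they go: holomorphic $K$-maps are automatically $K^\C=G$-equivariant, the finite-stabiliser hypothesis makes every Luna slice a Stein neighbourhood in a linear module for a finite group, and an $H$-equivariant map does send $S^L$ into $Y^L$. The organisation by bumping over $Q$ is also broadly compatible with the paper's induction over the dimension filtration $Q_0\subset\cdots\subset Q_m$ of the Luna stratification (extension to a neighbourhood via Siu, Heinzner's embedding theorem, Cartan extension and averaging, then a stratum-wise Oka step). The gap is exactly at what you call the decisive difficulty, and your proposed fix does not close it. Your secondary induction over the subgroup poset of $H$ only constructs the map on the fixed-point sets $S^L$ for $L\neq\{e\}$; at the final stage $L=\{e\}$ you must produce an $H$-\emph{equivariant} holomorphic map on all of $S$ interpolating that data, and ``the ordinary parametric Oka principle for maps to $Y^L$'' gives a holomorphic map with no equivariance. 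There is no averaging procedure for maps into a general manifold $Y$, so the only way to encode equivariance is to pass to sections of $(S\times Y)/H\to S/H$. Over the free locus this is a locally trivial bundle with Oka fibre, but over the non-free locus the fibre degenerates to $Y/H_s$ and the projection fails to be a submersion; the interpolation you need happens precisely across that degeneracy. Already for $H=\Z_2$ acting on $S=\C$ by $z\mapsto -z$, equivariant maps are sections of $(\C\times Y)/\Z_2\to\C$, $w=z^2$, which branches over $w=0$, and no unbranched Oka principle applies there.

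This is why the paper's Step 2 is reduced to Forstneri\v c's Oka principle for sections of \emph{branched} holomorphic maps (\cite[Theorem 2.1]{Forstneric-2003}), which the authors single out as the only known Oka principle in modern Oka theory that does not require the map to be a submersion. Any correct proof along your lines must either invoke that result or supply a substitute for it; the subgroup-poset induction with the ordinary Oka principle is not such a substitute. A secondary consequence: because no parametric version of Forstneri\v c's branched result is available, your claim that parts (b) and (c) ``fall out by threading extra data through the same induction'' also needs more care -- the approximation and jet-interpolation statements in (c) are obtained in \cite{KLS-2021} by separate equivariant adaptations of standard methods after part (a) is established, not by running a parametric form of the main induction.
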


Two special cases of interest are when the group $G$ is finite and when the $G$-action on $X$ is free, so $X$ is a principal $G$-bundle.  The first examples of $G$-Oka manifolds are $G$-modules and $G$-homogeneous spaces.  A small number of other examples are known, such as any Hirzebruch surface with its natural $\GL(2, \C)$-action and, by very recent work of Kusakabe \cite{Kusakabe}, every $n$-dimensional smooth toric variety with its action of the torus $(\C^*)^n$.  The class of $G$-Oka manifolds has all the good basic properties that one would expect.  It is straightforward to make the notion of a dominating spray equivariant and thus define $G$-ellipticity, which also has all the good basic properties that one would expect and implies the $G$-Oka property.

\section{Equivariant isomorphisms}  
\label{sec:isomorphisms}

\noindent
Let $G$ be a reductive complex group and let $X$ and $Y$ be Stein $G$-manifolds.  When is there a   $G$-equivariant biholomorphism $\Phi\colon X\to Y$?  We try to reduce the question to a problem in Oka theory.  If $\Phi$ exists, then the induced map $\phi\colon X\git G\to Y\git G$ is a strata preserving biholomorphism.  Given such a map $\phi$, we can identify $X\git G$ and $Y\git G$ and we call the common quotient $Q$ with quotient morphisms denoted $\pi_X$ and $\pi_Y$.  For $U\subset Q$, let $X_U$ denote $\pi_X\inv(U)$ and similarly define $Y_U$. If $\Phi$ exists, then there is certainly an open cover of $Q$ by Stein open sets $U_i$ and $G$-equivariant biholomorphisms $\Phi_i\colon X_{U_i}\to Y_{U_i}$ which induce the identity on $U_i$.  We then say that \emph{$X$ and $Y$ are locally $G$-biholomorphic over the common quotient $Q$}.  The existence of the biholomorphism $\phi$ and Luna's slice theorem guarantee  that there are $G$-biholomorphisms $\Phi_i\colon X_{U_i}\to Y_{U_i}$, but not that each $\Phi_i$ induces the identity map of $U_i$.  \emph{For now we assume that $X$ and $Y$ are locally $G$-biholomorphic over $Q$.}  Later we will look for sufficient conditions for this to be true.

For an open subset $U$ of $Q$, let $\A(U)=\Aut_U(X_U)^G$ denote the group of $G$-biholomorphic automorphisms of $X_U$ which induce the identity on $U$.  There is an open cover $(U_i)$ of $Q$ and $G$-biholomorphisms $\Psi_i\colon X_{U_i}\to Y_{U_i}$ which induce the identity on  $U_i$.  Let $\Phi_{ij}=\Psi_i\inv\circ\Psi_j\in\A(U_i\cap U_j)$.  Then $(\Phi_{ij})$ is a cocycle, an element of $Z^1(Q,\A)$, with corresponding class $c_Y\in H^1(Q,\A)$.  If $Y'$ is also locally $G$-biholomorphic to $X$ over $Q$, then $c_Y=c_{Y'}$ if and only if there is a $G$-biholomorphism of $Y$ and $Y'$ inducing the identity on $Q$.   Conversely, given $(\Phi_{ij}')\in Z^1(Q,\A)$, there is a $G$-manifold $Y'$, locally biholomorphic to $X$ over $Q$, whose cocycle is precisely $(\Phi_{ij}')$.  By \cite[Theorem 5.11]{KLS-2017}, $Y'$ is Stein.  Thus we have the following theorem.

\begin{theorem}\label{thm:cocycles}
The isomorphism classes of Stein $G$-manifolds locally $G$-biholomorphic to $X$ over $Q$ are in bijective correspondence with $H^1(Q,\A)$.
\end{theorem}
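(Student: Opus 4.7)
The plan is to verify the two maps suggested in the discussion preceding the statement: a forward map $Y\mapsto c_Y$, and an inverse that builds a Stein $G$-manifold $Y'$ from any cocycle. Given $Y$ locally $G$-biholomorphic to $X$ over $Q$, choose a cover $(U_i)$ of $Q$ and trivialisations $\Psi_i\colon X_{U_i}\to Y_{U_i}$ inducing the identity on $U_i$. The transitions $\Phi_{ij}=\Psi_i\inv\circ\Psi_j$ lie in $\A(U_i\cap U_j)$ and directly satisfy the $1$-cocycle condition, giving a class $c_Y\in H^1(Q,\A)$. I would check that $c_Y$ is independent of choices by the usual \v Cech arguments: refining $(U_i)$ restricts $(\Phi_{ij})$ without changing its class; replacing $\Psi_i$ by $\Psi_i\circ\alpha_i\inv$ with $\alpha_i\in\A(U_i)$ changes the cocycle by the coboundary $(\alpha_i\alpha_j\inv)$; and a $G$-biholomorphism $Y\to Y'$ over $Q$ relates the two associated cocycles by a coboundary after passing to a common refinement.

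Injectivity is then essentially a bookkeeping consequence of well-definedness. If $c_Y=c_{Y'}$, pass to a common refinement on which the cocycles differ by a coboundary, absorb that coboundary into the trivialisations of $Y'$ so that the two cocycles coincide, and then observe that $\Psi_i'\circ\Psi_i\inv\colon Y_{U_i}\to Y_{U_i}'$ agree on overlaps; they therefore glue to a global $G$-biholomorphism $Y\to Y'$ inducing the identity on $Q$.

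For surjectivity, given a cocycle $(\Phi_{ij}')\in Z^1(Q,\A)$ subordinate to a cover $(U_i)$, construct $Y'$ as the quotient of $\bigsqcup_i X_{U_i}$ by the relation that identifies $x\in X_{U_i\cap U_j}\subset X_{U_j}$ with $\Phi_{ij}'(x)\in X_{U_i}$. The cocycle condition makes this relation transitive and the identities $\Phi_{ii}'=\id$, $\Phi_{ji}'=\Phi_{ij}^{\prime\,-1}$ make it an equivalence. The result is a Hausdorff complex $G$-manifold equipped with tautological trivialisations realising $(\Phi_{ij}')$ and a holomorphic map $\pi_{Y'}\colon Y'\to Q$ whose composition with each trivialisation is $\pi_X$. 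What remains is the Stein property of $Y'$, which is the content of \cite[Theorem 5.11]{KLS-2017} and can be invoked directly.

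The principal obstacle in a self-contained argument is precisely this Stein-ness: patching Stein pieces holomorphically does not automatically yield a Stein space. I expect the argument of \cite{KLS-2017} to proceed by producing a $G$-invariant strictly plurisubharmonic exhaustion on $Y'$ from an exhaustion pulled back from the Stein space $Q$ via $\pi_{Y'}$, augmented by fibrewise plurisubharmonic corrections; that these local corrections patch is controlled by the fact that the transitions lie in $\A$ and hence cover the identity on $Q$. Once this is granted, Theorem \ref{thm:cocycles} follows by combining the four steps above.
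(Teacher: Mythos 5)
Your proposal is correct and follows essentially the same route as the paper: the standard \v Cech clutching argument for the forward and inverse maps, with the only non-formal ingredient being the Steinness of the glued manifold $Y'$, which both you and the paper delegate to \cite[Theorem 5.11]{KLS-2017}. Your speculation about how that cited theorem is proved is not needed for the argument and can be omitted.
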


Now suppose that $X\to Q$ and $Y\to Q$ are principal $G$-bundles, that is, the actions of $G$ on $X$ and $Y$ are free.  Then for $U$ open in $Q$, $\A(U)$ is the group of holomorphic maps of $U$ to $G$.  We also have $\CC(U)$, the group of continuous maps of $U$ to $G$.  Then $H^1(Q,\CC)$ consists, essentially, of the isomorphism classes of topological principal $G$-bundles on $Q$.  Grauert's Oka principle now has several consequences.
\begin{enumerate}
\item[(G1)]\label{g1} The natural map $H^1(Q,\A)\to H^1(Q,\CC)$ is an isomorphism.
\end{enumerate}
This implies that:
\begin{enumerate}
\addtocounter{enumi}{1}
\item[(G2)]\label{g2} If $E$ is a topological principal $G$-bundle over $Q$, then  it has a holomorphic  structure.
\item[(G3)]\label{g3} If $E$ and $E'$ are holomorphic principal $G$-bundles which are continuously isomorphic, then they are holomorphically isomorphic.
\end{enumerate}
In fact, more is true.
\begin{enumerate}
\addtocounter{enumi}{3}
\item[(G4)]\label{g4} If $\Psi\colon E\to E'$ is a continuous isomorphism of holomorphic principal $G$-bundles over $Q$, then there is a homotopy $\Psi_t$ of continuous isomorphisms of principal $G$-bundles with $\Psi_0=\Psi$ and $\Psi_1$ holomorphic.
\end{enumerate}

Returning to our more general case where $X$ and $Y$ are not necessarily principal $G$-bundles, we want to find some analogue of the sheaf of groups $\CC$ for which we can prove analogues of the results above.  Most of our work has been concentrated on proving the analogue of (G4).  
 
Our problem here is more complicated than in Grauert's case since $X\to Q$ and $Y\to Q$ are only $G$-fibre bundles over the strata of $Q$ and, moreover, the fibre of each stratum $S$ is not usually a group or even a  homogeneous space.  
 
Let $U$ be open in $Q$.  Then we have the $G$-finite functions $\O_\gf(X_U)$ on $X_U$, which are just the elements $f$ of $\O(X_U)$ such that $\{f\circ g\inv\mid g\in G\}$ spans a finite-dimensional $G$-module.  On a fibre $X_q$, the $G$-finite functions $\O_\gf(X_q)$ are a finitely generated  complex algebra. In fact, there is a finite dimensional $G$-submodule  $V\subset \O_\gf(X_q)$ which generates $\O_\gf(X_q)$.  It follows that  a $G$-biholomorphism $X\to Y$, inducing the identity on $Q$, induces algebraic $G$-isomorphisms of the fibres $X_q$ and $Y_q$, $q\in Q$. Our analogues of continuous isomorphisms of principal $G$-bundles are $G$-diffeomorphisms or $G$-homeomorphisms which behave reasonably on $G$-finite functions or on  fibres.
 
For $q\in Q$, let $(X_q)_\red$ denote the reduced structure on $X_q$.

\begin{definition}
Let $\Psi\colon X\to Y$ be a $G$-diffeomorphism inducing the identity on $Q$. We say that $\Psi$ is \emph{strict\/} if for all $q\in Q$, the induced map $\Psi\colon X_q\to Y_q$ induces an (algebraic) isomorphism of $(X_q)_\red$ and $(Y_q)_\red$.
\end{definition}

The definition is from \cite{KLS-2017}; in \cite{KLS-2015} we required isomorphisms of $X_q$ and $Y_q$ which turns out not to be necessary. Here is part (a) of Theorem \ref{mainthm:isomorphisms1}.

\begin{theorem}\label{thm:main.strict} \cite[Theorem 1.4(1)]{KLS-2017}
Let $X$ and $Y$ be Stein $G$-manifolds locally $G$-biholomorphic over  $Q$. Let $\Psi\colon X\to Y$ be a strict $G$-diffeomorphism. Then there is a continuous deformation of $\Psi$, through strict $G$-diffeomorphisms, to a $G$-biholomorphism.
\end{theorem}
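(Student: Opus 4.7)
The plan is to mimic the Grauert-style proof of the Oka principle for non-abelian sheaves of groups, but carried out for the sheaf $\A$ of $G$-equivariant holomorphic automorphisms of $X$ inducing the identity on $Q$.  Fix a Stein open cover $(U_i)$ of $Q$ together with $G$-biholomorphisms $\Psi_i\colon X_{U_i}\to Y_{U_i}$ witnessing the local $G$-biholomorphicity, and set $\sigma_i=\Psi_i^{-1}\circ\Psi|_{X_{U_i}}$.  Each $\sigma_i$ is a strict $G$-self-diffeomorphism of $X_{U_i}$ over $U_i$, and on overlaps $\sigma_i\circ\sigma_j^{-1}$ equals the holomorphic element $\Psi_i^{-1}\circ\Psi_j\in\A(U_i\cap U_j)$ from the cocycle representing the class $c_Y\in H^1(Q,\A)$ of Theorem~\ref{thm:cocycles}.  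Thus $\Psi$ is a $0$-cochain in a sheaf $\D$ of strict $G$-self-diffeomorphisms whose coboundary is already $\A$-valued, and the goal becomes to deform $(\sigma_i)$ through $\D$-valued $0$-cochains with the same $\A$-coboundary to an $\A$-valued one; regluing with $(\Psi_i)$ then produces the desired $G$-biholomorphism homotopic to $\Psi$.

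I would carry out this deformation by a standard Grauert induction along a normal Stein exhaustion $K_1\Subset K_2\Subset\cdots$ of $Q$, shrinking the cover as needed.  At each step the key ingredient is an equivariant Cartan-type splitting lemma for $\A$: on a Cartan pair in $Q$, an element of $\A$ close to the identity should factor as a product of elements of $\A$ over each of the two pieces.  Linearising, the Lie algebra of $\A$ consists of $G$-invariant holomorphic vector fields on $X$ that are tangent to the fibres of $\pi$.  Decomposing into $G$-isotypic components and using the $G$-finite structure on fibres, one recognises this Lie algebra as the space of sections of a coherent analytic sheaf on $Q$, so Cartan's Theorems~A and~B supply the required additive splitting.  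Exponentiating via the flows of these vector fields and iterating in the usual Grauert manner produces the desired homotopy of $(\sigma_i)$, and hence of $\Psi$.

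Throughout the induction, one must verify that the intermediate maps remain \emph{strict}.  Here Luna's slice theorem and its analytic version provide the crucial local picture: near a closed orbit $Gx$, the manifold $X$ is $G$-biholomorphic to $G\times_{G_x}V$ for the slice representation $V$, and strictness for a $G$-self-diffeomorphism over $Q$ is an open condition because a smooth $G$-map between reduced fibres sufficiently close to a $G$-biholomorphism is again a $G$-biholomorphism of the reduced fibres.  Since the vector fields produced by the splitting are tangent to the fibres of $\pi$, their flows preserve each $X_q$, so composing $\sigma_i$ with such a flow keeps the perturbation inside $\D$, and the homotopy does not leak out of the strict class.

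The principal obstacle will be the Cartan splitting lemma for $\A$.  Although $\A(U)$ is infinite-dimensional and highly nonlinear, the reduction to a coherent sheaf on $Q$ hinges on a careful analysis of the $G$-finite holomorphic functions on the fibres $X_q$, on Heinzner's analytic slice theorem controlling the Stein structure of $X\to X\git G$, and on showing that the resulting additive splittings are sufficiently uniform along the Stein exhaustion of $Q$ to iterate and converge to a global deformation.  Making this precise, and in particular controlling strictness simultaneously with analytic approximation, is the technical heart of the proof.
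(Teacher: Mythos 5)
There is a genuine gap, and it sits exactly where you locate the ``technical heart.'' Your plan runs a single Grauert induction directly on the sheaf $\D$ of strict $G$-self-diffeomorphisms, with a Cartan-type splitting lemma for the holomorphic sheaf $\A$ as the engine. But a splitting lemma for $\A$ only handles holomorphic-to-holomorphic regluing; it does not supply the step the induction actually needs, namely the \emph{local} deformation of a strict $G$-diffeomorphism $\sigma_i$ of $X_{U_i}$ to a $G$-biholomorphism. In Grauert's classical setting this local step is free (a continuous map of a small set into a Lie group is null-homotopic), but here it is the hard part: a strict $G$-diffeomorphism is only smooth transverse to the fibres, the fibres are singular affine $G$-varieties whose type jumps across Luna strata, and nothing in your argument produces a path from $\sigma_i$ into $\A(U_i)$. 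Relatedly, the sheaf $\D$ has no exponential/logarithm tying it to a Lie algebra of sections of a coherent sheaf on $Q$, so even if Cartan's Theorems A and B give you an additive splitting of holomorphic fibre-tangent vector fields, you cannot exponentiate that splitting \emph{inside the strict class} to carry out the iteration; the ``Lie algebra of $\D$'' would consist of merely smooth fibre-tangent fields, which is not the object your coherence argument controls.

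The paper resolves exactly this by interposing a third sheaf between $\A$ and the strict (or strong) maps: the $G$-diffeomorphisms of type $\F$, which by definition admit local holomorphic extensions $\Psi(x,y)$ (holomorphic in $y$, smooth in $x$), with Lie algebra $\LF$ consisting of fields $\sum a_iA_i$ with $a_i\in\ci(X_U)^G$ and $A_i\in\Der_Q(X_U)$. The proof is then two steps: first, a non-Oka deformation of the given strict $G$-diffeomorphism to one of type $\F$, done by induction over the Luna strata of $Q$ (Theorems 8.7--8.8 of \cite{KLS-2017}) --- this is the local/stratified analysis your proposal omits; second, the Cartan--Grauert machine applied to the pair $(\A,\F)$, which works precisely because $\LF(U)$ is a Fr\'echet space of complete fields, $\exp$ maps $\LF(U)$ into $\F(U)$, and local logarithms exist (the listed properties of $\F$ and $\LF$). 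Your cocycle setup and your use of Luna's slice theorem to control strictness are consistent with this, but without the intermediate class $\F$ (or an equivalent device) the induction you describe cannot get off the ground.
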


Now we define \emph{strong homeomorphisms} of $X$ and $Y$ (see \cite[Section 3]{KLS-2017}  for  details). They are $G$-homeomorphisms of $X$ and $Y$, inducing the identity on $Q$, which behave well with respect to $G$-finite functions (and induce isomorphisms of the fibres $X_q$ and $Y_q$, $q\in Q$).  We start with a $G$-homeomorphism  $\Psi\colon X\to Y$ which induces the identity on $Q$.  Let $V$ be a $G$-module.  For a Stein neighbourhood $U$ of $q\in Q$, let   $\O(X_U)_V$ denote the span of the $G$-submodules of $\O_\gf(X_U)$ which are isomorphic to $V$.  For $U$ sufficiently small, $\O(X_U)_V$ is a finitely-generated $\O(U)$-module, say with generators $f_1,\dots,f_m$.  By judiciously choosing $U$ and $V$, we can assume that $\O_\gf(X_U)$ is generated by $\O(U)$ and $f_1,\dots,f_m$.  There are generators $f_1',\dots,f_m'$ of $\O(Y_U)_V$ which generate $\O_\gf(Y_U)$ as $\O(U)$-module.  We say that $\Psi$ is \emph{strong over $U$\/} if $\Psi^*f_i'=\sum a_{ij}f_j$, where the $a_{ij}$ are continuous functions on $U$ (considered as $G$-invariant functions on $X_U$).  We say that \emph{$\Psi$ is a strong $G$-homeomorphism\/} if it is strong over an open cover of $Q$.  It is not completely obvious, but the inverse of a strong $G$-homeomorphism is strong.  Here is part (b) of Theorem \ref{mainthm:isomorphisms1}.

\begin{theorem}\label{thm:main.strong} \cite[Theorem 1.4(2)]{KLS-2017}
Let $X$ and $Y$ be Stein $G$-manifolds locally $G$-biholomorphic over  $Q$. Let $\Psi\colon X\to Y$ be a strong $G$-homeomorphism.  
There is a continuous deformation of $\Psi$, through strong $G$-homeomorphisms, to a $G$-biholomorphism.
\end{theorem}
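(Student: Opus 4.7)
The plan is to mirror the proof of Theorem \ref{thm:main.strict}, substituting for the smooth input the weaker structure that the strong condition provides. Using the local $G$-biholomorphism hypothesis, fix a Stein open cover $(U_i)$ of $Q$ and $G$-biholomorphisms $\Psi_i\colon X_{U_i}\to Y_{U_i}$ inducing the identity on $U_i$. Set $\Phi_i:=\Psi_i\inv\circ\Psi|_{X_{U_i}}$; each $\Phi_i$ is a strong $G$-self-homeomorphism of $X_{U_i}$ over $U_i$, and on overlaps the $\Phi_i$ differ by the holomorphic cocycle $\Psi_i\inv\circ\Psi_j\in\A(U_i\cap U_j)$. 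The theorem thus reduces to the assertion that, through strong $G$-self-homeomorphisms compatible with this fixed holomorphic cocycle, the collection $(\Phi_i)$ can be deformed to a holomorphic one, i.e., to a section of $\A$.

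The next step is to convert the local problem into a parametric Oka problem with holomorphic linear constraints. Choose $U$ small enough, and a single $G$-module $V$ large enough, that a finite generating set $f_1,\dots,f_m$ of $\O(X_U)_V$ also generates $\O_\gf(X_U)$ as an $\O(U)$-module. A strong self-homeomorphism $\Phi$ of $X_U$ inducing the identity on $U$ is then encoded by a continuous matrix $A=(a_{ij})$ on $U$ with $\Phi^*f_j=\sum_i a_{ij}f_i$, subject to the holomorphic pointwise algebraic relations that the $f_i$ satisfy over $\O(U)$; and $\Phi$ is holomorphic exactly when $A$ is holomorphic. The local task becomes: deform a continuous section of the sheaf of admissible matrices to a holomorphic one, keeping the induced cocycle fixed. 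Runge-type approximation on Stein compacts in $U$, together with the fact that $A$ lies near the identity block pointwise and so can be homotoped fibrewise inside the variety of admissible matrices, supplies the local approximation.

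With the local approximation in hand, carry out the standard Cartan--Grauert induction: exhaust $Q$ by a nested sequence of pairs of Stein compacts and at each stage approximate on one compact while patching to the previous approximation on the other via a non-abelian H-Cartan lemma for $\A$. This lemma, which also underlies Theorem \ref{thm:main.strict}, is proved by linearising in the Lie algebra of $G$-equivariant holomorphic vector fields on $X_U$ and solving a Cousin-I problem with $\O(U)$-module coefficients; being purely holomorphic, it applies here verbatim. The main obstacle is ensuring that every map in the homotopy remains a strong $G$-homeo\-morphism, not merely a continuous $G$-homeomorphism. This is handled by performing all approximations and patchings directly at the level of the coefficient matrices $(a_{ij})$, so that the strong condition -- precisely the existence of continuous matrices realising $\Phi^*$ on the chosen generators -- is preserved throughout the deformation, with holomorphicity of the matrix entries, and hence of the map itself, achieved at the endpoint.
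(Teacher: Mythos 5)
There is a genuine gap: you have skipped the intermediate step that carries most of the weight in the paper's argument. The paper does not run the Grauert--Cartan machine on the sheaf $\A_c$ of strong $G$-homeomorphisms (or on any matrix encoding of it). Instead it first deforms $\Psi$, through strong $G$-homeomorphisms, to a $G$-diffeomorphism of type $\F$ (Theorem \ref{thm:deform.F}, proved by a non-Oka induction over the Luna strata of $Q$), and only then applies the Oka--Grauert scheme to the pair of sheaves $\A\subset\F$. The reason this detour is necessary is exactly the set of properties listed after Theorem \ref{thm:main.typeF}: Cartan's method needs the sheaf of groups to be closed in an ambient topological group, to have a Lie-algebra sheaf ($\LF$) of complete vector fields forming a Fr\'echet space, and to admit continuous $\exp$ and $\log$ near the identity. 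None of this is available, or established by you, for your ``sheaf of admissible matrices.'' In particular, your local step asserts that $A$ ``lies near the identity block pointwise and so can be homotoped fibrewise inside the variety of admissible matrices''; there is no reason the matrix of a given strong homeomorphism is near the identity, and the set of matrices that actually arise from $G$-homeomorphisms (as opposed to arbitrary $\O(U)$-linear endomorphisms of $\O(X_U)_V$ respecting the relations) is cut out by the condition that the induced endomorphism of each fibre algebra $\O_\gf(X_q)$ be an automorphism and that these assemble to a homeomorphism of $X_U$ -- a nonlinear, non-open condition whose homotopical structure (connectivity near the identity, invertibility along a deformation) is precisely the hard point. Your final sentence, that performing approximation ``at the level of the coefficient matrices'' preserves the strong condition, preserves only the existence of continuous coefficients; it does not show that the intermediate matrices define $G$-homeomorphisms at all.

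Concretely: the patching and approximation steps of the Cartan induction require writing an element of your group sheaf close to the identity as $\exp$ of something in a linear (Fr\'echet) space and splitting additively there. For $\A$ and for $\F$ the paper supplies this via $\LF$ and properties (1)--(4). For strong homeomorphisms encoded by continuous matrices you would have to prove analogous statements from scratch, and it is not clear they hold; this is why \cite{KLS-2017} introduces type $\F$ as a buffer class satisfying $\A\subset\F\subset\A_c$, with the passage $\A_c\rightsquigarrow\F$ handled by a separate stratum-by-stratum argument and the passage $\F\rightsquigarrow\A$ by the Oka machinery. Your proposal would need to either reconstruct something equivalent to $\F$ or prove the missing topological-group-sheaf properties for the matrix encoding directly.
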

 
For $U$ open in $Q$, let $\A_s(U)$ denote the strict $G$-diffeomorphisms of $X_U$ and let $\A_c(U)$ denote the strong $G$-homeomorphisms of $X_U$, inducing the identity on $U$ in both cases.  Note that Theorems \ref{thm:main.strict} and \ref{thm:main.strong} are versions of Grauert's (G4) above, where $\CC$ is replaced by the sheaves of groups $\A_s$ and $\A_c$.  In \cite{Schwarz-2018} we actually show that (G1) holds  for $\A$ and $\A_c$. 

In \cite{KLS-2015}, we proved the analogues of (G3) for $\A_s$ and $\A_c$, under the assumption that the action of $G$ on $X$ (equivalently, on $Y$) is \emph{generic\/} (see below).  It is useful here to sketch the idea of the proof.  We say that a $G$-diffeomorphism $\Phi$ of $X$ is \emph{special\/} if it is of the form $x\mapsto\gamma(x)\cdot x$, where $\gamma\colon X\to G$ is smooth and $G$-equivariant, where the $G$-action on $G$ is by conjugation.  If $\Phi$ is holomorphic, then it is special with $\gamma$ holomorphic \cite[Lemma 6]{KLS-2015}.  Finally, if $X$ and $Y$ are locally $G$-biholomorphic over $Q$ and $(U_i)$ is an open cover of $Q$ with $G$-biholomorphisms $\Phi_i\colon X_{U_i}\to Y_{U_i}$ over $U_i$, then we say that $\Psi\colon X\to Y$ is special if $\Phi_i\inv\circ\Psi\colon X_{U_i}\to X_{U_i}$ is special for all $i$.
 
\begin{theorem}
Let $\Psi\colon X\to Y$ be a strict $G$-diffeomorphism or strong $G$-homeo\-morphism, where $X$ is generic.  Then $\Psi$ is homotopic, through $G$-isomorphisms of the same type, to a special $G$-diffeomorphism of $X$ and $Y$.
\end{theorem}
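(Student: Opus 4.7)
The plan is to construct a smooth (respectively continuous) $G$-equivariant map $\gamma\colon X\to G$, with $G$ acting on itself by conjugation, such that $\Psi$ is homotopic through strict $G$-diffeomorphisms (respectively strong $G$-homeomorphisms) to the special map $\Psi^{\mathrm{sp}}$ defined locally by $x\mapsto\Phi_i(\gamma(x)\cdot x)$ on each $X_{U_i}$. I would proceed in three stages: produce $\gamma$ on the principal Luna stratum, extend across the smaller strata, and interpolate.

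On the principal stratum $Q^*\subset Q$, with $X^*=\pi_X\inv(Q^*)$, the $G$-action has constant principal isotropy $H$ and all orbits are closed. Since $\Phi_i\inv\circ\Psi$ induces the identity on $U_i$, it preserves each principal orbit, and one writes $\Phi_i\inv\circ\Psi(x)=\gamma_i(x)\cdot x$ with $\gamma_i(x)\in G$ well-defined modulo the stabiliser $G_x$. This yields a section of an associated $H$-bundle over $X^*_{U_i}$, and the genericity hypothesis on the $G$-action (which constrains $H$ and its slice representation) is precisely what is needed to lift this section to a smooth (respectively continuous) single-valued conjugation-equivariant map $\gamma_i\colon X^*_{U_i}\to G$, possibly after a preliminary homotopy of $\Psi$ to absorb a cohomological obstruction. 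To extend $\gamma_i$ across a non-principal Luna stratum, I would use Luna's slice theorem near each closed orbit $G\cdot x$ with $G_x\supsetneq H$ to obtain a local model $G\times^{G_x}S$, and read off from the strict (respectively strong) condition on $\Psi$ the precise boundary behaviour of $\gamma_i$ as the stratum is approached. Since the Luna stratification is locally finite, iterating this extension produces $\gamma_i$ on all of $X_{U_i}$; the resulting local maps $\Psi^{\mathrm{sp}}$ agree globally because they coincide with $\Psi$ on the dense stratum $X^*$.

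The homotopy from $\Psi$ to $\Psi^{\mathrm{sp}}$ is then obtained by interpolating the correcting conjugation-equivariant map $X\to G$ inside $G$, for instance along a path of exponentials when this makes sense and more carefully via a partition of unity over the stratification otherwise; the intermediate maps are automatically strict (respectively strong) because they remain of the special form. The main obstacle is the extension step in the strong $G$-homeomorphism case: one must arrange the extension of $\gamma_i$ across non-principal strata so that the defining relations $\Psi^*f_i'=\sum a_{ij}f_j$ with continuous $a_{ij}$ persist throughout the deformation. This is where the genericity hypothesis carries most of the analytic weight, providing uniform control on slice representations at non-principal closed orbits and hence on the $\O(U)$-module structure of the $G$-finite functions $\O_\gf(X_U)_V$ near those orbits.
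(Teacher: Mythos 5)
There is a genuine gap, and it sits exactly where the theorem has its content. On the principal part $X_\pr$, genericity gives TPIG, i.e.\ \emph{trivial} principal isotropy, so the map $\gamma_i$ with $\Phi_i\inv\circ\Psi(x)=\gamma_i(x)\cdot x$ is already single-valued, smooth (resp.\ continuous) and conjugation-equivariant there; there is no associated $H$-bundle to deal with and no cohomological obstruction on that stratum, contrary to what you suggest. The real difficulty, which your proposal assumes away, is that this $\gamma_i$ in general has \emph{no} continuous extension across the non-principal fibres: $X_{U_i}\setminus X_\pr$ has codimension at least $2$ but that does not force a $G$-valued map to extend, and the strict/strong hypotheses constrain only the restriction of $\Psi$ to each fibre $X_q$, not the limiting behaviour of $\gamma_i$ on nearby principal fibres. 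Your argument is in fact internally inconsistent: if $\gamma_i$ did extend to all of $X_{U_i}$, then $\Phi_i\inv\circ\Psi$ would already be special, and since your $\Psi^{\mathrm{sp}}$ agrees with $\Psi$ on the dense set $X_\pr$ it would equal $\Psi$ everywhere; the theorem would then assert that every strict $G$-diffeomorphism \emph{is} special, with no homotopy needed, which is false and is precisely why the statement is a deformation result rather than an identity.

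The paper's proof instead proceeds by induction over the Luna strata of $Q$, genuinely \emph{deforming} $\Psi$ on neighbourhoods of the lower strata: working in a slice model $G\times^{G_x}S$ near a non-principal closed orbit, one performs a local homotopy through strict (resp.\ strong) maps after which $\Phi_i\inv\circ\Psi$ becomes special near the given fibre, and local finiteness of the stratification allows one to exhaust $Q$. Your interpolation step also does not survive scrutiny as written: the path in $G$ from $\gamma_i(x)$ to $\gamma_i'(x)$ is only defined where $\gamma_i$ is, namely on $X_\pr$, and you never verify that the intermediate maps $x\mapsto\Phi_i(\gamma_t(x)\cdot x)$ are bijective, hence $G$-diffeomorphisms (maps of this form need not be injective). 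To repair the proposal you would have to replace ``extend $\gamma_i$ across the lower strata'' by ``deform $\Psi$ near each non-principal stratum so that a new $\gamma_i$ exists there'', which is exactly the inductive local-deformation argument of \cite{KLS-2015}.
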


The proof of this theorem is by induction over the strata of $Q$ using local deformations of $\Psi$ to special $G$-diffeomorphisms.  Once we only need to deal with $\Psi$ special, we can use a (somewhat complicated) bundle construction to reduce proving (G3) to the equivariant Oka-Grauert principle of Heinzner-Kutzschebauch \cite{Heinzner-Kutzschebauch}.  So the plan was to deform our given isomorphism  of $X$ and $Y$ to a \lq\lq nicer\rq\rq\ one (not using an Oka principle) and then to  apply an Oka principle to deform the nice isomorphism to one that is holomorphic.

The same recipe is followed in \cite{KLS-2017}, where we define the \emph{$G$-diffeomorphisms of type $\F$\/} of $X$ and $Y$ (see below).  Let $\F$ also denote the corresponding sheaf of $G$-automorphisms of $X$ over $Q$. Then $\A\subset\F\subset\A_c$ and $\A\subset \F\subset\A_s$ and we show, as in \cite{KLS-2015}, the following.

\begin{theorem}\label{thm:deform.F}  \cite[Theorems 8.7, 8.8]{KLS-2017}
Let $\Psi\colon X\to Y$ be a strict $G$-diffeo\-morphism or strong $G$-homeomorphism. Then $\Psi$ is homotopic, through maps of the same type, to a $G$-diffeomorphism of type $\F$.
\end{theorem}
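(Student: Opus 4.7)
The plan is to imitate the inductive strategy used in \cite{KLS-2015} for the analogous deformation to a special $G$-diffeomorphism, with the more flexible notion of type $\F$ replacing that of special. The key feature of $\F$ that makes this possible is that, unlike the sheaf of special $G$-diffeomorphisms $x\mapsto\gamma(x)\cdot x$, it is broad enough to accommodate the fibrewise automorphisms produced by non-generic slice representations, so the genericity hypothesis used in \cite{KLS-2015} can be dropped.

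The induction proceeds over the Luna strata of $Q$, ordered by increasing dimension. Suppose inductively that a deformation through strict $G$-diffeomorphisms (respectively strong $G$-homeomorphisms) has already been performed so that the resulting map, still denoted $\Psi$, is of type $\F$ on an open neighbourhood $W$ of the closure of the union of the strata of dimension less than $d$. Let $S$ be a stratum of dimension $d$. Luna's slice theorem, combined with the standing hypothesis that $X$ and $Y$ are locally $G$-biholomorphic over $Q$, provides a Stein cover $(U_i)$ of an open neighbourhood of $S$ together with $G$-biholomorphisms $\Phi_i\colon X_{U_i}\to Y_{U_i}$ inducing the identity on $U_i$. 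The task is to construct, for each $i$, a homotopy of $\Phi_i\inv\circ\Psi|_{X_{U_i}}$, through strict diffeomorphisms (resp. strong homeomorphisms), to an element of $\F(U_i)$, and to arrange that the homotopy is already of type $\F$ on $W\cap U_i$ so that the local homotopies can be glued.

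The local construction is carried out at the level of the $G$-finite function algebras: in the strict case $\Psi$ induces a family of algebra automorphisms of $(X_q)_\red$ depending smoothly on $q\in U_i$, while in the strong case it induces a family of continuous $\O(U_i)$-module automorphisms of the finitely generated modules $\O(X_{U_i})_V$. In either case, at a base point $q_0\in S\cap U_i$ the fibrewise map can be connected, inside a finite-dimensional group of $G$-equivariant fibrewise automorphisms coming from the slice representation at $q_0$, to one realised by an element of $\F$. This path is then extended over a neighbourhood of $q_0$ by a relative Cartan-type splitting that matches the family against the type-$\F$ representative already present on $W\cap U_i$. A smooth partition of unity on $Q$ in the time parameter, together with the fact that type $\F$ is closed under composition and small perturbation, then glues the local homotopies into one global deformation of $\Psi$.

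The main obstacle I expect is ensuring that the inductive step does not destroy the type-$\F$ property already achieved on the lower-dimensional skeleton while keeping every intermediate map strict (respectively strong). The strict and strong conditions encode algebraic data on the fibres $(X_q)_\red$ and on the finitely generated modules $\O(X_{U_i})_V$, and this data varies with $q$ across the stratum boundary of $S$. Every interpolation on $U_i$ must therefore be consistent with that variation, not just at time $0$ and $1$ but throughout. This is precisely where the flexibility of $\F$, as developed in \cite{KLS-2017}, is indispensable: non-generic isotropies contribute fibrewise automorphisms that are not of the form $x\mapsto\gamma(x)\cdot x$ but still lie in $\F$, so the interpolation can remain inside $\F$ without collapsing on lower-dimensional strata. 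The remaining work is careful bookkeeping of composition, inversion, and cutoffs of type-$\F$ families, essentially as carried out in \cite{KLS-2017}.
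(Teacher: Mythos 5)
Your proposal follows the same strategy the survey attributes to \cite{KLS-2017} (and to its precursor \cite{KLS-2015}): induction over the Luna strata of $Q$, local deformations --- carried out in the charts $\Phi_i\colon X_{U_i}\to Y_{U_i}$ supplied by the standing hypothesis that $X$ and $Y$ are locally $G$-biholomorphic over $Q$ --- to maps admitting local holomorphic extensions, followed by gluing, with the added flexibility of type $\F$ over \lq\lq special\rq\rq\ maps being exactly what lets the genericity hypothesis of \cite{KLS-2015} be dropped. The only caveat is that the genuinely hard step, namely that the fibrewise algebraic data of a strict (resp.\ strong) isomorphism can be promoted, locally and coherently in $q$, to an element of $\F(U_i)$ while staying within the given class throughout the homotopy, is asserted rather than argued; but that is precisely the technical content of the cited Theorems 8.7 and 8.8, and the survey itself offers no further detail on it.
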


Now we are able to use the Oka-Grauert machine, as presented in \cite{Cartan} (see Section \ref{sec:sections}), to prove the following.

\begin{theorem}\label{thm:main.typeF}
Let $X$ and $Y$ be locally $G$-biholomorphic over $Q$.  
\begin{enumerate}
\item The inclusion $\A\hookrightarrow\F$ induces an isomorphism $H^1(Q,\A)\to H^1(Q,\F)$.
\item If $\Psi\colon X\to Y$ is a $G$-diffeomorphism  of type $\F$, then $\Psi$ is homotopic, through $G$-diffeomorphisms of type $\F$, to a $G$-biholomorphism.
\end{enumerate}
\end{theorem}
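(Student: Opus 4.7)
The plan is to invoke the Cartan–Serre style proof of Grauert's Oka principle, adapted to the non-abelian pair of sheaves $\A \subset \F$ on $Q$ and made equivariant via the Heinzner-Kutzschebauch theorem \cite{Heinzner-Kutzschebauch}. Both parts of the theorem will follow from one technical splitting lemma combined with a bumping induction over Stein opens in $Q$; part (2) is the parametric case of part (1) applied on $Q \times [0,1]$.

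First I would fix a locally finite cover of $Q$ by Stein opens $(U_i)$ fine enough that Luna's slice theorem trivializes $X_{U_i}$ as a standard twisted product $G \times^{H_i} V_i$, and such that $X_{U_i}$ and $Y_{U_i}$ are $G$-biholomorphic over $U_i$ via the given maps $\Phi_i$. On such a cover, both $\A$ and $\F$ admit concrete descriptions as sheaves of equivariant automorphisms of the local models, and by the very definition of type $\F$ (which is designed to fit into a bundle-theoretic framework), the sheaf $\F$ locally arises as the sheaf of sections of a fibration to which the Oka-Grauert principle of \cite{Heinzner-Kutzschebauch} applies.

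The key technical step is a non-abelian Cartan splitting lemma for $\F$: given a Cartan pair $(K_0, K_1)$ of Stein compacta in $Q$ and a section $\phi \in \F(K_0 \cap K_1)$ sufficiently close to the identity, one can write $\phi = \phi_1 \circ \phi_0^{-1}$ with $\phi_i \in \F(K_i)$, and moreover, if $\phi$ actually lies in $\A(K_0 \cap K_1)$, then the splitting can be taken inside $\A$. Granting this, Cartan's bumping induction over $(U_i)$ proves part (1): surjectivity of $H^1(Q,\A) \to H^1(Q,\F)$ is obtained by iteratively converting an $\F$-cocycle into an $\A$-cocycle, and injectivity by the same argument applied to a coboundary relation between two $\A$-cocycles that holds in $\F$. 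For part (2), I would run the same induction parametrically. The type-$\F$ diffeomorphism $\Psi$ is encoded, relative to the $\Phi_i$, by a tuple $\sigma_i = \Phi_i^{-1} \circ \Psi|_{X_{U_i}} \in \F(U_i)$ with a holomorphic cocycle relation, and deforming $\Psi$ through type-$\F$ maps to a $G$-biholomorphism is equivalent to deforming $(\sigma_i)$ inside $\F$ to a tuple in $\A$; this is the splitting lemma for the sheaf $\F_{[0,1]}$ on $Q \times [0,1]$ obtained by pulling $\F$ back along the first projection, with prescribed boundary values at $\{0\}$ and $\{1\}$.

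The main obstacle is the splitting lemma itself. The sheaves $\A$ and $\F$ are non-abelian sheaves of groups over a Luna-stratified base, so the Cartan splitting must be performed iteratively with uniform control, and the local model $G\times^{H_i} V_i$ varies nontrivially with the stratum containing the base point. The definition of type $\F$ is tailored precisely so that this splitting problem pulls back, on each Luna stratum of $Q$, to a problem about holomorphic principal bundles with a reductive group action, where \cite{Heinzner-Kutzschebauch} supplies the required holomorphic splitting; the remaining work is to organize these stratum-by-stratum splittings into a single splitting compatible across strata, using Theorem \ref{thm:deform.F} to guarantee that the isomorphism we start with is already of type $\F$.
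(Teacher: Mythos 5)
Your high-level architecture is the right one -- Cartan's Oka--Grauert machine, a splitting lemma for a Cartan pair, a bumping induction over Stein opens of $Q$, and a parametric run of the same induction for part (2) -- and this matches the paper's stated strategy of ``using the Oka--Grauert machine, as presented in \cite{Cartan}''. But the place where you locate the real work is wrong, and the mechanism you propose for the splitting lemma would not go through.

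The missing idea is the Lie algebra sheaf $\LF$ and the $\exp/\log$ linearisation. The paper does not prove the splitting lemma by reducing, stratum by stratum, to the Heinzner--Kutzschebauch Oka principle for principal bundles and then gluing across strata. That reduction is unavailable here: over non-principal strata the fibres $X_q$ are in general neither groups nor homogeneous spaces, so $\F$ is not locally the section sheaf of a group bundle to which \cite{Heinzner-Kutzschebauch} applies; and ``organizing the stratum-by-stratum splittings into a single splitting compatible across strata'' is exactly the difficulty that the type-$\F$ formalism is designed to circumvent, not a routine afterthought. (The route through \cite{Heinzner-Kutzschebauch} is what \cite{KLS-2015} does, via \emph{special} diffeomorphisms $x\mapsto\gamma(x)\cdot x$, and it is precisely why that earlier result needs the genericity hypothesis that Theorem \ref{thm:main.typeF} avoids.) What actually makes Cartan's splitting work for the non-abelian sheaf $\F$ is: (i) the sheaf $\LF$ of vector fields annihilating $\O(X_U)^G$ that are locally $\ci(X_U)^G$-combinations of holomorphic fibre-preserving fields, which is a Fr\'echet space of \emph{complete} fields; (ii) the facts that $\exp$ maps $\LF(U)$ into $\F(U)$ and that near the identity $\F$ admits a unique continuous logarithm in $\LF$ (after slightly shrinking the open set); and (iii) the local holomorphic extensions $\Psi(x,y)$, holomorphic in $y$, which allow Runge approximation and additive Cousin-type splitting to be performed in $\LF$ in the holomorphic variable. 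One splits additively in $\LF$ and exponentiates back to $\F$; your sketch never linearises the problem, so the ``non-abelian Cartan splitting lemma'' is asserted rather than reduced to anything provable. Relatedly, your formulation of the splitting lemma (split in $\F$, and split in $\A$ if the datum is in $\A$) is not the statement the induction needs: to convert an $\F$-cocycle into an $\A$-cocycle one needs the \emph{mixed} attaching lemma, with one factor holomorphic, together with a Runge-density statement for $\A$ in $\F$; neither appears in your outline.
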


Before defining $G$-diffeomorphisms of type $\F$, we define the corresponding Lie algebra of vector fields of type $\LF$.  For $U$ open in $Q$, let $\Der_Q^\infty(X_U)$ (resp.\ $\Der_Q(X_U))$ denote the smooth (resp.\ holomorphic) vector fields on $X_U$ which annihilate $\O(X_U)^G$.  A vector field $D\in\Der_Q^\infty(X)$ is of type $\LF$ if every $q\in Q$ has a neighbourhood $U$ such that  $D|_{X_U}=\sum a_iA_i$ where $a_i\in\ci(X_U)^G$ and $A_i\in\Der_Q(X_U)$ for all $i$.  We let $\LF$ denote the corresponding sheaf on $Q$.  If $D\in\LF(U)$ is $\sum  a_i(x)A_i(x)$ where $a_i\in\O(U)$ and $A_i\in\Der_Q(X_U)^G$, then  $D(x,x'):=\sum a_i(x)A_i(x')$ is a family of smooth vector fields which are holomorphic for fixed $x$ and $G$-invariant in $x'$.  Then we have the following similar definition.

\begin{definition}
 Let $\Phi\colon X\to X$ be a  $G$-diffeomorphism inducing the identity on $Q$.
We say that \emph{$\Phi$ is of type $\F$\/} if for every $q\in Q$ there is a neighbourhood $U$ of $q$ and a map $\Psi\colon  X_U\times X_U \to X$ such that:
\begin{enumerate}
\item For $x\in X_U$ fixed, $\Psi(x,y)$ is a $G$-equivariant biholomorphism $\{x\}\times X_U\to X$, inducing the identity on the quotient.
\item $\Psi$ is smooth in $x$ and $y$ and $G$-invariant in $y$.
\item $\Phi(x)=\Psi(x,x)$, $x\in X_U$.
\end{enumerate}
We call $\Psi$ a \emph{local holomorphic extension of $\Phi$.}  
\end{definition}

Note that if $\Phi$ is holomorphic, then it is of type $\F$ by setting $\Psi(x,y)=\Phi(y)$. The $G$-diffeomorphisms of type $\F$ are strict and  strong.
 
If one wants to prove Theorem \ref{thm:main.typeF} using the approach of \cite{Cartan}, one needs some very basic topological properties of the sheaves $\F$ and $\LF$. See \cite[\S 6]{KLS-2017}. We list a few  of them. Let $U$ be open and Stein in $Q$.

\begin{enumerate}
\item $\LF(U)$ consists of complete vector fields and is closed in the space of $\ci$ vector fields on $X_U$, hence is a Fr\'echet space.
\item $\F(U)$ is closed in $\Diff(X_U)^G$.
\item If $D\in\LF(U)$, then $\exp(D)\in \F(U)$.
\item Let $K\subset U'\subset U$, where $K$ is compact and $U'$ has compact closure in $U$.  Then there is a neighbourhood $\Omega$ of the identity in $\F(U)$ such that any $\Phi\in\F(U)$ admits a unique logarithm $D$ in $\LF(U')$. Moreover, $\log\colon\Omega\to\LF(U')$ is continuous.
\end{enumerate}
 
We now turn to the question of when $X$ and $Y$ are locally $G$-biholomorphic over $Q$.

Let $\Der(Q)$ denote the derivations of $\O(Q)$ which preserve the strata of $Q$. That is, if $f\in\O(Q)$ vanishes on the closure of a stratum $S$ of $Q$, then so does $P(f)$ for any $P\in\Der(Q)$.  If $P\in \Der(X)^G$ and $f\in\O(Q)\simeq\O(X)^G$, then $P(f)\in\O(X)^G\simeq\O(Q)$.  It is not difficult to see that the resulting derivation $(\pi_X)_*(P)$ of $\O(Q)$ lies in $\Der(Q)$.  We say that $\pi_X$ (or just $X$) has the \emph{infinitesimal lifting property\/} (abbreviated ILP), if $(\pi_X)_*$ maps onto $\Der(Q)$.

The ILP is a consequence of various more geometric conditions. Assuming that $Q$ is connected, there is a unique open and dense stratum $Q_\pr$, the principal stratum. Let $X_\pr$ denote $\pi_X\inv(Q_\pr)$.  We say that $X$ is \emph{$k$-principal\/} if $X\setminus X_\pr$ has codimension at least $k$ in $X$.  We say that the $G$-action is  \emph{stable\/} if $X_\pr$ consists of closed orbits.  If $X$ is stable and $k$-principal, $k\geq 2$, then one can reduce all our $G$-isomorphism problems to the case that $X_\pr$ consists of (closed) $G$-orbits with trivial stabilizer \cite[Proposition 3]{KLS-2015}, in which case we say that  \emph{$X$ has TPIG} (\lq\lq trivial principal isotropy groups\rq\rq).  Finally, if $X$ is $3$-principal with FPIG (\lq\lq finite principal isotropy groups\rq\rq), then $\pi_X$ has the ILP \cite[Theorem 8.9]{Schwarz-1995}.  We will see another condition implying the ILP below. Finally, $X$ is \emph{generic\/} if $X$ is $2$-principal with TPIG.

In \cite[\S 5]{KLS-2017} some local analysis establishes Theorem \ref{mainthm:isomorphisms2}. 

\begin{theorem}
Let $X$ and $Y$ have common quotient $Q$. Suppose that $\Psi\colon X\to Y$ is a strict diffeomorphism or $\Psi$ is a strong $G$-homeomorphism and $X$ has the ILP. Then $X$ and $Y$ are locally $G$-biholomorphic over $Q$.  Hence there is a homotopy $\Psi_t$ of strict or strong $G$-isomorphisms $\Psi_t$ with $\Psi_1$ biholomorphic.
\end{theorem}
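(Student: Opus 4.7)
The plan is to reduce the problem to producing, for each $q_0 \in Q$, a Stein neighbourhood $U \ni q_0$ and a $G$-biholomorphism $X_U \to Y_U$ inducing the identity on $U$. Once this local statement is established, $X$ and $Y$ are locally $G$-biholomorphic over $Q$ by definition, and the homotopy conclusion follows directly from Theorem \ref{thm:main.strict} or Theorem \ref{thm:main.strong}.

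First I would apply the holomorphic Luna slice theorem (Snow, Heinzner) at the unique closed $G$-orbit $G\cdot x_0 \subset X_{q_0}$, with stabiliser $H = G_{x_0}$. After shrinking $U$ to a Stein neighbourhood of $q_0$, this gives a $G$-equivariant biholomorphism $X_U \cong G\times_H S_X$, where $S_X$ is an $H$-stable Stein slice. Because $\Psi$ is strict or strong, the induced map on fibres $X_{q_0} \to Y_{q_0}$ is a $G$-isomorphism of reduced structures (strict case) or an isomorphism of $G$-finite function algebras (strong case); in either case closed orbits match, so the slice theorem applied at the closed orbit of $Y_{q_0}$ containing $\Psi(x_0)$ gives $Y_U \cong G\times_H S_Y$ with the same stabiliser $H$ after further shrinking $U$. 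The restriction of $\Psi$ to slices then yields an $H$-equivariant isomorphism of the reduced fibres of $S_X \to U$ and $S_Y \to U$ at $q_0$, which one can upgrade, using the $H$-module slice embeddings, to an $H$-biholomorphism of small $H$-invariant neighbourhoods of $0$ in the respective slice representations.

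Second, I would use the ILP to spread this fibrewise isomorphism over a neighbourhood of $q_0$. Any strata-preserving derivation of $\O(U)$ lifts, by the ILP, to a $G$-invariant holomorphic derivation of $X_U$, and the same holds on the $Y$-side after transporting the local model. Integrating such derivations produces local $G$-equivariant holomorphic one-parameter families of automorphisms of $X_U$ and $Y_U$ covering prescribed strata-preserving motions of $U$; coupling these flows with the fibrewise biholomorphism constructed above yields a $G$-equivariant holomorphic map $X_U \to Y_U$ inducing the identity on a (possibly smaller) Stein neighbourhood of $q_0$. On sufficiently small $U$, this map is a biholomorphism by the inverse function theorem applied fibrewise over the stratum of $q_0$ and then propagated by the equivariant structure.

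The main obstacle is the passage from the pointwise matching of fibres at $q_0$ to a holomorphic matching over $U$. Under the strict hypothesis one controls only the reduced fibre structure, while the local model carries nilpotent directions transverse to the closed orbit $G\cdot x_0$; under the strong hypothesis the match of $G$-finite function algebras is only continuous in $q$, and must be promoted to holomorphic. Both difficulties are precisely the ones resolved by the ILP: the lifting of tangential derivations on $Q$ to $G$-invariant holomorphic derivations on $X_U$ provides the flexibility needed to simultaneously align the scheme-theoretic structure of all fibres in a neighbourhood of $q_0$. Without the ILP, normal-bundle obstructions to such an extension need not vanish, and the local $G$-biholomorphism could fail to exist.
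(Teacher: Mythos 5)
Your overall architecture --- localise at $q_0$, invoke the holomorphic slice theorem to obtain local models $G\times_H N$, and use the ILP to repair the induced map on $U$ --- has the right shape, and the final reduction of the homotopy statement to Theorems \ref{thm:main.strict} and \ref{thm:main.strong} is correct (the survey itself only cites \cite[\S 5]{KLS-2017} for the local analysis). But two steps in the middle are asserted rather than proved, and the first would fail as stated. You claim that the isomorphism of \emph{reduced} fibres $(X_{q_0})_\red\to(Y_{q_0})_\red$ provided by a strict $\Psi$ can be ``upgraded'' to an isomorphism of the slice representations. It cannot in general: the reduced fibre may carry far less information than the slice type. If the stabiliser $H$ of the closed orbit is finite, the null cone of the slice module $N$ is $\{0\}$ and $(X_{q_0})_\red$ is just the orbit $G/H$, independently of $N$; matching reduced fibres then pins down only the conjugacy class of $H$, not $N$, and without $N_X\cong N_Y$ as $H$-modules you do not even have a common local model. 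The real argument must exploit that $\Psi$ is a global $G$-diffeomorphism --- its differential along the closed orbit is an $H$-equivariant $\R$-linear isomorphism of tangent spaces --- together with the identification of $\O(X_U)^G$ and $\O(Y_U)^G$ with $\O(U)$, and even then passing from real to complex equivalence of $H$-modules needs care. (In the strong case the matching of slice types is essentially built into the definition, since the non-reduced fibres are algebraically isomorphic; you should separate the two cases rather than treat them in parallel.)

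The second gap is the ``spreading'' step. Flows of ILP-lifts of strata-preserving vector fields on $U$ preserve the Luna strata of $X_U$, so they can only transport your fibrewise isomorphism within a single stratum; they give no control of holomorphicity in the directions transverse to the strata, which is exactly where the fibres degenerate and where the difficulty lies, and the map you would obtain depends a priori on the choice of path. The ILP enters differently: once $X_U$ and $Y_U$ are both identified with saturated neighbourhoods in the \emph{same} model $G\times_H N$, the two identifications embed $U$ into $N\git H$ in two ways differing by a germ of a strata-preserving automorphism $\tau$ at $q_0$; one writes $\tau$ as a time-one map of a time-dependent strata-preserving vector field, lifts that field by the ILP to a $G$-invariant holomorphic field on the model, and uses its flow to correct one identification so that the composite $X_U\to Y_U$ induces the identity on a smaller neighbourhood. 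Your proposal never isolates this discrepancy automorphism, and never uses the smoothness in $q$ of $\Psi$ (strict case) or the continuity of the coefficients $a_{ij}$ (strong case), which is what controls $\tau$ in the first place. As written, the argument does not go through.
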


In our earlier work \cite{KLS-2015}, we assume that $X$ and $Y$ are locally $G$-biholomorphic over the common quotient $Q$ and generic.  In that case we prove the following somewhat provocative result (which does not require $X$ and $Y$ to be smooth).  It relies on the equivariant Oka-principle of \cite{Heinzner-Kutzschebauch}.

\begin{theorem}
Let $G$ act holomorphically and generically on normal Stein spaces $X$ and $Y$ which are locally $G$-biholomorphic over a common quotient $Q$. Then the obstruction to $X$ and $Y$ being $G$-equivariantly biholomorphic is purely topological. Namely, there is a bundle arising from the data whose topological triviality is equivalent to $X$ and $Y$ being $G$-biholomorphic.
\end{theorem}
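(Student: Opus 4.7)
The plan is to translate the $G$-biholomorphism question into a cohomological one via Theorem \ref{thm:cocycles}, deform the representing cocycle into a form that assembles into a single auxiliary principal bundle, and then identify the holomorphic triviality of that bundle with its topological triviality using the equivariant Oka-Grauert principle of Heinzner and Kutzschebauch \cite{Heinzner-Kutzschebauch}.

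By Theorem \ref{thm:cocycles}, the Stein $G$-space $Y$ corresponds to a class $c_Y\in H^1(Q,\A)$ that vanishes if and only if $X$ and $Y$ are $G$-biholomorphic over $Q$. Because the action is generic ($2$-principal with TPIG), the principal stratum $X_\pr\to Q_\pr$ is an honest holomorphic principal $G$-bundle, so over $Q_\pr$ the sheaf $\A$ is the sheaf of holomorphic maps to $G$ acting on itself by conjugation, and $c_Y|_{Q_\pr}$ already classifies a genuine holomorphic principal $G$-bundle. The strategy now, following \cite{KLS-2015}, is to push this identification across all of $Q$ by deforming the representing cocycle, by an inductive stratum-by-stratum argument, into \emph{special} form, that is, to one whose local components have the shape $x\mapsto \gamma(x)\cdot x$ for $G$-equivariant holomorphic maps $\gamma$ into $G$ (with conjugation action). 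These local modifications can be chosen to preserve the cohomology class, so the class $c_Y$ is always represented by such a special cocycle.

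Once the cocycle is in special form, it assembles into a single holomorphic principal $G$-bundle $E$ over $Q$ (or, more precisely, a holomorphic $G$-equivariant principal bundle with structure group to which the results of \cite{Heinzner-Kutzschebauch} apply), with the property that $E$ is holomorphically trivial if and only if $c_Y=0$. The equivariant Oka-Grauert principle of \cite{Heinzner-Kutzschebauch} then gives that, over a normal Stein base carrying a $G$-action, such a bundle is holomorphically trivial if and only if it is topologically trivial. Combining these two equivalences yields that $X$ and $Y$ are $G$-biholomorphic over $Q$ if and only if $E$ is topologically trivial, which is the topological obstruction asserted by the theorem.

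The main obstacle is the deformation of $c_Y$ into special form together with the subsequent bundle construction: both must be executed with care at the non-principal strata of $Q$, where the fibres of $\pi_X$ are not homogeneous spaces and $\A$ is not simply a sheaf of holomorphic maps into $G$. Genericity is precisely what makes this feasible: the $2$-principal hypothesis provides the codimension bound needed for local deformations on the complement of $X_\pr$ to extend across, and TPIG ensures that the structure group of the assembled bundle really is $G$. The normality hypothesis on $X$ and $Y$ is used to guarantee that the fibrewise algebraic structures cut out by the $G$-finite functions are well-behaved enough for the stratum-by-stratum reduction to globalise into the single principal bundle $E$.
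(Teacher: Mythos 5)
Your proposal follows essentially the same route the paper sketches for this result: pass to the cocycle $(\Phi_{ij})=(\Psi_i^{-1}\circ\Psi_j)$, put it in special form $x\mapsto\gamma_{ij}(x)\cdot x$ with $\gamma_{ij}$ equivariant for conjugation, assemble these data into a bundle, and identify holomorphic with topological triviality via the equivariant Oka--Grauert principle of \cite{Heinzner-Kutzschebauch}. Two minor points: since the $\Phi_{ij}$ are already holomorphic they are automatically special with holomorphic $\gamma_{ij}$ by \cite[Lemma 6]{KLS-2015}, so the stratum-by-stratum deformation you invoke is needed only for the strict/strong versions, not here; and the bundle one obtains is a $G$-equivariant principal bundle over the Stein space $X$ itself (not a bundle over $Q$), which is precisely the setting in which the Heinzner--Kutzschebauch principle applies.
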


\section{The linearisation problem}  
\label{sec:linearisation}

\noindent
Let the complex reductive group $G$ act holomorphically on $X=\C^n$.  We say that the $G$-action is \emph{holomorphically linearisable\/} if there is a $G$-biholomorphism $\Phi\colon X\to V$, where $V$ is an $n$-dimensional 
$G$-module.  Then $\Phi$ induces a strata preserving biholomorphism $\phi\colon X\git G\to V\git G$.  Thus linearisability implies that $X\git G$ is isomorphic to the quotient of a linear $G$-action. So we have the following special case of the problem we have considered above.

\emph{Let $X$ be a Stein $G$-manifold and $V$ a $G$-module with common quotient $Q$. When are $X$ and $V$ equivariantly biholomorphic?}

Let $X_{(n)}=\{x\in X: \dim G_x=n\}$.  We say that \emph{$X$ is large\/} if $X$ is generic and $\codim_X X_{(n)}\geq n+2$ for all $n\geq 1$. Largeness holds for ``most''  $X$ and $V$. See \cite[Remark 2.1]{KLS-2017-lin}. If $X\git G$ and $V\git G$ are strata preserving biholomorphic, then $X$ is large if and only if $V$ is large.

Below we need to distinguish between $X\git G$ and $Q=V\git G$ even though they are assumed to be stratified biholomorphic.
The following implies Theorem \ref{mainthm:linearisation}.

\begin{theorem}\label{thm:main.lin}
Let $X$ be a Stein $G$-manifold and $V$  a $G$-module with a strata preserving biholomorphism $\phi\colon X\git G\to Q=V\git G$. 
\begin{enumerate}
\item If $X$ and $V$ are locally $G$-biholomorphic over $Q$, then they are $G$-biholomorphic \cite[Theorem 1.1]{KLS-2017-lin}.
\item If $V$ is large, then by perhaps changing $\phi$ by an element of $\Aut(Q)$, one can arrange that $X$ and $V$ are locally $G$-biholomorphic over $Q$, hence $G$-biholomorphic \cite[Theorem 1.4]{KLS-2017-lin}.
\end{enumerate}
\end{theorem}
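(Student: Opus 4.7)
I would treat the two parts separately, reducing each to the machinery of Section \ref{sec:isomorphisms}.

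For part (1), the classification of Theorem \ref{thm:cocycles} shows that isomorphism classes of Stein $G$-manifolds locally $G$-biholomorphic to $V$ over $Q$ correspond to $H^1(Q,\A)$, and by Theorem \ref{thm:main.typeF}(1) this equals $H^1(Q,\F)$. The strategy is to construct a global $G$-diffeomorphism $\Phi\colon X\to V$ of type $\F$ and then invoke Theorem \ref{thm:main.typeF}(2) to deform it to a $G$-biholomorphism. Given the local biholomorphisms $\Psi_i\colon X_{U_i}\to V_{U_i}$ and a smooth $G$-invariant partition of unity $\{\rho_i\}$ on $Q$ subordinate to $(U_i)$, I would exploit the linear structure of $V$ to patch. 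Concretely, working in Luna slice charts on $V$ (where $\pi_V$ is locally a projection, so convex combinations respect fibres), the formula
\[
\Psi(x,y) \;=\; \sum_i \rho_i(\pi_X(x))\,\Psi_i(y)
\]
defines a smooth-in-$x$, holomorphic-in-$y$, $G$-invariant-in-$\pi_X(y)$ family whose diagonal $\Phi(x)=\Psi(x,x)$ is a smooth $G$-equivariant map $X\to V$ inducing the identity on $Q$, with $\Psi$ itself serving as a local holomorphic extension witnessing $\Phi\in\F(Q)$. After checking non-degeneracy of the fibrewise Jacobian (using closeness of $\Phi$ to each $\Psi_i$ on the support of $\rho_i$), Theorem \ref{thm:main.typeF}(2) supplies the required $G$-biholomorphism.

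For part (2), one starts from only a stratified biholomorphism $\phi\colon X\git G\to V\git G$, not local biholomorphisms over $Q$. Luna's slice theorem applied to both $X$ and $V$ provides, for every $q\in Q$, a Stein neighbourhood $U$ and a $G$-biholomorphism $\tilde\Psi_U\colon X_U\to V_U$, but the induced map $\psi_U\colon U\to U$ is a priori only a strata-preserving biholomorphism, not the identity. The plan is to show that under the largeness of $V$ the germs $\{\psi_U\}$ assemble into a single global $\chi\in\Aut(Q)$; replacing $\phi$ by $\chi\circ\phi$ and suitably adjusting the $\tilde\Psi_U$ by liftings of $\chi\inv$ to the fibres of $\pi_V$ then yields local biholomorphisms over $Q$, bringing us back to part (1). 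Largeness is the key lever: the codimension conditions $\codim_V V_{(n)}\geq n+2$ constrain the sheaf of strata-preserving local automorphisms of $Q$, giving the rigidity needed to turn the locally defined $\psi_U$ into a coherent global automorphism, and the freedom in the Luna slice identifications is enough to absorb any residual ambiguity.

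\textbf{Main obstacle.} The genuinely delicate step is the globalisation of the germs $\psi_U$ to a single $\chi\in\Aut(Q)$ in part (2); this requires a careful analysis of how strata-preserving automorphisms of $Q$ are determined by data on the principal stratum and how they lift to the Luna slice models of $V$, and it is precisely here that the largeness hypothesis — through its codimension bound — provides the rigidity to force coherence. Part (1), by contrast, is a fairly mechanical application of the Section \ref{sec:isomorphisms} machinery, with the only subtlety being that naive averaging in $V$ does not preserve the (polynomial) fibres of $\pi_V$, so the partition-of-unity patching must be set up in Luna slice coordinates where the quotient map is locally a linear projection.
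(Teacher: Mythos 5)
There are genuine gaps in both parts, and in both cases your route diverges from the one the paper actually takes.

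For part (1), the partition-of-unity construction does not work. The fibres of $\pi_V$ are common level sets of the (generally non-linear) polynomial invariants $p_1,\dots,p_d$, so they are not affine subspaces of $V$ and the convex combination $\sum_i\rho_i(\pi_X(x))\,\Psi_i(y)$ leaves the fibre $V_{\pi_X(y)}$. Your proposed fix — passing to Luna slice charts "where $\pi_V$ is locally a projection" — is only valid over strata with trivial isotropy: the slice model is $G\times^H N\to N\git H$, whose quotient map is again given by non-linear $H$-invariants whenever $H$ is non-trivial. Even where averaging were fibre-preserving, there is no smallness available: the transition maps $\Psi_i\inv\circ\Psi_j\in\A(U_i\cap U_j)$ coming from Luna's theorem are arbitrary elements of $\A$, not close to the identity, so the averaged map has no reason to be injective or to have non-degenerate fibrewise differential. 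The paper's actual mechanism is entirely different (Theorem \ref{thm:main.lin2}(1)): one pushes the Euler vector field $E$ of $V$ down to $Q$, transports it to $\Der(X\git G)$ via $\phi\inv$, lifts it to a $G$-invariant holomorphic field $B$ on $X$ (local lifts exist by the local biholomorphisms, and they glue because the ambiguity lies in a coherent sheaf over the Stein space $X\git G$), and then uses the flows of $B$ and of $E$ (i.e.\ the contraction of $V$ to the origin) to propagate the local lift near $\phi\inv(0)$ to a global $G$-biholomorphism. The cocycle/type-$\F$ formalism of Section \ref{sec:isomorphisms} is not what carries this theorem.

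For part (2), the step you yourself flag as delicate — assembling the locally defined quotient maps $\psi_U$ into a single $\chi\in\Aut(Q)$ — has no mechanism behind it: the $\psi_U$ arising from different Luna slices need not agree on overlaps, and no cocycle condition is available, so "rigidity forces coherence" is not an argument. What largeness actually buys, per the Proposition preceding Theorem \ref{thm:main.lin2}, is two specific properties: the infinitesimal lifting property (every element of $\Der(Q)$ lifts to $\Der(X)^G$, resp.\ $\Der(V)^G$) and the deformation property (every $\theta\in\Aut(Q)$ fixing $0$ degenerates under $\theta_t=t\inv\circ\theta\circ t$ to a quasilinear automorphism $\theta_0$ as $t\to 0$). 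The proof of Theorem \ref{thm:main.lin2}(2) uses DP to replace the automorphism of $Q$ induced near the origin by a quasilinear one — which does lift to $V$ — and ILP to lift derivations and correct $\phi$ elsewhere; only after this modification of $\phi$ does one obtain local $G$-biholomorphisms over $Q$ and reduce to part (1). The codimension bound in the definition of largeness enters only through these two properties, not through a direct rigidity statement about germs of strata-preserving automorphisms.
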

 
The largeness of $V$ in (2)  is only important because it implies other properties of $V$.

Let $p_1,\dots,p_d$ be homogeneous generators of  $\C[V]^G$ of degrees $e_1,\dots,e_d$.  Let $t\in \C^*$ act on $\C^d$ by $(y_1,\dots,y_d)\mapsto (t^{e_1}y_1,\dots,t^{e_d}y_d)$.  The $\C^*$-action preserves $Q\simeq p(V)$, where $p=(p_1,\dots,p_d)\colon V\to \C^d$.  We say that $V$ has the \emph{deformation property (DP)\/} if for any $\theta\in\Aut(Q)$ fixing $0$, the limit $\theta_t=t\inv\circ\theta\circ t$  exists as $t\to 0$.  The limit $\theta_0$ is in $\Aut_\ql(Q)$, the set of \emph{quasilinear\/} elements of $\Aut(Q)$, that is, those that commute  with the $\C^*$-action.  We say that \emph{$V$ has the lifting property (LP)\/} if any $\theta\in\Aut(Q)$ has a holomorphic lift $\Theta\colon V\to V$.  The lift need not be $G$-equivariant, but it has to map $V_q$ to $V_{\theta(q)}$ for all $q\in Q$.

\begin{proposition} Suppose that $V$ is large. Then:
\begin{enumerate}
\item $V$ has the ILP \cite[Theorem 0.4]{Schwarz-1995}. In fact, any holomorphic differential operator on $Q$ lifts.
\item $V$ has the DP \cite[Theorem 2.2]{Schwarz-2014}. (The condition that $V$ be admissible in the cited theorem is implied by $V$ being large.)
\end{enumerate}
For any $V$, LP implies DP.
\end{proposition}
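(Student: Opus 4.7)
The plan is as follows. Items (1) and (2) are cited from \cite[Theorem 0.4]{Schwarz-1995} and \cite[Theorem 2.2]{Schwarz-2014}; one only needs to check that largeness implies the admissibility hypothesis in the cited statement, which is a direct comparison of codimension conditions. The substantive content is the final clause: \emph{for any $V$, LP implies DP}.

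I would fix $\theta\in\Aut(Q)$ with $\theta(0)=0$ and, using LP, pick a holomorphic lift $\Theta\colon V\to V$ with $p\circ\Theta=\theta\circ p$. Since $p$ intertwines scalar multiplication on $V$ with the weighted $\C^*$-action on $Q\subset\C^d$, conjugation by scalars produces, for each $t\in\C^*$, a holomorphic lift $\Theta_t:=t\inv\circ\Theta\circ t$ of $\theta_t$, and the goal is to extend $\theta_t$ holomorphically to $t=0$.

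Expanding $\Theta=\Theta_0+\Theta_1+\Theta_2+\cdots$ in homogeneous components at $0\in V$, the hypothesis $\theta(0)=0$ forces $\Theta_0=\Theta(0)\in p\inv(0)=\N(V)$. Polarizing $p_i$ to its associated symmetric $e_i$-linear form $P_i$ and substituting gives
\[
p_i(\Theta_t(v))=\sum_{K\geq 0}t^{K-e_i}\sum_{k_1+\cdots+k_{e_i}=K}P_i(\Theta_{k_1}(v),\ldots,\Theta_{k_{e_i}}(v)),
\]
so a holomorphic limit as $t\to 0$ exists precisely when the degree-$K$ homogeneous part of $p_i\circ\Theta$ vanishes for every $K<e_i$ and every $i$. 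Any tuple contributing to such a $K$ must contain some $k_j=0$, so every potentially divergent summand carries at least one factor $\Theta_0\in\N(V)$.

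The hard part will be to use LP to arrange the vanishing of these low-degree parts. I would proceed by upgrading the given lift to one with $\Theta(0)=0$: once that holds, every offending tuple contributes a factor $\Theta_0=0$, the sums collapse, and $\Theta_t(v)$ itself extends holomorphically to $t=0$ with limit the linear map $d\Theta|_0$, whose descent to $Q$ commutes with the $\C^*$-action and therefore lies in $\Aut_\ql(Q)$, giving DP. Producing this modified lift, by combining LP applied to auxiliary elements of $\Aut(Q)$ with the vector-space structure of $V$ so as to cancel the constant term of $\Theta$ in a $p$-compatible way, is where the real technical difficulty lies.
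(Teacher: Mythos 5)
The paper is a survey and offers no proof of this proposition: items (1) and (2) are delegated to the cited theorems of Schwarz exactly as you propose, so the only content to be checked is the final clause ``LP implies DP'', and there your argument has a genuine gap. Your set-up is correct: conjugating a lift by scalars, expanding $\Theta=\sum_k\Theta_k$ into homogeneous components, and observing that the only potentially divergent terms in $p_i\circ\Theta_t$ come from the homogeneous parts of $p_i\circ\Theta$ of degree $K<e_i$, each of which must carry a factor $\Theta_0=\Theta(0)\in\N(V)$. But the argument then stops exactly where it needs to begin. You reduce DP to the existence of a lift with $\Theta(0)=0$ and then concede that producing such a lift ``is where the real technical difficulty lies''. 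LP only guarantees \emph{some} lift, whose value at the origin is an arbitrary point of the null cone $V_0=p\inv(0)$, and the repair mechanism you gesture at --- cancelling the constant term ``by combining LP applied to auxiliary elements of $\Aut(Q)$ with the vector-space structure of $V$'' --- is not available as stated: replacing $\Theta$ by $\Theta-\Theta(0)$ destroys the relation $p\circ\Theta=\theta\circ p$, because $p$ is nonlinear and its fibres are not cosets, and composing with lifts of other automorphisms moves $\Theta(0)$ only within fibres of $p$, which does not obviously reach $0$. So the key implication is asserted rather than proved.

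Note also that the vanishing you need, namely $(p_i\circ\Theta)_K=0$ for $0<K<e_i$, is an intrinsic property of $\theta$ (these components descend to the quasi-homogeneous components of $\theta_i$ of weight $K<e_i$), so no choice of lift can \emph{create} it; a correct argument must extract it from the mere existence of a lift. Your write-up never uses the one extra piece of structure that the hypothesis $\theta(0)=0$ provides: $\Theta$ maps the cone $\N(V)=V_0$ into itself, so $p_i\circ\Theta$, and hence each of its homogeneous components, vanishes on $\N(V)$. Some such additional input is indispensable, and it is absent here. As it stands, the final clause of the proposition remains unproved; for the actual argument you should consult \cite{KLS-2017-lin}, to which the survey implicitly defers.
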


Here are some results of \cite{KLS-2017-lin}, which point out the inner workings of the proof of Theorem \ref{thm:main.lin}. 

\begin{theorem}\label{thm:main.lin2}
Let $X$ be a Stein $G$-manifold and $V$  a $G$-module with a strata preserving biholomorphism $\phi\colon X\git G\to Q=V\git G$. 
\begin{enumerate}
\item Let $E$ be the Euler vector field on $V$.  Then $(\pi_V)_*(E)\in\Der(Q)$ can be considered as an element of $\Der(X\git G)$ via $\phi\inv$. Suppose that it has a lift $B\in\Der(X)^G$ and that
 we have a $G$-biholomorphic lift $\Phi$ of $\phi$ over a neighbourhood of $\phi\inv(0)\in X\git G$.  Then $\phi$ lifts to a $G$-biholomorphism of $X$ and $V$ \cite[Remark 3.6]{KLS-2017-lin}.
\item If $V$ has the ILP and DP, then by perhaps changing $\phi$ by an automorphism of $Q$, $\phi$ lifts to a $G$-biholomorphism of $X$ and $V$ \cite[Section 5]{KLS-2017-lin}.
\end{enumerate}
\end{theorem}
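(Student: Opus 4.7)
The plan is to reduce both parts to a flow-extension construction driven by the Euler vector field $E$ on $V$. The flow $\psi_t^E(v) = e^t v$ is complete and contracts $V$ to the origin in backward time, and its pushforward $(\pi_V)_*E$ generates the complete, contracting weighted dilation on $Q$. Given a lift $B \in \Der(X)^G$ of $(\pi_V)_*E$ and a local $G$-biholomorphic lift $\Phi$ of $\phi$ over a neighbourhood $U$ of $\phi\inv(0)$, one transports this contraction to $X$ and spreads $\Phi$ out to all of $X$.

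For (1): I would first normalise $B$ so that $\Phi_*B = E$ on $X_U$. Since $\Phi^*E$ is also a lift of $(\pi_V)_*E|_U$, the difference $\Phi^*E - B|_{X_U}$ lies in $\Der_Q(X_U)^G$, and a $G$-invariant cut-off lets one modify $B$ globally so that the intertwining holds on a smaller $G$-saturated neighbourhood of $\phi\inv(0)$ without spoiling $B$ away from it. The backward flow of $B$ is then complete: its projection to $Q$ contracts to $\phi\inv(0)$, so backward trajectories are confined to $\pi_X\inv(K)$ for a relatively compact $K \subset Q$, and the Stein $G$-manifold structure prevents escape to infinity along fibres. Define
$$\tilde\Phi(x) \;=\; \psi_t^E\bigl(\Phi(\psi_{-t}^B(x))\bigr)$$
for any $t$ large enough that $\pi_X(\psi_{-t}^B(x)) \in U$; independence of $t$ follows from the intertwining $\Phi\circ\psi_s^B = \psi_s^E\circ\Phi$ near $\phi\inv(0)$, and $\tilde\Phi$ is holomorphic, $G$-equivariant, and bijective (the inverse is built by the symmetric formula using $E$ and $B$), giving the global lift.

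For (2): The ILP supplies the lift $B$, and the task reduces to producing a local $G$-biholomorphic lift near $\phi\inv(0)$, after which (1) applies. The role of DP is to reduce to the quasilinear case: for any $\theta \in \Aut(Q)$ fixing $0$, DP furnishes the quasilinear limit $\theta_0 = \lim_{t\to 0} t\inv\circ\theta\circ t$, and after replacing $\phi$ by $\theta\circ\phi$ for a suitable $\theta$ (chosen to kill the non-quasilinear part of the discrepancy between $\phi$ and a pilot identification coming from Luna slice data at the origin), I would arrange that $\phi$ commutes with the weighted $\C^*$-scaling near $0$. In this quasilinear situation, the unique closed orbit over $0$ in each of $X_{\phi\inv(0)}$ and $V_0$ is a $\C^*$-fixed point; the holomorphic Luna slice theorem \cite{Snow, Heinzner-1988} presents their $G$-saturated Stein neighbourhoods as $G \times_H W$ and $G \times_H W'$ respectively; and quasilinearity of $\phi$ together with the linearity of $V$ forces $W \simeq W'$ as $H$-modules by a weight-by-weight match on $Q$, producing the required local lift.

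The main obstacle is the local-lift step in (2): matching the Luna slice $H$-representations at the closed orbits over $0$ in $X$ and $V$ from just the identification of quotients. Quasilinearity — this is where DP is essential — reduces the match to comparing $\C^*$-weights on $Q$, and linearity of $V$ is what makes this comparison determinative, since the slice data on the $V$-side is rigidly dictated by the weights $e_1,\dots,e_d$. The flow-completeness and cut-off issues in (1), while requiring the Stein $G$-manifold machinery, are technical rather than conceptual.
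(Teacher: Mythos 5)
Your overall architecture agrees with the paper's: part (1) is proved by dragging the local lift around with the flow of $B$ against the contracting flow of $E$, and part (2) feeds the ILP (to get $B$) and the DP (to get the local lift near $\phi\inv(0)$ via quasilinearisation and the Luna slice at the $G$-fixed point) into part (1). But two steps as you describe them do not work. In (1), the normalisation of $B$ by a $G$-invariant cut-off destroys holomorphy of $B$, and there is no holomorphic substitute: you would need a global holomorphic section of the sheaf of $G$-invariant fibre-tangent fields that equals $\Phi^*E-B$ on an open neighbourhood of $X_{\phi\inv(0)}$, and holomorphic sections cannot be prescribed on open sets (only approximated on Runge ones, while the intertwining must be exact). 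Without exact intertwining, $\psi_t^E\circ\Phi\circ\psi_{-t}^B$ genuinely depends on $t$ and your $\tilde\Phi$ is not defined. The way out, and what the paper does, is to demand less of the flow: for each $q\in Q$, conjugating by the flows for a single finite time $t(q)$ produces a $G$-biholomorphic lift of $\phi$ over a neighbourhood of $q$, with no compatibility required between different $q$ or different $t$; this shows that $X$ and $V$ are locally $G$-biholomorphic over $Q$, and then Theorem \ref{thm:main.lin}(1) -- the Oka-theoretic gluing result, which your proposal never invokes -- yields the global $G$-biholomorphism. (Backward completeness of the flow of $B$, which you wave off, is also a genuine lemma and not a consequence of Steinness alone; even the local-lift version needs existence of the flow up to the entry time.)

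In (2), the decisive step is not a ``weight-by-weight match on $Q$.'' The degrees $e_1,\dots,e_d$ and the stratified quotient do not determine the module: non-isomorphic $G$-modules can have isomorphic stratified quotients, which is precisely why the Derksen--Kutzschebauch counterexamples and the largeness/admissibility hypotheses exist elsewhere in this theory, so ``linearity of $V$'' cannot by itself make the comparison determinative. What the DP actually provides is that the germ of $\phi$ at $\phi\inv(0)$, composed with the slice model $W$ at the $G$-fixed point of $X$ over $\phi\inv(0)$, may be replaced by its quasilinear limit $\theta_0$; one then needs a bona fide lifting theorem -- that a quasilinear strata-preserving isomorphism of quotients lifts to a $G$-equivariant isomorphism of the modules -- and that is where the real work of Section 5 of the cited paper lies. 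Two smaller elisions: the DP hypothesis concerns automorphisms of $Q$, whereas your discrepancy is only a germ at $0$; and the hypothesis gives the ILP for $V$, so obtaining the global lift $B$ on $X$ requires transferring the local lifting property through the slice models and then globalising with Theorem B over the Stein quotient.
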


The largeness condition of Theorem  \ref{thm:main.lin}(2) applies to most $G$-modules. The remaining $G$-modules are \lq\lq small\rq\rq.  For small modules, we have applied the criteria  of Theorem \ref{thm:main.lin2} in the following cases.

\begin{theorem}  \label{thm:small-modules}
Let $X$ be a Stein $G$-manifold and $V$ a $G$-module with stratified biholomorphism $\phi\colon X\git G\to Q=V\git G$.  In each of the following cases, by perhaps changing $\phi$ by an automorphism of $Q$, $\phi$ lifts to a $G$-biholomorphism of $X$ and $V$.
\begin{enumerate}
\item $\dim Q\leq 1$.
\item $G=\SL_2$ or $\SO_3$.
\item $G$ is finite.
\item $G^0=\C^*$.
\end{enumerate}
\end{theorem}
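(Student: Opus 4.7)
The approach is to apply Theorem \ref{thm:main.lin2}: for each of the four small cases we verify either the hypotheses of part (2), namely that $V$ has the ILP and the DP, or the hypotheses of part (1), namely that there is a local $G$-biholomorphic lift of $\phi$ near $\phi\inv(0)$ together with a $G$-invariant lift of $(\pi_V)_*(E)$. The results of Schwarz cited in the paper yield ILP and DP automatically once $V$ is large, so all the content of the theorem lies in handling the short list of non-large modules that can occur in each of the four settings.

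In case (1), the quotient is either a point (trivial) or $Q\simeq\C$. The push-down $(\pi_V)_*(E)$ is a positive multiple of $\partial_t$ and lifts to $\Der(X)^G$ by transporting through $\phi$ the $\C^*$-grading induced on $\O(X)^G\simeq\O(Q)$. Luna's slice theorem at the closed orbit over $0$ supplies a $G$-biholomorphic lift of $\phi$ on a neighbourhood of $\phi\inv(0)$, so Theorem \ref{thm:main.lin2}(1) applies and the global lift follows.

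For case (3), $G$ finite, DP is direct: the grading on $\C[V]^G$ has strictly positive weights, so for any $\theta\in\Aut(Q)$ fixing the origin the limit $t\inv\circ\theta\circ t$ as $t\to 0$ exists and equals the weighted leading part of $\theta$. ILP is verified by a Chevalley--Shephard--Todd style analysis on each isotropy stratum, extending a stratum-preserving derivation of $\O(V)^G$ to a $G$-invariant derivation of $\O(V)$ isotropy-type by isotropy-type. Case (4), $G^0=\C^*$, is analogous: the weight decomposition of $V$ makes the lift of derivations explicit coordinate by coordinate, and DP again follows from the positive-grading observation. In both cases Theorem \ref{thm:main.lin2}(2) concludes the argument.

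The main obstacle is case (2). For $G=\SL_2$ and $G=\SO_3$ the non-large modules form a short but non-trivial list that must be enumerated directly from the classical representation theory; for each such $V$ one has to write down generators and syzygies of $\C[V]^G$ explicitly and verify ILP and DP by hand, since no uniform argument of the kind used in cases (1), (3), (4) covers them. It is this explicit case-by-case verification, together with the bookkeeping needed to identify exactly which modules are not large, that we expect to consume the bulk of the work.
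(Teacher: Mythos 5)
Your overall frame---reduce everything to Theorem \ref{thm:main.lin2} by checking ILP and DP, or by producing a local lift of $\phi$ near $\phi\inv(0)$ together with a $G$-invariant lift of $(\pi_V)_*(E)$---is the right one, and your treatment of cases (1)--(3) is broadly in the spirit of the actual proofs (the paper disposes of (1) and (2) by showing the relevant modules are large or have the ILP and DP, and (3) by a short separate argument). But there are two genuine problems. First, your DP argument is not a proof: positivity of the degrees $e_1,\dots,e_d$ does not by itself force the limit $t\inv\circ\theta\circ t$ to exist, because the $e_i$ are in general distinct, and a component $\theta_i$ could a priori contain a monomial in the generators of weighted degree strictly less than $e_i$, producing a negative power of $t$. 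What rules this out is that $\theta$ preserves $Q$ and its stratification; extracting DP from that constraint is exactly the content of the cited result of Schwarz for large modules and must be argued separately in the small cases. Relatedly, in case (1) the Luna slice theorem gives a local $G$-biholomorphism inducing \emph{some} isomorphism of quotients near $0$, not a lift of the given $\phi$, so a further adjustment of $\phi$ is needed before \ref{thm:main.lin2}(1) applies.

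The more serious gap is case (4), which you dismiss as ``analogous'' to the finite case with an explicit coordinate-by-coordinate lift of derivations. This is where essentially all the difficulty of the theorem lies (you have the weighting backwards: cases (1) and (2) are the easy ones, not a long enumeration of non-large $\SL_2$- and $\SO_3$-modules). The easy subcases for $G^0=\C^*$ are when all nonzero weights of $\C^*$ on $V$ have the same sign, when there are at least two weights of each sign, or when $\dim V=2$; these do reduce to the earlier techniques. The hard subcase---say one strictly positive weight and at least two strictly negative ones---is \emph{not} handled by verifying ILP and DP and invoking \ref{thm:main.lin2}(2). Instead one reduces to $\phi$ being the identity on $V^G$, proves a lifting proposition for automorphisms of a neighbourhood of $V^G$ (resp.\ $V^{G^0}/G$) in $Q$, uses it to lift $\phi$ to a $G$-biholomorphism $\Phi$ over a neighbourhood $U_0$ of $X^{G^0}/G$, and then assembles a global $G$-invariant lift of $B=(\pi_V)_*(E)$ by combining $\Phi\inv_*E$ on $X_{U_0}$ with local invariant lifts away from $U_0$ (possible there because the closed orbits have finite isotropy), patched using that $X\git G$ is Stein; only then does \ref{thm:main.lin2}(1) finish. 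None of this is visible in your sketch, and the ``explicit coordinate by coordinate'' lift does not survive in the hard subcase.
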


Parts (1) and (2) are in \cite{KLS-2017-lin} and it is rather easy to show that the relevant $G$-modules $V$ are large or have the ILP and DP.  Part (3) is easy and is \cite[Theorem 3.8]{Kutzschebauch-Schwarz} (and should have been noted well before!).  Part (4)
is much more difficult and is established in \cite{Kutzschebauch-Schwarz}.  So let us suppose that $G^0=\C^*$.  There are three \lq\lq easy cases\rq\rq\ where all the nonzero weights of $\C^*$ acting on $V$ have the same sign, there are at least two weights of each sign, or $\dim V=2$.  We are then able to reduce to the techniques and theorems above. The hard part is if there is, say, only one strictly positive weight and  at least two strictly negative weights. 

Note that $X^G$ and $V^G$ are strata of $X\git G$ and $Q$.  It is not hard to reduce to the case that $\phi\colon X^G\to V^G$ is the identity.  Then one establishes the following proposition.

\begin{proposition}
Let $\theta\in\Aut(U)$ where $U$ is a connected neighbourhood of $V^G\subset Q$ (resp.\ $V^{G^0}/G\subset Q$) and $\theta$ is the identity on $V^G$ (resp.\ $V^{G^0}/G$).  Then, modulo $\Aut(Q)$, $\theta$ has a $G$-equivariant lift $\Theta$ to $\pi_V\inv(U)$.
\end{proposition}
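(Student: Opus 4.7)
The plan is to exploit the $\C^*$-action on $Q$ coming from the weighted generators $p_1,\dots,p_d$ together with the deformation and infinitesimal lifting properties, using the weight decomposition $V = V_+ \oplus V_0 \oplus V_-$ of the $G^0 = \C^*$-module $V$, where $V_0 = V^{G^0}$, $V_+$ consists of the single positive weight space, and $V_-$ is the direct sum of the (at least two) negative weight spaces. First I would verify that the deformation property is available: since $\theta$ fixes $V^G$ (resp.\ $V^{G^0}/G$) pointwise, the family $\theta_t = t\inv\circ \theta\circ t$ extends continuously to $t=0$, producing a quasilinear $\theta_0 \in \Aut_\ql(Q)$ fixing the same stratum. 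Replacing $\theta$ by $\theta_0\inv\circ\theta$, I reduce to the case that $\theta$ is the identity to first order along the $\C^*$-fixed stratum, at the cost of having to lift the quasilinear $\theta_0$ separately.

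Next I would produce an explicit $G$-equivariant lift $\Theta_0$ of $\theta_0$. Because $\theta_0$ commutes with the weighted $\C^*$-action and is the identity on the relevant stratum, its nontrivial components are polynomial: they can only mix the single positive-weight generator with monomials in the negative-weight generators of matching weight, of which there are only finitely many (the hypothesis of at least two negative weights is what makes this list genuinely nontrivial but still finite). Each such monomial on $Q$ pulls back to a $\C^*$-equivariant polynomial on $V$ with values in $V_+$, giving a polynomial map $\Theta_0\colon V\to V$ covering $\theta_0$. To get full $G$-equivariance I would average over the finite quotient $F = G/G^0$; the resulting lift may cover $\theta_0$ only up to some $\sigma \in \Aut(Q)$ coming from the reshuffling of the multiple negative-weight coordinates by $F$, and this residue is absorbed into the "modulo $\Aut(Q)$" clause of the statement.

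For the remaining factor $\theta_1 = \theta\circ\theta_0\inv$, which is now the identity to first order along $V^G$ (resp.\ $V^{G^0}/G$), I would apply the $\Der(Q)$-lifting machinery of Theorem \ref{thm:main.lin2}(1). Write $\theta_1 = \exp(P)$ locally for $P\in\Der(Q)$ supported on a neighbourhood $U$; by the largeness-type ILP statement available in this case (or by directly constructing the lift of the Euler field $B$ as in Theorem \ref{thm:main.lin2}(1) and iterating in the weighted grading), lift $P$ to $B \in \Der(V)^G$ and exponentiate to $\Theta_1 = \exp(B)$ on $\pi_V\inv(U')$ for a slightly smaller $U'$. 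Convergence of the iteration is controlled by the single positive weight, which bounds the denominators that appear. Setting $\Theta = \Theta_1\circ\Theta_0$ then gives the required $G$-equivariant lift of $\sigma\circ\theta$.

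The main obstacle I expect is Step 2: constructing $\Theta_0$ so that it is genuinely $G$-equivariant (not merely $\C^*$-equivariant) and not just \emph{some} lift at the level of $Q$. With one positive weight and two or more negative weights, the quasilinear mixing terms live in a space on which the finite group $F = G/G^0$ acts nontrivially, and a naive lift need not be $F$-invariant. This is precisely why the statement is only modulo $\Aut(Q)$: the obstruction to making the chosen lift equivariant is captured by an automorphism of $Q$ that must be removed before the lifting argument proceeds, and verifying that this obstruction lies in $\Aut(Q)$ rather than in some larger object is the delicate point of the proof.
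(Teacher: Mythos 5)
The survey states this proposition without proof (it is established in the cited paper of Kutzschebauch and Schwarz), so your argument can only be measured against the surrounding context; but on its own terms it has two serious gaps. First, your opening step asserts that the deformation $\theta_t=t\inv\circ\theta\circ t$ converges as $t\to 0$ ``since $\theta$ fixes $V^G$ (resp.\ $V^{G^0}/G$) pointwise''. That is not a verification: convergence of $\theta_t$ requires that the $i$-th component of $\theta$ in the weighted coordinates $y_1,\dots,y_d$ vanish to weighted order at least $e_i$ at the origin, and fixing a stratum pointwise gives no such vanishing in the directions transverse to the stratum (already $\theta(y_1,y_2)=(y_1,y_2+y_1)$ fixes $\{y_1=0\}$ pointwise, yet $t\inv\circ\theta\circ t$ diverges when $e_1<e_2$). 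The deformation property and the ILP are exactly the properties the paper extracts from largeness, and the case $G^0=\C^*$ with one positive and at least two negative weights is singled out as ``hard'' precisely because $V$ is not large there, so neither DP nor ILP can be taken off the shelf. Your proposal essentially reruns the large-module machinery in the one situation where it is known to be unavailable.

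Second, your final step invokes Theorem \ref{thm:main.lin2}(1) (lifting via the Euler vector field) to finish, but the logical order in the paper is the reverse: the proposition is the tool used to produce the local $G$-biholomorphic lift $\Phi$ over a neighbourhood of $X^{G^0}/G$, and it is $\Phi\inv$ applied to the Euler field that supplies the local lift which Theorem \ref{thm:main.lin2}(1) takes as a hypothesis. Using that theorem to prove the proposition is circular relative to the intended architecture. There are smaller problems as well: an automorphism that is the identity to first order along a stratum need not be $\exp(P)$ for a derivation $P$, and even when a logarithm exists the lifted field $B\in\Der(V)^G$ need not be complete on $\pi_V\inv(U)$; the conclusion asked for is a lift over the neighbourhood $\pi_V\inv(U)$ of the fixed-point stratum, not a global polynomial map $V\to V$; and the cases $V^G$ and $V^{G^0}/G$ are genuinely different (the latter is where the finite group $G/G^0$ enters) but are conflated in your sketch. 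What is right in your proposal is the guiding intuition that the single positive weight and the ``modulo $\Aut(Q)$'' clause are where the difficulty concentrates; but the equivariant local lift near the fixed-point set has to be constructed by hand from the structure of $\C[V]^G$ in this weight configuration, not deduced from DP and ILP.
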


Using the proposition we are able to lift $\phi$ (after changing by some elements of $\Aut(Q)$) to a $G$-biholomorphism $\Phi$ over a neighbourhood $U_0$ of $X^{G^0}/G\subset X\git G$.  Let $B$ denote  $(\pi_V)_*(E)$ considered as an element of $\Der(X\git G)$ via $\phi\inv$.  Via $\Phi\inv$ applied to $E$ we have a holomorphic $G$-invariant vector field on $X_{U_0}$ which lifts $B$.  Away from $X_{U_0}$, the isotropy groups of closed orbits in $X$ are finite and we can find local $G$-invariant holomorphic lifts of $B$. Since $X\git G$ is Stein, there is a $G$-invariant holomorphic lift of $B$ to $X$.  Now apply Theorem \ref{thm:main.lin2}(1).

\section{Equivariant sections of bundles of homogeneous spaces}  
\label{sec:sections}

\noindent
In \cite{KLS-2018}, we combined many of the results on the Oka principle from the Grauert era into a single theorem in the homotopy-theoretic language of modern Oka theory.  Moreover, we generalised these results to an equivariant setting, with respect to a holomorphic action of a reductive complex Lie group.  Recall that complexification defines an equivalence of categories from compact real Lie groups to reductive complex Lie groups.  Throughout this section, $K$ denotes a compact real Lie group with complexification $K^\C$.

A special case of this equivariant setting had been considered before by Heinzner and Kutzschebauch \cite{Heinzner-Kutzschebauch}, motivated by the negative solution of the algebraic linearisation problem by Schwarz \cite{Schwarz-1989b}.  He constructed algebraic $K^\C$-vector bundles of representation spaces which are not $K^\C$-trivial and thus obtained non-linearisable algebraic actions on their total spaces.  These total spaces are isomorphic to affine spaces.  The relevant corollary of Heinzner and Kutzschebauch's work is that, unlike the algebraic situation, holomorphic $K^\C$-bundles over representation spaces are always $K^\C$-trivial, so the action on the total space is holomorphically linearisable.

Our setting is as follows.  Let $X$ be a reduced Stein space.  Let $G$ be a complex Lie group and $\G$ be a holomorphic group bundle on $X$ with fibre $G$.  By definition, $\G$ is defined by a holomorphic cocycle with respect to some open cover of $X$ with values in a complex Lie subgroup $A$ of the Lie automorphism group of $G$.  We call $A$ the structure group of $\G$ and we call $\G$ a holomorphic group $A$-bundle.  Let $\H$ be a holomorphic group subbundle of $\G$, whose fibre is a closed subgroup $H$ of $G$, so $\G$ may in fact be defined by a holomorphic cocycle with values in the group of Lie automorphisms of $G$ that preserve $H$.  Thus we assume that $A$ preserves $H$.

Let $P$ be a holomorphic principal bundle on $X$ with structure group bundle $\G$ acting from the right -- we call $P$ a principal $\G$-bundle -- and let $E$ be the quotient bundle $P/\H$.  Then $E$ is a holomorphic fibre bundle on $X$ with fibre $G/H$ (left cosets) and structure group bundle $\G$ acting from the left.  Each fibre of $\G$ acts transitively on the corresponding fibre of $E$.  We call $E$ a homogeneous $\G$-bundle.  The principal bundle $P$ is defined by a holomorphic $\G$-valued cocycle, which tells us how to form $P$ by glueing together pieces of $\G$ over an open cover of $X$.  The same cocycle encodes how $E$ may be constructed from the quotient bundle $\G/\H$ (left cosets).  Note that the action of $\G$ on $P$ need not descend to an action on $E$ (right multiplication does not respect left cosets).

Now we describe the $K^\C$-actions in our setting.  Let $K^\C$ act holomorphically on $X$, and holomorphically and compatibly on $\G$ by group $A$-bundle maps (which preserve $\H$).  This means that $K^\C$ acts on the fibres of $\G$ by elements of $A$, which makes sense because each fibre of $\G$ is canonically identified with $G$ modulo $A$.  Let $K^\C$ also act holomorphically and compatibly on $P$ such that the action map $P\times_X \G\to P$ is $K^\C$-equivariant.  We call $P$ with such an action a principal $K^\C$-$\G$-bundle.  The action of $K^\C$ on $P$ descends to an action on $E$.  We summarise all the above data by referring to $E$ as a homogeneous $K^\C$-$\G$-bundle.  

Viewed as a holomorphic fibre bundle with fibre $G$, the bundle $P$ can be taken to have the semidirect product $A\ltimes G$ as its structure group.  Equivariance of the action map $P\times_X \G\to P$ is equivalent to $K^\C$ acting on $P$ by $A\ltimes G$-bundle maps, meaning that $K^\C$ acts on the fibres of $P$ by elements of $A\ltimes G$.  If $P'$ is another holomorphic principal $K^\C$-$\G$-bundle, then the holomorphic group bundle $\Aut\, P$ with fibre $G$ and the holomorphic principal bundle $\Iso(P',P)$ with fibre $G$ and structure group bundle $\Aut\, P$ have induced structure groups that are complex Lie groups and they have induced $K^\C$-actions by elements of the respective structure group that make the action map $\Iso(P',P)\times_X \Aut\, P\to\Iso(P',P)$ equivariant.  All spaces of sections are endowed with the compact-open topology.

\smallskip

The main result of \cite{KLS-2018} is Theorem \ref{mainthm:sections} from the introduction.  As described above, we have a homogeneous holomorphic $K^\C$-$\G$-bundle $E$ on the reduced Stein space $X$, where $K$ is a compact real Lie group and $\G$ is a holomorphic group $K^\C$-bundle on $X$.  The theorem states that the inclusion of the space of $K^\C$-equivariant holomorphic sections of $E$ over $X$ into the space of $K$-equivariant continuous sections is a weak homotopy equivalence.

The proof of Theorem \ref{mainthm:sections} follows the approach of the Grauert era, clearly and elegantly described by Cartan in \cite{Cartan}.  This approach has the advantage that we can apply results from Heinzner and Kutzschebauch's paper \cite{Heinzner-Kutzschebauch}.  Let us review some notions central to this approach, adapted to the equivariant setting.  First we need the Kempf-Ness set. 

To every real-analytic $K$-invariant strictly plurisubharmonic exhaustion function on $X$ (such functions exist) is associated a real-analytic subvariety $R$ of $X$ called a Kempf-Ness set.  It consists of precisely one $K$-orbit in every closed $K^\C$-orbit in $X$.  The inclusion $R\hookrightarrow X$ induces a homeomorphism $R/K\to X\git K^\C$, where the orbit space $R/K$ carries the quotient topology.  Informally speaking, the Kempf-Ness set is where $K$-equivariant continuous information can be related to holomorphic $K^\C$-equivariant information.  This is underlined by the following result, which in its original form is due to Neeman \cite{Neeman-1985}; see also \cite{Schwarz-1989b} and \cite{Heinzner-Huckleberry}.

\begin{theorem}  \cite[p.~341]{Heinzner-Kutzschebauch}   \label{thm:HK-retraction}
There is a real-analytic $K$-invariant strictly plurisubharmonic exhaustion function on $X$, whose Kempf-Ness set $R$ is a $K$-equivariant continuous strong deformation retract of $X$, such that the deformation preserves the closure of each $K^\C$-orbit.
\end{theorem}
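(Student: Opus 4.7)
The plan is to construct a real-analytic $K$-invariant strictly plurisubharmonic exhaustion $\rho\colon X\to\R$ and then realise the retraction to the Kempf-Ness set $R$ by an appropriately reparametrised gradient flow of $\rho$ along the ``imaginary'' directions of the $K^\C$-action. First I would produce $\rho$ by an equivariant Remmert embedding: embed $X$ as a closed $K^\C$-invariant subvariety of some $K^\C$-module $V$, and take $\rho$ to be the restriction to $X$ of a $K$-invariant Hermitian norm squared on $V$ (obtained by averaging $\|\cdot\|^2$ over $K$ if necessary). Real-analyticity, $K$-invariance, and strict plurisubharmonicity are automatic from this construction, and properness of the embedding gives the exhaustion property. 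The Kempf-Ness set is then $R=\mu^{-1}(0)$, where $\mu\colon X\to\mathfrak{k}^*$ is the moment map $\mu(x)(\xi)=\tfrac{1}{2}\,d\rho(J\xi_X)(x)$ for $\xi\in\mathfrak{k}$.

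Next I would exploit the fact that for each $\xi\in\mathfrak{k}$, the function $t\mapsto\rho(e^{it\xi}\cdot x)$ is a real-analytic convex function of $t\in\R$ whose derivative at $0$ equals $\mu(x)(\xi)$. Convexity, which reflects the Kähler identity that $J\xi_X$ is the Hamiltonian gradient of the moment component $\mu(\cdot)(\xi)$ with respect to $i\partial\bar\partial\rho$, has two crucial consequences. On a closed $K^\C$-orbit, the restriction of $\rho$ attains its infimum precisely along the single $K$-orbit $R\cap K^\C\cdot x$, and the negative gradient flow of $\rho$ restricted to the orbit retracts the orbit onto that $K$-orbit. On a non-closed orbit, flow lines are drawn into the unique closed $K^\C$-orbit in $\overline{K^\C\cdot x}$ and their $\omega$-limits lie in $R$. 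In every case the trajectory stays inside $\overline{K^\C\cdot x}$, which is exactly the orbit-closure preservation demanded by the statement.

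To package this into a continuous strong deformation retract $H\colon X\times[0,1]\to X$, I would let $\phi_s$ denote the orbitwise negative gradient flow of $\rho$ and set $H(x,t)=\phi_{\tau(x,t)}(x)$, where the time change $\tau$ is chosen so that $H(x,0)=x$, $H(x,1)$ is the $\omega$-limit of $\phi_s(x)$ in $R$, and $H(r,t)=r$ for $r\in R$. A natural choice is to parametrise by a strictly decreasing function of $\rho(\phi_s(x))-\inf_s\rho(\phi_s(x))$. $K$-equivariance is inherited from the invariances of $\rho$ and $\mu$.

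The main obstacle is joint continuity of $H$ at $t=1$ across the stratification of $X$ by $K^\C$-orbit type: trajectories that start on a non-closed orbit only reach $R$ in infinite $s$-time, and one must show that the $\omega$-limit depends continuously on $x$ as $x$ crosses from one orbit stratum to another. This is the technical heart of the argument and is handled by the Łojasiewicz inequality for the real-analytic function $\rho$, which forces exponential-type convergence of gradient trajectories with uniform rate controlled by $\|\mu\|$, combined with equivariant Kempf-Ness estimates bounding $\phi_s(x)$ in terms of the distance of $x$ to the closed orbit in $\overline{K^\C\cdot x}$. Together these yield the required continuity of $H$ on all of $X\times[0,1]$ and complete the proof.
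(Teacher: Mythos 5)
The survey does not actually prove this statement --- it is imported verbatim from Heinzner--Kutzschebauch (and, as the surrounding text notes, goes back to Neeman, with later treatments by Schwarz and Heinzner--Huckleberry) --- and your proposal follows essentially the same route as those sources: an invariant strictly plurisubharmonic real-analytic exhaustion $\rho$, the associated moment map $\mu$ with $R=\mu^{-1}(0)$, convexity of $t\mapsto\rho(e^{it\xi}\cdot x)$, a gradient-type flow tangent to the $K^\C$-orbits, and a continuity argument at infinite flow time, which you correctly single out as the technical heart and for which the Lojasiewicz gradient inequality (applied to the real-analytic function $\|\mu\|^2$) is indeed the modern tool. Two steps in your write-up need repair, though neither changes the strategy. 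First, a Stein $K^\C$-manifold need not admit a proper $K^\C$-equivariant holomorphic embedding into a finite-dimensional $K^\C$-module: an equivariant immersion into $V$ forces every slice representation of $X$ to embed into $V$, and these can be unbounded. So you cannot in general produce $\rho$ by restricting an invariant Hermitian norm from such an embedding. The standard fix is simpler: embed $X$ properly but non-equivariantly into some $\C^N$, restrict $\|\cdot\|^2$, and average over the compact group $K$; averaging preserves real-analyticity, strict plurisubharmonicity and the exhaustion property.

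Second, the ``orbitwise negative gradient flow of $\rho$'' is not literally the flow of a continuous vector field, since projecting $\operatorname{grad}\rho$ onto the tangent spaces of the $K^\C$-orbits is discontinuous where the orbit dimension jumps. The flow actually used is that of $-\operatorname{grad}\|\mu\|^2=-2J(\mu^\sharp)_X$ (gradient with respect to the K\"ahler metric of $i\partial\bar\partial\rho$), a globally defined real-analytic vector field tangent to every orbit and vanishing exactly on the critical set of $\|\mu\|^2$; along its trajectories $\tfrac{d}{ds}\rho=-4\|\mu\|^2\le 0$, so orbits of the flow stay in compact sublevel sets of $\rho$ and inside each $K^\C$-orbit closure. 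You should also say explicitly why the $\omega$-limits land in $R$ rather than merely in the critical set of $\|\mu\|^2$: for a general moment map that critical set is strictly larger than $\mu^{-1}(0)$ (this is what produces the Kirwan stratification in the projective setting), but for $\mu_\xi=\tfrac12\,d\rho(J\xi_X)$ one has $\|\mu(x)\|^2=\mu_{\mu(x)^\sharp}(x)=\tfrac12\,d\rho\bigl(J(\mu(x)^\sharp)_X\bigr)(x)=0$ at every critical point, so the critical set coincides with $R$ and the strong deformation retraction (the flow fixes $R$ pointwise) does terminate in $R$. With these corrections your argument is the standard one.
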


The following equivariant version of the covering homotopy theorem is used to prove an important fact about the Kempf-Ness set (Proposition \ref{prp:topological.fact} below). 

\begin{theorem}  \cite[Theorem 2.6]{KLS-2018}
\label{thm:homotopy.theorem}
Let a compact Lie group $K$ act real-analytically on a Stein space $X$ by biholomorphisms, and trivially on $I=[0,1]$.  Let $G$ be a complex Lie group and $\G$ be a topological group bundle on $X\times I$ with fibre $G$, whose structure group $A$ is a Lie subgroup of the Lie automorphism group of $G$.  Let $K$ act continuously on $\G$ by group $A$-bundle maps.

{\rm (a)}  Then $\G$ is isomorphic to a constant bundle (depends trivially on $t\in I$).

{\rm (b)}  Let $P$ be a topological principal $K$-$\G$-bundle on $X\times I$.  (It is implicit that the action map $P\times_X \G\to P$ is $K$-equivariant.)  By {\rm (a)}, we may take $\G$ to be constant.  Then $P$ is isomorphic to a constant bundle.  Hence, once we identify the bundles $\G\vert_{X\times\{t\}}$, $t\in I$, with a topological group $K$-bundle $\G_0$ on $X$, the topological principal $K$-$\G_0$-bundles $P\vert_{X\times\{t\}}$, $t\in I$, are mutually isomorphic.
\end{theorem}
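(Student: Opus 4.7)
The plan is to prove (a) and (b) by equivariant versions of the classical Dold-style covering homotopy theorem for fibre bundles over $X\times I$. Write $\G_0=\G|_{X\times\{0\}}$ and $P_0=P|_{X\times\{0\}}$. For (a) I aim to construct a $K$-equivariant group-bundle isomorphism $\G\cong\pi^*\G_0$ over $X\times I$, where $\pi\colon X\times I\to X$ is the projection; then (b) will follow by the same argument applied to $P$ once $\G$ has been identified with $\pi^*\G_0$.

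The first task for (a) is to produce $K$-equivariant local trivialisations. Fix $(x_0,t_0)\in X\times I$ and apply the tube theorem for the proper continuous action of the compact group $K$ on $X$ to obtain a $K$-invariant neighbourhood of $Kx_0$ of the form $K\times_{K_{x_0}}S$ for some slice $S$ through $x_0$, with compact stabiliser $K_{x_0}$. Pick any continuous local trivialisation of $\G$ near $(x_0,t_0)$ in $S\times I$ and average it over $K_{x_0}$, which acts on the fibre $G$ by elements of the structure group $A$; this yields a $K_{x_0}$-equivariant local trivialisation that extends by $K$-translation to a $K$-equivariant trivialisation on the tube. By compactness of $I$, finitely many such trivialisations with a common base $U$ and intervals $J_1,\dots,J_n$ covering $I$ can be spliced, using that the resulting $A$-valued transition functions on the overlaps $J_i\cap J_{i+1}$ depend continuously on $t$ and can be absorbed by successive retrivialisation, to yield a single $K$-equivariant trivialisation of $\G|_{U\times I}$ as the pullback of $\G_0|_U$. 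Now cover $X$ by such $K$-invariant open sets $(U_\alpha)$ and choose a $K$-invariant continuous partition of unity $(\phi_\alpha)$ subordinate to it (obtained by averaging any continuous partition of unity over the compact group $K$). The local trivialisations furnish fibrewise $K$-equivariant retractions $r_\alpha\colon\G|_{U_\alpha\times I}\to\G|_{U_\alpha\times\{0\}}$, and the standard Dold-style patching, blending the retraction time-parameters with the $\phi_\alpha$, assembles them into a global $K$-equivariant bundle retraction $\G\to\G_0$ covering $X\times I\to X$, which is precisely the sought isomorphism $\G\cong\pi^*\G_0$.

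For (b), once (a) identifies $\G$ with $\pi^*\G_0$, we view $P$ as a continuous principal $K$-$\pi^*\G_0$-bundle on $X\times I$, equivalently a continuous fibre bundle with fibre $G$, structure group contained in $A\ltimes G$, and a compatible $K$-action by bundle maps. Exactly the same equivariant recipe applies: the tube theorem plus averaging over compact stabilisers produces $K$-equivariant local trivialisations, compactness of $I$ splices them into trivialisations over $U\times I$, and an equivariant partition of unity together with Dold-style patching assembles them into a global $K$-equivariant $\pi^*\G_0$-bundle isomorphism $P\cong\pi^*P_0$. The main obstacle throughout is the very first step: upgrading a bare continuous local trivialisation to a genuinely $K$-equivariant one. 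This is where compactness of $K$ is essential, both to invoke the tube theorem for the proper continuous $K$-action and to legitimise the Haar-measure averaging that promotes a non-equivariant trivialisation to an equivariant one; the remainder is the standard Dold covering-homotopy machinery transplanted into the equivariant category.
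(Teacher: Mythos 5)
The survey does not reproduce a proof of this theorem (it only cites \cite[Theorem 2.6]{KLS-2018}), but your architecture -- equivariant local trivialisations over $K$-invariant tubes, splicing over $I$ by compactness, and a Dold-style global assembly with a $K$-invariant partition of unity -- is the standard route to equivariant covering homotopy theorems (Lashof, tom Dieck) and matches the spirit of the cited proof. The reduction of (b) to the same machinery by regarding $P$ as a fibre bundle with fibre $G$ and structure group $A\ltimes G$, acted on by $K$ through $A\ltimes G$-bundle maps, is also exactly how the source sets things up.

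There is, however, a genuine gap at the one step you flag as the ``main obstacle'': you cannot obtain a $K_{x_0}$-equivariant local trivialisation by ``averaging'' an arbitrary continuous trivialisation over the compact stabiliser. Two trivialisations of $\G$ over a slice differ by a map into $A$ (and for $P$, into $A\ltimes G$), and these are non-abelian Lie groups with no linear structure, so Haar-measure averaging of the translates $k\cdot\phi$, $k\in K_{x_0}$, is not a defined operation. The correct argument needs an extra idea: first observe that over the orbit itself the bundle is automatically induced, $\G\vert_{Kx_0}\cong K\times_{K_{x_0}}\G_{x_0}$, with $K_{x_0}$ acting on the fibre through a fixed homomorphism $\rho\colon K_{x_0}\to A$; then, after shrinking the slice $S$ so that all the $K_{x_0}$-translates of a chosen trivialisation are uniformly close to one another (they agree at $x_0$ by the computation $h\cdot e=e\cdot\rho(h)$), perform the average in a local exponential chart of the Lie group (a centre-of-mass argument), or invoke Lashof's theorem that for compact $K$ and completely regular base a locally trivial bundle with Lie structure group is automatically $K$-locally trivial. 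This is precisely where the hypotheses that $K$ is compact and that $A$ (resp.\ $A\ltimes G$) is a \emph{Lie} group enter, and it must be supplied; once it is, the splicing over $I$ and the equivariant Dold patching go through as you describe.
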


As a consequence we have the following result.

\begin{proposition}   \cite[Proposition 2.7]{KLS-2018}
\label{prp:topological.fact}
Suppose that a compact Lie group $K$ acts real-analytically on a Stein space $X$ by biholomorphisms.  Let $G$ be a complex Lie group and $\G$ be a topological group bundle on $X$ with fibre $G$, whose structure group $A$ is a Lie subgroup of the Lie automorphism group of $G$.  Let $K$ act continuously on $\G$ by group $A$-bundle maps.

Let $E$ be a topological $K$-$\G$-bundle on $X$ (not necessarily homogeneous).  The restriction map from the space of continuous $K$-sections of $E$ over $X$ to the space of continuous $K$-sections of $E$ over $R$ is a homotopy equivalence.
\end{proposition}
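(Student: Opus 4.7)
The plan is to promote the $K$-equivariant deformation retraction of Theorem \ref{thm:HK-retraction} to the level of section spaces, using the covering homotopy theorem (Theorem \ref{thm:homotopy.theorem}) to turn the homotopy in the base into a genuine homotopy of sections of $E$. Let $H\colon X\times I\to X$ be the deformation retraction, so $H_0=\id_X$, $H_1=\rho\colon X\to R$, and $H_t|_R=\id_R$ for every $t\in I$. The pullback bundles $H^*\G$ and $H^*E$ inherit compatible $K$-actions (trivial in the $I$-direction), so Theorem \ref{thm:homotopy.theorem} yields an isomorphism of $K$-bundles $\Psi\colon H^*E\to \pr_X^*E$ over $X\times I$; after replacing $\Psi$ by $\Psi\circ\Psi_0^{-1}$ I may assume $\Psi_0=\id_E$. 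Writing $\Psi_t\colon H_t^*E\to E$ for the $t$-slice, I obtain in particular a $K$-equivariant $\G$-bundle isomorphism $\Psi_1\colon \rho^*E\to E$.

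Define the extension map
\[ \varepsilon\colon \Gamma_K(R,E|_R)\to \Gamma_K(X,E),\qquad \varepsilon(\tau)(x)=\Psi_1(x,\tau(\rho(x))). \]
To see that $\varepsilon\circ r\simeq\id$, set $F_t(\sigma)(x)=\Psi_t(x,\sigma(H_t(x)))$ for $\sigma\in\Gamma_K(X,E)$: then $F_0=\id$, $F_1=\varepsilon\circ r$, and joint continuity of $\Psi$ and $H$ makes $F$ a continuous homotopy in the compact-open topology. For the other composition, the condition $H_t|_R=\id_R$ makes $H_t^*E|_R$ canonically equal to $E|_R$, so $\Psi_t|_R$ is a continuous path in the group of $K$-equivariant automorphisms of $E|_R$ from $\id$ to $\Psi_1|_R$; setting $G_t(\tau)=\Psi_t|_R\circ\tau$ then furnishes a homotopy from $\id_{\Gamma_K(R,E|_R)}$ to $r\circ\varepsilon$.

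The main obstacle I anticipate is extracting from Theorem \ref{thm:homotopy.theorem} an isomorphism $\Psi$ that is simultaneously $K$-equivariant, compatible with the $\G$-action, and covers a fibre bundle $E$ that need not be principal. The statement of Theorem \ref{thm:homotopy.theorem} is for principal bundles, so one should first apply it to the principal $K$-$\G$-bundle underlying $E$ and then descend to $E$ via the associated bundle construction; checking that this descent is compatible with the normalisation $\Psi_0=\id$ and with the restriction $\Psi_t|_R$ is where the genuine substance lies. Once $\Psi$ is produced, everything else is formal.
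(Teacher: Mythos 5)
Your argument is correct and is precisely the route the paper intends (and the reason Proposition \ref{prp:topological.fact} is stated immediately after Theorems \ref{thm:HK-retraction} and \ref{thm:homotopy.theorem}): pull $E$ back along the $K$-equivariant retraction homotopy, rigidify in the $I$-direction via the equivariant covering homotopy theorem applied to the underlying principal $K$-$\G$-bundle, and read off the two section-space homotopies from the slices $\Psi_t$. The only cosmetic point is that the normalisation should be post-composition with the $t$-independent automorphism $\pr_X^*(\Psi_0^{-1})$ rather than pre-composition by $\Psi_0^{-1}$; the descent to $E$ is the formal associated-bundle functoriality you describe.
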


In the following, we take $R$ to be a Kempf-Ness set as defined above.  Next comes the central notion of an $NHC$-section.

Let $C$ be a compact Hausdorff space and $N\subset H$ be closed subsets of $C$, such that $N$ is a strong deformation retract of $C$.  We define a sheaf $\Q(R)$ of topological groups on $X\git K^\C$ as follows.  For each open subset $V$ of $X\git K^\C$, the group $\Q(R)(V)$ consists of all $K$-equivariant $NHC$-sections of $\G$ over $W=(\pi^{-1}(V)\times H)\cup((\pi^{-1}(V)\cap R)\times C)$.  By an $NHC$-section of $\G$ over $W$, we mean a continuous map $s:W\to\G$ such that:
\begin{itemize}
\item  for every $c\in C$, the map $s(\cdot,c)$ is a continuous section of $\G$ over $\pi^{-1}(V)\cap R$,
\item  for every $c\in H$, $s(\cdot,c)$ is a holomorphic section of $\G$ over $\pi^{-1}(V)$,
\item  for every $c\in N$, $s(\cdot,c)$ is the identity section of $\G$ over $\pi^{-1}(V)$.
\end{itemize}
The topology on $\Q(R)(V)$ is the compact-open topology.

Now we formulate the relevant results from \cite{Heinzner-Kutzschebauch}, first the equivariant analogue of the classical {\it th\'eor\`eme principal} \cite[p.~105]{Cartan}.

\begin{theorem}  \cite[p.~324]{Heinzner-Kutzschebauch}   \label{thm:HK-NHC}
\begin{enumerate}
\item[(a)]  The topological group $\Q(R)(X\git K^\C)$ is path connected.
\item[(b)]  If $U$ is Runge in $X\git K^\C$, then the image of $\Q(R)(X\git K^\C)$ in $\Q(R)(U)$ is dense.
\item[(c)]  $H^1(X\git K^\C,\Q(R))=0$.
\end{enumerate}
\end{theorem}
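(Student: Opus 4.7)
The strategy mirrors the Grauert-Cartan proof of the classical théorème principal, adapted to the equivariant NHC-setting. In the classical case, path connectedness, Runge approximation, and $H^1$-vanishing for the sheaf of group-valued sections are obtained by linearising near the identity, applying Cartan's Theorems A and B and Oka-Weil approximation to the resulting sheaves, and then bootstrapping back to the group-valued sheaf via the exponential map. The task is to make this scheme work for the hybrid sheaf $\Q(R)$, whose sections are only continuous and $K$-equivariant on $R$ but holomorphic and $K^\C$-equivariant on $X$, glued over the parameter space $C$ with boundary behaviour enforced on $N$.

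First I would introduce the corresponding sheaf of Lie-algebra-valued NHC-sections on $X\git K^\C$, obtained by replacing $\G$ with its bundle of Lie algebras and the identity section with the zero section. Fibrewise exponentiation gives a local homeomorphism from a neighbourhood of $0$ in this additive sheaf to a neighbourhood of the identity in $\Q(R)$. For the additive sheaf one establishes the linearised counterparts of (a)--(c): its sections over any Stein open form a Fréchet space, hence are contractible and in particular path connected; they satisfy Runge approximation over Runge opens; and they have vanishing first cohomology over $X\git K^\C$. The key inputs here are Theorems \ref{thm:HK-retraction} and \ref{thm:homotopy.theorem}: the $K$-equivariant retraction $X\to R$ extends continuous $K$-equivariant data from $R$ to $X$ in a controlled way, and the covering homotopy theorem trivialises the dependence on the parameter $c\in C$, so that the additive problem reduces to classical Cartan Theorem B and Oka-Weil approximation for a coherent analytic sheaf on $X$ whose invariant direct image lives on $X\git K^\C$. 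These coherent-sheaf statements in the equivariant Stein setting are the content of Heinzner's theory.

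Next I would bootstrap from the additive to the multiplicative case. For (c), given a $\Q(R)$-cocycle with respect to a Stein cover, refine so that each cocycle value lies in the exponential image, take logarithms, solve in the additive sheaf, and re-exponentiate; a standard inductive patching over a Runge exhaustion of $X\git K^\C$ by Stein compacta, controlled by the additive Runge approximation, upgrades this to a global trivialisation. For (b), take the logarithm of the target section, additively approximate, and re-exponentiate. For (a), use (c) on a finite Stein cover to write a given $s\in\Q(R)(X\git K^\C)$ as a product of factors each close to the identity on a member of the cover, take the logarithm of each factor in the additive sheaf, and contract each logarithm linearly to $0$ before exponentiating, yielding a path in $\Q(R)(X\git K^\C)$ from $s$ to the identity.

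The main obstacle is that $\Q(R)$ is not the sheaf of sections of any coherent analytic sheaf: its sections live partly on the real-analytic Kempf-Ness set $R$ as continuous $K$-equivariant data and partly on all of $X$ as holomorphic $K^\C$-equivariant data, and the two pieces must agree compatibly with the NHC-parameter conditions. The delicate step is the linearised statement, and Theorem \ref{thm:HK-retraction} is precisely what bridges the gap, turning the continuous data on $R$ into a homotopically tame extension of the holomorphic data on $X$ with the retraction preserving closures of $K^\C$-orbits. Once this is combined with Theorem \ref{thm:homotopy.theorem} and the equivariant Cartan theory of \cite{Heinzner-Kutzschebauch}, the classical proof scheme from \cite{Cartan} goes through and delivers (a)--(c).
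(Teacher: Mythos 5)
This statement is not proved in the survey: it is quoted verbatim from \cite[p.~324]{Heinzner-Kutzschebauch} as the equivariant analogue of Cartan's \emph{th\'eor\`eme principal}, so there is no in-paper argument to compare yours against. That said, the overall scheme you describe --- linearise near the identity via a Lie-algebra-valued analogue of $\Q(R)$, prove the additive versions of (a)--(c) using equivariant Theorem B and Oka--Weil type approximation, then bootstrap back through the exponential --- is indeed the Cartan-style route that the cited source follows, and your identification of the hybrid (continuous-on-$R$ versus holomorphic-on-$X$) nature of $\Q(R)$ as the main difficulty is the right diagnosis. One anachronism: Theorem \ref{thm:homotopy.theorem} is from \cite{KLS-2018}, not available to (and not used by) Heinzner--Kutzschebauch; it is not logically circular, but it is not the mechanism in the cited proof.

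There is, however, a concrete gap in your treatment of (c). You propose to ``refine the cover so that each cocycle value lies in the exponential image'' before taking logarithms. Refining a cover only restricts the domains of the transition sections $g_{ij}$; it does not move their values into a neighbourhood of the identity, so this reduction to small cocycles fails as stated. The classical (and here essential) missing step is the \emph{topological splitting}: one first splits the cocycle by continuous $K$-equivariant data, writes $g_{ij}=c_i^{-1}h_{ij}c_j$ with $h_{ij}$ made uniformly close to the identity on compacta by approximating the $c_i$, and only then takes logarithms, finishing by an exhaustion argument driven by (a) and (b). This is exactly where the defining feature of the NHC setup enters --- the condition that $N$ is a strong deformation retract of $C$ and that sections are the identity over $N$ is what guarantees the continuous splitting exists --- and your sketch never invokes it. Without that step the logarithm is simply not defined on the given cocycle, so parts (a) and (c) as you have argued them do not close up.
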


Next we state Heinzner and Kutzschebauch's main result on the classification of principal $K$-$\G$-bundles (called $\G$-principal $K$-bundles in \cite{Heinzner-Kutzschebauch}).

\begin{theorem}  \cite[p.~341, 345]{Heinzner-Kutzschebauch}   \label{thm:HK-classification}
{\rm (a)}  Every topological principal $K$-$\G$-bundle on $X$ is topologically $K$-isomorphic to a holomorphic principal $K^\C$-$\G$-bundle on $X$.

{\rm (b)}  Let $P_1$ and $P_2$ be holomorphic principal $K^\C$-$\G$-bundles on $X$.  Let $c$ be a continuous $K$-equivariant section of $\Iso(P_1,P_2)$ over $R$.  Then there is a homotopy of continuous $K$-equivariant sections $\gamma(t)$, $t\in[0,1]$, of $\Iso(P_1,P_2)$ over $R$, such that $\gamma(0)=c$ and $\gamma(1)$ extends to a holomorphic $K$-equivariant isomorphism from $P_1$ to $P_2$.
\end{theorem}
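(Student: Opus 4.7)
I would follow the classical Cartan approach \cite{Cartan}, upgraded to the equivariant setting by the apparatus packaged in Theorem \ref{thm:HK-NHC}. The plan is to formulate each part as a cohomological statement about the sheaf $\Q(R)$ for a carefully chosen triple $(C,H,N)$, then invoke the vanishing clause $H^1(X\git K^\C,\Q(R))=0$; Proposition \ref{prp:topological.fact} plays the role of transferring $R$-level data to $X$-level data.

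For part (a), trivialize a given topological principal $K$-$\G$-bundle $P$ over the preimages of a locally finite Stein cover $(V_i)$ of $X\git K^\C$, producing a continuous $K$-equivariant \v{C}ech cocycle $(g_{ij})$ in the structure group of $P$. Iterating Theorem \ref{thm:HK-NHC}(b) over an exhaustion of $X\git K^\C$ by Runge pairs, and using Theorem \ref{thm:HK-NHC}(a) to assemble the approximations into a single continuous family, one produces a candidate holomorphic cocycle $(g'_{ij})$ together with an $NHC$-cocycle $(s_{ij})$ in $\Q(R)$ that interpolates, in the parameter $t\in C$, between $(g_{ij})\vert_R$ (at a ``topological'' slice) and $(g'_{ij})$ (at a ``holomorphic'' slice in $H$), with identity at the $N$-slice; the triple $(C,H,N)$ is arranged so that this data satisfies the $NHC$-conditions defining $\Q(R)$. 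Theorem \ref{thm:HK-NHC}(c) trivializes $(s_{ij})$ into $NHC$-sections $(s_i)$, whose evaluation at the holomorphic slice shows that $(g'_{ij})$ really is a holomorphic cocycle and hence defines a holomorphic principal $K^\C$-$\G$-bundle $P'$, while evaluation at the topological slice supplies a continuous $K$-equivariant isomorphism $P\vert_R\to P'\vert_R$. Proposition \ref{prp:topological.fact}, applied to the bundle $\Iso(P',P)$, extends this last isomorphism to a topological $K$-isomorphism over all of $X$.

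For part (b), apply the same template to the principal $\Aut P_1$-bundle $\Iso(P_1,P_2)$ with its induced $K^\C$-action. The given continuous $K$-equivariant section $c$ over $R$ is a topological trivialization of $\Iso(P_1,P_2)$ over $R$; comparing it with local holomorphic trivializations over a Stein cover of $X\git K^\C$ provides input data for a $\Q(R)$-cocycle whose topological end is $c$ and whose holomorphic end represents a sought global holomorphic $K^\C$-equivariant isomorphism $P_1\to P_2$. The vanishing Theorem \ref{thm:HK-NHC}(c), together with the path-connectedness of $\Q(R)(X\git K^\C)$ from Theorem \ref{thm:HK-NHC}(a), delivers the homotopy $\gamma(t)$ with $\gamma(0)=c$ and $\gamma(1)$ extending to a holomorphic $K^\C$-equivariant isomorphism $P_1\to P_2$.

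The main obstacle in both parts is the manufacture of the $NHC$-cocycle $(s_{ij})$: the holomorphic approximation furnished by Theorem \ref{thm:HK-NHC}(b) is only available on individual Runge pairs, so globalizing the topological data $(g_{ij})\vert_R$ (respectively $c$) into a cocycle-compatible $NHC$-section on all of $X\git K^\C$ requires an inductive Mittag-Leffler-style exhaustion. Theorem \ref{thm:HK-retraction} is what lets us control topological cocycles by their restriction to $R$ throughout this induction, and the density and path-connectedness in parts (a), (b) of Theorem \ref{thm:HK-NHC} are precisely what is needed to promote the sequence of local approximations to a single honest $NHC$-cocycle of $\Q(R)$ to which the vanishing Theorem \ref{thm:HK-NHC}(c) can finally be applied.
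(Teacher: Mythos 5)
The first thing to note is that the paper contains no proof of this statement: Theorem \ref{thm:HK-classification} is quoted, with page references, from Heinzner and Kutzschebauch \cite{Heinzner-Kutzschebauch}, and the survey uses it as an imported ingredient in the proof of Theorem \ref{mainthm:sections} rather than deriving it. There is therefore no in-paper proof to compare your argument against, and your proposal has to be judged on its own terms against the framework the survey describes.

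On those terms, your outline is a plausible reconstruction of the Grauert--Cartan strategy that Heinzner and Kutzschebauch adapt: reduce both parts to cohomological statements about the sheaf $\Q(R)$ for suitable triples $(C,H,N)$ and invoke the three clauses of the equivariant \emph{th\'eor\`eme principal}, Theorem \ref{thm:HK-NHC}, with part (b) phrased in terms of the principal homogeneous bundle $\Iso(P_1,P_2)$ under $\Aut P_1$. Two caveats. First, everything you defer is where essentially all the work lies: the equivariant local triviality over saturated neighbourhoods (via the slice theorem and Kempf--Ness analysis) needed to produce the local $NHC$-data interpolating between the topological cocycle (resp.\ the section $c$) and holomorphic data, and of course the proof of Theorem \ref{thm:HK-NHC} itself. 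The survey's own sketch of the analogous step in Theorem \ref{mainthm:sections} calls this local construction ``quite involved,'' so your sketch is a routing diagram rather than a proof. Second, your appeal to Proposition \ref{prp:topological.fact} at the end of part (a) is anachronistic: that proposition comes from the much later paper \cite{KLS-2018}. It is not circular (it rests on the covering homotopy theorem, not on the present result), but the extension of a continuous $K$-isomorphism from $R$ to all of $X$ can be obtained more directly from the strong deformation retraction of Theorem \ref{thm:HK-retraction}, which is the tool that was available in the original argument.
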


\begin{proof}[Sketch of proof of Theorem \ref{mainthm:sections}]
First we prove that the inclusion $\Gamma_\O(E)^K\hookrightarrow\Gamma_\mathscr C(E)^K$ induces a surjection of path components.  Let $P$ be the holomorphic principal $K^\C$-$\G$-bundle associated to $E$.  Take a continuous $K$-section $s$ of $E$ over $X$.  The preimage in $P$ of its image in $E$ is a topological principal $K$-$\H$-subbundle $Q$ of $P$.  We have a topological $K$-isomorphism $\sigma:Q\times^\H\G \to P$.  By Theorem \ref{thm:HK-classification}(a), $Q$ is topologically $K$-isomorphic to a holomorphic principal $K^\C$-$\H$-bundle $Q'$.  Choose a topological $K$-isomorphism $Q'\to Q$ and let $\tau:Q'\times^\H\G \to Q\times^\H\G$ be the induced isomorphism.  

By Theorem \ref{thm:HK-classification}(b) and Proposition \ref{prp:topological.fact}, the topological $K$-isomorphism $\sigma\circ\tau:Q'\times^\H\G \to P$ can be deformed to a holomorphic $K$-isomorphism over $X$.  Applying the deformation to $Q'$, viewed as a subbundle of $Q'\times^\H\G$, gives a deformation of $Q$ through topological principal $K$-$\H$-subbundles of $P$ to a holomorphic principal $K^\C$-$\H$-subbundle.  Pushing down to $E$ yields a deformation of $s$ through continuous $K$-sections of $E$ to a holomorphic section.

Now let $B$ be the closed unit ball in $\R^k$, $k\geq 1$, and let $\alpha_0:B\to \Gamma_\mathscr C(E)^K$ be a continuous map taking the boundary sphere $\partial B$ into $\Gamma_\O(E)^K$.  Choose a base point $b_0\in \partial B$.  We shall prove that there is a deformation $\alpha:B\times I\to\Gamma_\mathscr C(E)^K$ of $\alpha_0=\alpha(\cdot,0)$ with $\alpha_t(b_0)=\alpha_0(b_0)$ and $\alpha_t(\partial B)\subset\Gamma_\O(E)^K$ for all $t\in I$, and $\alpha_1(B)\subset\Gamma_\O(E)^K$.  This implies that the inclusion $\Gamma_\O(E)^K\hookrightarrow\Gamma_\mathscr C(E)^K$ induces a $\pi_{k-1}$-monomorphism and a $\pi_k$-epimorphism.

Consider the holomorphic group $K^\C$-bundle $\Aut\, P$ of principal $\G$-bundle automorphisms of $P$.  We seek a global $K$-equivariant $NHC$-section $\gamma_0$ of $\Aut\, P$ (with $C=B$, $H=\partial B$, $N=\{b_0\}$) such that for every $b\in B$, $\gamma_0(b)$, by its left action on $E$, maps $\alpha_0(b_0)$ to $\alpha_0(b)$, over $X$ if $b\in\partial B$ but only over $R$ if $b\in B\setminus \partial B$.

\noindent
\textbf{Claim.}  On a sufficiently small saturated neighbourhood of each point of $X$, that is, locally over $X\git K^\C$, such an $NHC$-section exists.

The proof of the claim is quite involved.  It requires a detailed analysis of the equivariant local structure of the bundles involved.  We will not attempt a summary, but refer the reader to \cite{KLS-2018}.

On the intersection of two such saturated neighbourhoods, two such $NHC$-sections differ by a $K$-equivariant $NHC$-section of the holomorphic group $K^\C$-bundle $\A$ of principal $\G$-bundle automorphisms of $P$ that fix $\alpha_0(b_0)$.  Gluing these local $NHC$-sections together to produce $\gamma_0$ amounts to splitting a cocycle, and the cocycle does split by Theorem \ref{thm:HK-NHC}(c) applied to $\A$.

By Theorem \ref{thm:HK-NHC}(a) applied to $\Aut\, P$, we can deform $\gamma_0$ through $K$-equivariant $NHC$-sections $\gamma_t$ of $\Aut\, P$, $t\in I$, to the identity section.  Let $\alpha_t(b)$ be the section of $E$ obtained by letting $\gamma_t(b)$ act on $\alpha_0(b_0)$.  For $b\in B\setminus \partial B$ and $t\in(0,1)$, $\alpha_t(b)$ is only defined over $R$.  Thus we have a deformation $\alpha:B\times I\to\Gamma_\mathscr C(E\vert_R)^K$, such that $\alpha$ factors through $\Gamma_\mathscr C(E)^K$ on $B\times\{0\}$ and through $\Gamma_\O(E)^K$ on $\partial B\times I\cup B\times\{1\}$, in such a way that $\alpha_t(b_0)$ is fixed and $\alpha_1$ takes all of $B$ to $\alpha_0(b_0)$.

At this stage the continuous sections are defined over the Kempf-Ness set only, whereas the holomorphic sections are defined over all of $X$.  The problem is that  the extension of continuous sections  using the strong deformation retraction of $X$ onto $R$  does not give back the  holomorphic sections for parameters in $\partial B$.

The proof would now be done if we could show that the following  commuting square has a continuous lifting.
\[ \xymatrix{
\partial B\times I\cup B\times\{0,1\} \ar^{\phantom{mmmmm}\beta}[r] \ar^j[d] & \Gamma_\mathscr C(E)^K \ar^p[d]  \\ B\times I \ar^\alpha[r] \ar@{-->}[ur] & \Gamma_\mathscr C(E\vert_R)^K
} \]
In fact, it suffices to show that $\alpha$ can be deformed, keeping $\beta$ fixed and the square commuting, until a lifting exists.  This can be deduced by homotopy-theoretic considerations from Proposition \ref{prp:topological.fact}.
\end{proof}

\section{Equivariantly Oka manifolds}  
\label{sec:manifolds}

\noindent
We begin by motivating the definition of a $G$-Oka manifold.  Here $G$ is a reductive complex Lie group.  Let $K$ be a maximal compact subgroup of $G$.  It is natural to say that a $G$-manifold $Y$ has the basic $G$-Oka property ($G$-BOP) if every continuous $K$-map from a Stein $G$-manifold $X$ to $Y$ can be deformed through such maps to a holomorphic map.

Consider the following consequences of $Y$ satisfying $G$-BOP.  First, if the $G$-action on $X$ is trivial, then a $K$-map $X\to Y$ is nothing but a plain map from $X$ to the submanifold $Y^K=Y^G$.  Hence, every continuous map $X\to Y^G$ can be deformed to a holomorphic map, so $Y^G$ has the basic Oka property (BOP).

Second, let $L$ be a closed subgroup of $K$.  The complexification of $L$ is a reductive closed subgroup $H$ of $G$.  Let $X$ be a Stein $H$-manifold and consider the adjunction
\[ \hom_G(\ind_H^G X, Y) \cong \hom_H(X, \res_H^G Y).\]
Here, the subscripts denote equivariance, $\hom$ refers to either continuous or holomorphic maps, $\res_H^G Y$ is $Y$ viewed as an $H$-manifold, $\ind_H^G X$ is the Stein $G$-manifold $G\times^H X$ (the geometric quotient of $G\times X$ by the $H$-action $h\cdot(g,x)=(gh^{-1}, hx)$), and $\cong$ denotes a homeomorphism that is natural in $X$ and $Y$.  We conclude that if $Y$ satisfies $G$-BOP, then $Y$ also satisfies $H$-BOP, so by the above, $Y^H$ satisfies BOP.

Approximation and interpolation can easily be included in the above and we are led to the following definition.

\begin{definition}
Let a reductive complex Lie group $G$ act holomorphically on a manifold $Y$.
 We say that $Y$ is $G$-\emph{Oka} if the fixed-point manifold $Y^H$ is Oka for all reductive closed subgroups $H$ of $G$.
\end{definition}

Taking $H$ to be the trivial subgroup, we see that a $G$-Oka manifold is Oka.  On the other hand, the following example shows that an Oka $G$-manifold need not be $G$-Oka, even for $G=\Z_2$.

\begin{example}  \cite[Example 2.7]{KLS-2021}\label{ex:not.G.Oka}
If $f\in\O(\C^n)$, $n\geq 2$, is a polynomial such that $df$ vanishes nowhere on $f^{-1}(0)$, then the affine algebraic manifold $X=\{(u,v,z)\in \C^{n+2} : uv=f(z)\}$ has the algebraic density property and is therefore Oka \cite{KK-2008}.  The fixed point set $W$ of the involution $u \leftrightarrow v$ of $X$ is smooth, given by the formula $u^2=f(z)$, and is a double branched covering of $\C^n$ with branch locus $f^{-1}(0)$.  Choose $f$ such that $f^{-1}(0)$ is not Oka; this is easy.  If $W$ is Oka, then our promised example is $Y=W$ and $Y^G=f^{-1}(0)$.  If it is not, then the example is $Y=X$ and $Y^G=W$.  (There is no particular $f$ for which we have determined whether $W$ is Oka or not.)
\end{example}

The expected basic properties of the equivariant Oka property are easily established straight from the definition or from basic properties of Oka manifolds.

\begin{proposition} \cite[Proposition 2.1]{KLS-2021}
Let a complex reductive group $G$ act holomorphically on a complex manifold $Y$.
\begin{enumerate}
\item  If $G$ acts trivially on $Y$, then $Y$ is $G$-Oka if and only if $Y$ is Oka.
\item  If $Y$ is $G$-Oka and $H$ is a reductive closed subgroup of $G$, then $Y$ is $H$-Oka with respect to the restriction of the action to $H$.
\item  If $Y_j$ is $G_j$-Oka, $j=1,2$, then $Y_1\times Y_2$ is $G_1\times G_2$-Oka.  
\item  If $Y_1$ and $Y_2$ are $G$-Oka, then $Y_1\times Y_2$ is $G$-Oka with respect to the diagonal action.
\item  A holomorphic $G$-retract of a $G$-Oka manifold is $G$-Oka.
\item  If $Y$ is the increasing union of $G$-Oka $G$-invariant domains, then $Y$ is $G$-Oka.
\end{enumerate}
\end{proposition}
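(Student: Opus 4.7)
The plan is to verify each of the six parts by a common recipe: take an arbitrary reductive closed subgroup $H$ of the relevant group, compute the $H$-fixed set of the manifold under consideration, express it in terms of fixed sets to which the hypothesis applies, and invoke a standard stability property of the (non-equivariant) class of Oka manifolds---namely that Oka is preserved under taking products, holomorphic retracts, and increasing unions of open subsets. No Oka-theoretic machinery beyond these closure properties is needed.

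Parts (1) and (2) I would dispose of directly from the definition. For (1), a trivial action gives $Y^H=Y$ for every subgroup $H$, so $Y$ being $G$-Oka collapses to $Y$ being Oka. For (2), every reductive closed subgroup of $H$ is also a reductive closed subgroup of $G$, and the fixed set $Y^{H'}$ depends only on the abstract action of $H'$, not on the ambient group, so the $H$-Oka condition is a subset of the conditions built into $G$-Okaness.

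Parts (3) and (4) rest on the pointwise computation of fixed sets in products. For (3), let $H\subset G_1\times G_2$ be a reductive closed subgroup and let $\pi_i\colon G_1\times G_2\to G_i$ be the projections. Writing $H=L^\C$ for a maximal compact subgroup $L\subset H$, the images $\pi_i(L)$ are compact and hence closed in $G_i$, and $\pi_i(H)=\pi_i(L)^\C$ is therefore a reductive closed subgroup of $G_i$. A point $(y_1,y_2)$ is $H$-fixed if and only if $h_1 y_1=y_1$ for every $h_1$ that arises as a first coordinate of an element of $H$, and symmetrically in the second factor, so
\[ (Y_1\times Y_2)^H=Y_1^{\pi_1(H)}\times Y_2^{\pi_2(H)}, \]
a product of Oka manifolds and hence Oka. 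Part (4) is the special case where, for $H\subset G$ acting diagonally, $(Y_1\times Y_2)^H=Y_1^H\times Y_2^H$.

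Finally, for (5), a $G$-equivariant holomorphic retraction $r\colon Y\to Z$ with inclusion $\iota\colon Z\hookrightarrow Y$ restricts, for every reductive closed subgroup $H\subset G$, to a retraction $Y^H\to Z^H$ with inclusion $Z^H\hookrightarrow Y^H$, exhibiting $Z^H$ as a holomorphic retract of the Oka manifold $Y^H$; holomorphic retracts of Oka manifolds are Oka. For (6), if $Y=\bigcup_n Y_n$ with each $Y_n$ a $G$-invariant open subset that is $G$-Oka, then for each reductive closed $H\subset G$ one has $Y^H=\bigcup_n Y_n^H$, an increasing union of Oka open subsets, and the Oka property is preserved under such unions. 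The only step that is not an immediate consequence of the definition is the verification in (3) that $\pi_i(H)$ is reductive and closed, and even that is a standard fact about reductive complex Lie groups; I anticipate no serious obstacle anywhere in the argument.
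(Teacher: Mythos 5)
Your proposal is correct and follows exactly the route the paper intends: each part reduces, via the fixed-point sets $Y^H$, to the standard closure properties of (non-equivariant) Oka manifolds under products, retracts, and increasing unions, with the only non-formal point being that $\pi_i(H)$ is a reductive closed subgroup of $G_i$, which you handle correctly. Nothing further is needed.
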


Here are three ways to construct new equivariantly Oka manifolds from old.  The first two are from \cite{KLS-2021}.  The localisation principle in (c) is due to Kusakabe \cite[Theorem A.5]{Kusakabe}.

\begin{proposition}
Let $G$ be a complex reductive group and let $H$ be a reductive closed subgroup of $G$.

{\rm (a)}  If $Y$ is  an $H$-manifold, then $G\times^H Y$ with its natural $G$-action is $G$-Oka if and only if $Y$ is $H$-Oka.

{\rm (b)}  Let $\pi:Y\to Z$ be a holomorphic fibre bundle with fibre $F$.  Assume that $Y$, $Z$, and $F$ are $G$-manifolds and $\pi$ is $G$-equivariant.  Further assume that $Z$ is Stein and $F$ is $G$-Oka.  Then $Y$ is $G$-Oka if and only if $Z$ is $G$-Oka.

{\rm (c)}  If $Y$ is covered by $G$-Oka $G$-invariant Zariski-open subsets, then $Y$ is $G$-Oka.  (Zariski-open means that the complement is a closed analytic subvariety.)
\end{proposition}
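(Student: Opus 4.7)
The plan is, for each reductive closed subgroup $L\subset G$, to identify the $L$-fixed-point submanifold in the ambient space and reduce its Oka-ness to that of the given data. Three standard transfer principles are used throughout: a holomorphic fibre bundle with Oka fibre over an Oka base has Oka total space (Forstneri\v c); a holomorphic principal bundle for a complex Lie group over an Oka base has Oka total space (special case of the preceding, since complex Lie groups are Oka); and a holomorphic retract of an Oka manifold is Oka.

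For (a), a direct calculation with the quotient $G\times^H Y=(G\times Y)/H$ shows that $[g,y]$ is $L$-fixed precisely when $g^{-1}Lg\subset H$ and $y\in Y^{g^{-1}Lg}$. Hence the projection $(G\times^H Y)^L\to (G/H)^L$ is a disjoint union of holomorphic fibre bundles, the fibre over $[g]$ being $Y^{g^{-1}Lg}$, a fixed-point set of a reductive closed subgroup of $H$. Each connected component of $(G/H)^L$ is a complex homogeneous space (of the form $N_G(L')/N_H(L')$ for the appropriate conjugate $L'$ of $L$ contained in $H$), and hence Oka. For the ``if'' direction, $Y$ being $H$-Oka makes every fibre Oka, so $(G\times^H Y)^L$ is Oka. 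For the ``only if'' direction, given reductive closed $L\subset H$, the clopen subset $W$ of $(G\times^H Y)^L$ projecting onto $N_G(L)/N_H(L)$ is identified with the associated bundle $N_G(L)\times^{N_H(L)} Y^L$, and is Oka as a clopen subset of the Oka manifold $(G\times^H Y)^L$. A direct check shows the action of the complex Lie group $N_H(L)$ on $N_G(L)\times Y^L$ by $k\cdot(n,y)=(nk^{-1},ky)$ is free, so $N_G(L)\times Y^L\to W$ is a principal $N_H(L)$-bundle; as $W$ is Oka and $N_H(L)$ is Oka, its total space $N_G(L)\times Y^L$ is Oka, and then $Y^L$ is a holomorphic retract via any fixed basepoint of $N_G(L)$, hence Oka. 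The main point is the identification of $W$ together with the freeness of the action.

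For (b), the $L$-fixed-point map $\pi^L\colon Y^L\to Z^L$ inherits the structure of a holomorphic fibre bundle with fibre $F^L$, by applying Luna's slice theorem to the $G$-equivariant projection $\pi$ locally at points of $Z^L$. By hypothesis $F^L$ is Oka. For the ``if'' direction, $Z^L$ Oka together with $F^L$ Oka yields $Y^L$ Oka. For the ``only if'' direction, $Z^L$ is Stein as a closed analytic subvariety of the Stein manifold $Z$; by the Oka principle for sections (Theorem \ref{mainthm:sections}, or its classical non-equivariant predecessor) applied to the Oka fibre $F^L$ over the Stein base $Z^L$, a continuous section of $\pi^L$ deforms to a holomorphic section, its existence being a consequence of standard obstruction theory together with $F^L$ being Oka. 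Such a section realises $Z^L$ as a holomorphic retract of the Oka manifold $Y^L$, which is therefore Oka. I expect the main obstacle to be verifying both the fibre-bundle structure on $\pi^L$ and the vanishing of obstructions to the continuous section, which together rest on the $G$-equivariant local analysis of $\pi$.

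For (c), the conclusion is immediate from Kusakabe's localisation principle. For any reductive closed $L\subset G$, the manifold $Y^L$ is covered by the subsets $(U_i)^L$, where $(U_i)$ is the given cover of $Y$ by $G$-invariant $G$-Oka Zariski-open subsets. Each $(U_i)^L$ is Oka because $U_i$ is $G$-Oka, and is Zariski-open in $Y^L$ since its complement $Y^L\cap (Y\setminus U_i)$ is a closed analytic subvariety of $Y^L$. Applying Kusakabe's \cite[Theorem~A.5]{Kusakabe} now yields that $Y^L$ is Oka, as required.
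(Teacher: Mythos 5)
Your treatments of (a) and (c) are essentially correct and follow the expected route: the decomposition of $(G\times^H Y)^L$ into clopen pieces $N_{G}(L')\times^{N_H(L')}Y^{L'}$ fibred over complex homogeneous spaces, combined with the up-and-down theorem for bundles with Oka fibre and the retract trick, is the standard argument for (a), and (c) is exactly the intended application of Kusakabe's localisation to the cover of $Y^L$ by the sets $(U_i)^L$. The problems are concentrated in (b), in the very step you flag as the main obstacle.

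First, the assertion that $\pi^L\colon Y^L\to Z^L$ is a holomorphic fibre bundle with fibre $F^L$ is not delivered by ``applying Luna's slice theorem''; it is where the entire content of (b) sits. The fibre of $\pi^L$ over $z\in Z^L$ is $(Y_z)^L$ for the action of $L\subset G_z$ on $Y_z$, and nothing in the hypotheses as you have used them ties that action to the given $G$-action on $F$. Concretely: let $G=\C^*$, let $Z=\C$ with the scaling action, let $F$ be the Oka manifold $X=\{uv=f(z)\}$ of Example \ref{ex:not.G.Oka} equipped with the \emph{trivial} $\C^*$-action (so $F$ is $G$-Oka), and let $Y=\C\times X$ with $t\cdot(z,p)=(tz,\,t\cdot p)$, where $\C^*$ acts on $X$ by $(u,v,z)\mapsto(tu,t^{-1}v,z)$. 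Then $\pi$ is equivariant and $Y\to Z$ is a fibre bundle with fibre $F$, yet $Y^{\mu_2}\cong f^{-1}(0)$, which is not Oka for suitable $f$. So the compatibility of the stabiliser actions on the fibres with the action on $F$ is an essential hypothesis that has to be made explicit and then genuinely used, and the local triviality of $\pi^L$ with fibre $F^L$ requires an actual equivariant trivialisation over the Stein base --- this is where Steinness of $Z$ enters --- which you have not supplied.

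Second, your ``only if'' direction is wrong even granting the bundle structure. A holomorphic fibre bundle with Oka fibre over a Stein base need not admit any continuous section: a topologically nontrivial principal $\C^*$-bundle over a Stein surface with $H^2\neq 0$ has Oka fibre and no section, so ``standard obstruction theory together with $F^L$ being Oka'' does not produce one. No section is needed: both implications of (b) follow from the full up-and-down theorem of Forstneri\v c --- for a holomorphic fibre bundle with Oka fibre, the total space is Oka if and only if the base is --- applied to $Y^L\to Z^L$. In your preamble you quote only the ``up'' half of that theorem and then replace the ``down'' half by a section-and-retract argument; that substitution is where the error enters.
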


The first main theorem of \cite{KLS-2021} is Theorem \ref{mainthm:manifolds} for the case when $G=K$ is a finite group.  Let us give a rough sketch of the proof of part (a).  Parts (b) and (c) then follow by equivariant adaptations of standard methods.  The proof of Theorem \ref{mainthm:manifolds} is completed in \cite[Section 5]{KLS-2021}.

\begin{proof}[Sketch of proof of part (a) of Theorem \ref{mainthm:manifolds}]  Since $G$ is finite, the Luna strata in $Q=X/G$ are finite in number.  Let $\pi:X\to Q$ be the quotient map.  We have a filtration $Q=Q_m\supset Q_{m-1}\supset\cdots\supset Q_0\supset Q_{-1}=\varnothing$ of $Q$, where the subvariety $Q_k$ is the union of the strata of dimension at most $k$.  Each difference $Q_k\setminus Q_{k-1}$, $k=0,\ldots,m$, is smooth and each of its connected components is contained in a Luna stratum.  We will produce a homotopy of continuous $G$-maps from a given map $f:X\to Y$ to a holomorphic map.  We let $f_0=f$ on $\pi^{-1}(Q_0)$ and proceed by induction in two steps for each $k=1,\ldots,m$.

\noindent\textit{Step 1.}  Suppose that we have a homotopy of $f\vert_{\pi^{-1}(Q_{k-1})}$, through continuous $G$-maps, to a holomorphic $G$-map $f_{k-1}:\pi^{-1}(Q_{k-1}) \to Y$.  Then $f_{k-1}$ and the homotopy extend to a $G$-invariant neighbourhood of $\pi^{-1}(Q_{k-1})$ in $X$.

For $k=1$, we start with the constant homotopy.  For $k\geq 2$, the input is Step 2 for $k-1$.  Using Siu's Stein neighbourhood theorem and Heinzner's equivariant embedding theorem, we move the problem into a $G$-module.  The holomorphic map extends by Cartan's extension theorem followed by averaging.  Finally, the homotopy extends since $Y$ is an absolute neighbourhood retract in the category of metrisable $G$-spaces.

\noindent\textit{Step 2.}  There is a homotopy of $f\vert_{\pi\inv(Q_k)}$, through continuous $G$-maps, to a holomorphic $G$-map $f_k:\pi^{-1}(Q_k)\to Y$.

This step may be reduced to an application of Forstneri\v c's Oka principle for sections of branched holomorphic maps (\cite[Theorem 2.1]{Forstneric-2003}; see also \cite[Theorem 6.14.6]{Forstneric-book}).  Forstneri\v c's result is the only known Oka principle in modern Oka theory that does not require the map in question to be a submersion.  A parametric version of the result is not available and appears difficult to prove.
\end{proof}

The remarkable fact that Oka manifolds can be defined in many nontrivially equi\-valent ways points to the concept being natural and important.  The same has been proved to some extent in the equivariant setting.  Above we defined the basic $G$-Oka property.  It is the property ascribed to the $G$-Oka manifold $Y$ in part (a) of Theorem \ref{mainthm:manifolds}.  The stronger property ascribed to $Y$ in part (b) is called the basic $G$-Oka property with interpolation ($G$-BOPI), and the property ascribed to $Y$ in part (c) is called the basic $G$-Oka property with approximation and jet interpolation ($G$-BOPAJI).  The definition of the basic $G$-Oka property with approximation ($G$-BOPA) should be obvious.  The following result combines \cite[Corollary 4.2]{KLS-2021} and \cite[Corollary A.4]{Kusakabe}.  The property $G$-$\textrm{Ell}_1$ is defined below.

\begin{theorem} 
For a complex manifold with an action of a finite group $G$, the following properties are equivalent:  $G$-BOPA, $G$-BOPI, $G$-BOPAJI, $G$-$\textrm{Ell}_1$, and the $G$-Oka property.
\end{theorem}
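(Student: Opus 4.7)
The plan is to prove equivalence by closing the cycle
\[ G\text{-Oka} \Rightarrow G\text{-BOPAJI} \Rightarrow G\text{-BOPA},\ G\text{-BOPI} \Rightarrow G\text{-Oka}, \]
and then fit $G$-$\textrm{Ell}_1$ in separately.  The first arrow is precisely Theorem \ref{mainthm:manifolds}: for finite $G$ the maximal compact subgroup is $K=G$ and the hypothesis that all stabilisers of the $G$-action on $X$ be finite is automatic, so parts (a), (b), and (c) apply and yield $G$-BOPAJI in full.  The middle implications $G$-BOPAJI $\Rightarrow$ $G$-BOPA and $G$-BOPAJI $\Rightarrow$ $G$-BOPI are immediate by discarding the extra approximation or jet-interpolation data from the conclusion.

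The crucial return leg is that $G$-BOPA or $G$-BOPI already implies $G$-Oka.  Fix a closed subgroup $H\leq G$; since $G$ is finite, $H$ is automatically reductive, so the goal is to show that $Y^H$ is Oka.  The key tool is the induction--restriction adjunction
\[ \Hom_G(\ind_H^G X,\, Y) \cong \Hom_H(X,\, \res_H^G Y) \]
recalled in the motivation for the definition of $G$-Oka, together with the fact that $\ind_H^G X = G\times^H X$ is Stein whenever $X$ is.  This transports the $G$-version of BOPA or BOPI for $Y$ to the corresponding $H$-version of the same property for $Y$.  Now specialise to $X$ a Stein manifold carrying the trivial $H$-action:  $H$-equivariant maps $X\to Y$ are exactly maps $X\to Y^H$, every closed analytic subvariety of $X$ is $H$-invariant, and every compact subset of $X$ is $H$-invariant.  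Hence the $H$-BOPA or $H$-BOPI for $Y$ degenerates to the classical BOPA or BOPI for $Y^H$.  The non-equivariant equivalence of these basic properties with the Oka property (the standard characterisation in Forstneri\v c's monograph) then gives that $Y^H$ is Oka for every such $H$, so $Y$ is $G$-Oka by definition.

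For $G$-$\textrm{Ell}_1$, the plan is to invoke Kusakabe's recent work \cite[Corollary A.4]{Kusakabe}.  The implication $G$-Oka $\Rightarrow$ $G$-$\textrm{Ell}_1$ is the easy direction, obtained by equivariantly trivialising the action on a slice neighbourhood and exhibiting a fibrewise equivariant dominating spray by linear algebra.  The substantive direction $G$-$\textrm{Ell}_1 \Rightarrow G$-Oka uses the equivariant localisation principle cited as part (c) of the preceding proposition: $G$-Oka is a local property with respect to covers by $G$-invariant Zariski-open subsets, and a $G$-$\textrm{Ell}_1$ structure produces exactly such a cover, on each member of which one can verify the $G$-Oka property by a direct equivariant adaptation of Gromov's linearisation argument.

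The main obstacle in the whole argument is Theorem \ref{mainthm:manifolds} itself, whose proof sketch was given in the previous section: the delicate part is the induction over the Luna stratification combined with Forstneri\v c's Oka principle for branched holomorphic maps.  The equivalence with $G$-$\textrm{Ell}_1$ is also nontrivial but is a black-box application of Kusakabe's localisation principle.  Everything else is a formal manipulation of the Frobenius reciprocity for induction and restriction, together with the non-equivariant equivalence of BOP-type properties with the Oka property.
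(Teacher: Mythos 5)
Your cycle for the four Oka-type properties is essentially the intended argument: $G$-Oka $\Rightarrow$ $G$-BOPAJI is exactly Theorem \ref{mainthm:manifolds} (with $K=G$ and the stabiliser hypothesis vacuous for finite $G$), and the return leg via the adjunction $\hom_G(\ind_H^G X,Y)\cong\hom_H(X,\res_H^G Y)$, specialised to trivial $H$-actions and combined with the non-equivariant equivalence of BOPA/BOPI with the Oka property, is precisely the mechanism behind the definition of $G$-Oka and behind \cite[Corollary 4.2]{KLS-2021}. You are also right to insist on BOPA or BOPI rather than plain BOP for $Y^H$, since BOP alone is not known to imply Oka. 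One small overstatement: $G$-BOPAJI $\Rightarrow$ $G$-BOPI is not literally ``discarding data'', because BOPI assumes $f$ holomorphic only \emph{on} the subvariety $Z$ while BOPAJI assumes holomorphy on a \emph{neighbourhood} of $Z$; one must first deform $f$ rel $Z$ to be holomorphic near $Z$ (a standard Stein-neighbourhood extension step, but a step nonetheless).

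The genuine problem is your treatment of $G$-$\textrm{Ell}_1$, where you have the difficulty of the two directions reversed and the sketches attached to each are not correct. The implication Oka $\Rightarrow$ $\textrm{Ell}_1$ is not ``easy'': even non-equivariantly it was open for many years and is a substantial recent theorem of Kusakabe; there is no argument by ``equivariantly trivialising on a slice neighbourhood and linear algebra'' -- a dominating spray over an \emph{arbitrary} holomorphic map $X\to Y$ cannot be produced fibrewise and glued in this way. Conversely, $\textrm{Ell}_1 \Rightarrow$ Oka-type properties is the classical direction (Gromov's linearisation: use the spray over a map to reduce approximation/extension across a bump to a problem about sections of a vector bundle), and it has nothing to do with the localisation principle -- an $\textrm{Ell}_1$ structure does not produce a cover by $G$-invariant Zariski-open $G$-Oka subsets, which is what \cite[Theorem A.5]{Kusakabe} requires. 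Since you ultimately invoke \cite[Corollary A.4]{Kusakabe} as a black box (as the survey itself does), the statement can still be closed off by citation, but the explanations you supply for what is inside that black box would not survive scrutiny and should be removed or corrected.
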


We believe that generalising Theorem \ref{mainthm:manifolds} to arbitrary reductive groups will require new methods.  In \cite[Section 5]{KLS-2021} we took the following step towards this goal.

\begin{theorem}
Let $G$ be a complex reductive group and $K$ a maximal compact subgroup of $G$.  Let $X$ be a Stein $G$-manifold and $Y$ be a $G$-Oka manifold.  Assume that $X$ has a single slice type, that is, the quotient map $X\to X\git G$ is a holomorphic $G$-fibre bundle.  Then every $K$-equivariant continuous map $X\to Y$ is homotopic, through such maps, to a $G$-equivariant holomorphic map.  
\end{theorem}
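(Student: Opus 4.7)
The plan is to use the holomorphic $G$-fibre bundle structure $\pi\colon X\to Q$ to reduce the problem to the standard Oka principle for maps from a Stein manifold to an Oka manifold, with the $G$-Oka hypothesis supplying the Oka property of the target. By Luna's slice theorem and the single-slice-type hypothesis, there exist a reductive closed complex subgroup $H\le G$ and an $H$-module $S$ such that the fibre of $\pi$ is $F=G\times^H S$, and $Q$ is a single Luna stratum, hence a smooth Stein manifold. The union of closed $G$-orbits in $F$ is $F_0=G\times^H S^H\cong (G/H)\times S^H$, giving a closed-orbit subbundle $X_0\subseteq X$ (a holomorphic $G$-fibre bundle over $Q$ with fibre $F_0$). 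After choosing an $H$-stable linear complement of $S^H$ in $S$, the fibrewise linear contraction yields a $G$-equivariant holomorphic retraction $r\colon X\to X_0$, provided $\Aut_G(F)$ preserves the chosen complement.

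The crucial reformulation proceeds as follows. The fixed-point subbundle $X_0^H\to Q$ carries a free action of $W:=N_G(H)/H$ on a $W$-factor, with $X_0^H/W$ a holomorphic vector bundle over $Q$ with fibre $S^H$, hence itself Stein. By the $(G,H)$-adjunction applied fibrewise, $G$-equivariant holomorphic maps $X_0\to Y$ correspond to holomorphic maps $X_0^H/W\to Y^H$. A parallel correspondence holds on the $K$-equivariant continuous side, with care taken to track the compact forms $L\subseteq K$ of $H$ and $N_K(L)/L\subseteq K$ of $W$, using that $X_0^L=X_0^H$ and $Y^L=Y^H$ for the holomorphic actions of reductive groups.

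Since $H$ is reductive and closed in $G$ and $Y$ is $G$-Oka, $Y^H$ is Oka. The standard Oka principle then deforms every continuous map $X_0^H/W\to Y^H$ through continuous maps to a holomorphic map; translating back, every $K$-equivariant continuous map $X_0\to Y$ is homotopic through such maps to a $G$-equivariant holomorphic map $\bar f\colon X_0\to Y$. The full homotopy for the original problem is then obtained by: (i) deforming $f$ through $K$-equivariant continuous maps to $f\circ r$ via a $K$-equivariant continuous homotopy between $\id_X$ and $r$; (ii) applying the previous step to $(f\circ r)|_{X_0}$; (iii) forming $\tilde f:=\bar f\circ r$, which is $G$-equivariant holomorphic as a composition of $G$-equivariant holomorphic maps.

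\textbf{The main obstacle} is step (i) above: producing a $K$-equivariant continuous homotopy between $\id_X$ and $r$. The natural candidate, $G$-equivariant fibrewise linear contraction along the complement of $S^H$, descends to $X$ only when $\Aut_G(F)$ preserves that complement, which is typically but not always the case. Otherwise, one uses the Kempf--Ness retraction (Theorem~\ref{thm:HK-retraction}), which is $K$-equivariant continuous and lands in a subset of $X_0$, and then patches with a further $K$-equivariant continuous homotopy inside $X_0$ to arrive at the image of $r$. Making this patching rigorous and carefully matching compact forms through the section-theoretic correspondences constitutes the main technical work.
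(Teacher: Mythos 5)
Your overall strategy -- reduce along the bundle $X\to Q=X\git G$ to an ordinary Oka principle with target $Y^H$ -- is in the right spirit, but as written the argument has genuine gaps, one of them fatal. First, the local model is misidentified. The fibre of $\pi$ over a point of $Q$ is not $G\times^H S$ with $S$ the slice representation; that space is a \emph{neighbourhood} of a closed orbit, fibering over $S\git H\cong S^H$. The fibre of $\pi$ is $F=G\times^H\N$, where $\N$ is the null cone of the slice representation, so $F$ contains a \emph{unique} closed orbit and $F_0=G/H$; the $S^H$-directions you place inside the fibre actually lie along the base $Q$. Consequently $X_0$ is a $G/H$-bundle over $Q$, $X_0^H$ is a bundle with fibre $W=N_G(H)/H$, and your ``$X_0^H/W$ is a vector bundle over $Q$'' collapses to $X_0^H/W\cong Q$. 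Relatedly, the asserted correspondence between $G$-maps $X_0\to Y$ and maps $X_0^H/W\to Y^H$ is only correct fibrewise: globally the transition functions of $X_0\to Q$ act through $W$ on $Y^H$, so $G$-maps $X_0\to Y$ are \emph{sections of an associated bundle} over $Q$ with fibre $Y^H$. That is still workable -- one then invokes the Oka principle for sections of holomorphic fibre bundles with Oka fibres over a Stein base, rather than the basic Oka principle for maps -- but it must be said, and it is exactly where the $G$-Oka hypothesis (Oka-ness of $Y^H$) is consumed.

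The fatal gap is the retraction $r:X\to X_0$. Your final map is $\tilde f=\bar f\circ r$, so $r$ must be $G$-equivariant and \emph{holomorphic}; nothing less will do. Fibrewise such a retraction exists canonically ($[g,s]\mapsto[g,0]$ on $G\times^H\N$), but its fibres, the slices $[g,\N]$, are not preserved by a general $G$-automorphism of $G\times^H\N$ (consider $[e,s]\mapsto[\exp\xi(s),s]$ for a nonconstant $H$-equivariant $\xi:\N\to\mathfrak g$ vanishing at $0$), so the fibrewise retraction need not glue across the transition functions of $X\to Q$. You flag this obstacle, but the proposed fallback does not repair it: the Kempf--Ness retraction is only $K$-equivariant and \emph{continuous}, so composing $\bar f$ with it yields a continuous map, not a holomorphic one, and step (iii) collapses. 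To close the argument you must either construct a genuine $G$-equivariant holomorphic retraction $X\to X_0$ (which does not follow from the data given), or avoid it altogether, e.g.\ by proving an extension/interpolation statement from $X_0$ (or from the Kempf--Ness set) to all of $X$ for $G$-equivariant holomorphic maps. The survey does not reproduce the proof of this theorem; it is carried out in \cite[Section 5]{KLS-2021}, where the passage between continuous $K$-data and holomorphic $G$-data goes through the Kempf--Ness set and a section-theoretic Oka principle over $Q$ rather than through a holomorphic retraction onto the closed-orbit bundle.
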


We now turn to the equivariant versions of two fundamental properties in Oka theory.  A manifold $Y$ is said to be elliptic -- Gromov's definition \cite{Gromov} marked the beginning of modern Oka theory -- if it carries a dominating spray, that is, there is a holomorphic map $s:E\to Y$, called a spray, defined on the total space of a holomorphic vector bundle $E$ on $Y$, such that $s(0_y)=y$ for all $y\in Y$, which is dominating in the sense that $s\vert_{E_y}:E_y\to Y$ is a submersion at $0_y$ for all $y\in Y$.  If a complex Lie group $G$ acts on $Y$, then we say that $s$ is a $G$-spray if the action on $Y$ lifts to an action on $E$ by vector bundle isomorphisms such that both $s$ and the projection $E\to Y$ are equivariant.  We say that $Y$ is \emph{$G$-elliptic} if it carries a dominating $G$-spray.

The weaker notion of relative $G$-ellipticity, also known as $G$-$\textrm{Ell}_1$ and mentioned above, is defined as follows.  The manifold $Y$ satisfies $G$-$\textrm{Ell}_1$ if for every holomorphic $G$-map $f$ from a Stein $G$-manifold $X$ to $Y$, there is a holomorphic $G$-vector bundle $E$ over $X$ and a dominating $G$-spray $s:E\to Y$ over $f$.  This means that $s(0_x)=f(x)$ for every $x\in X$, where $0_x$ is the zero vector in the fibre $E_x$ of $E$ over $x$, and $s\vert_{E_x}:E_x \to Y$ is a submersion at $0_x$.

We say that $Y$ is \emph{$G$-Runge} if for every Stein $G$-manifold $X$ and every $G$-invariant Runge domain $\Omega$ in $X$, the closure of the image of the restriction map $\O^G(X, Y) \to \O^G(\Omega, Y)$ is a union of path components (perhaps empty).  (To say that $\Omega$ is Runge means that $\Omega$ is Stein and the restriction map $\O(X)\to \O(\Omega)$ has dense image.)  In other words, approximability of holomorphic $G$-maps $\Omega\to Y$ by holomorphic $G$-maps $X\to Y$ is deformation-invariant.  When $G$ is the trivial group, the $G$-Runge property of $Y$ is one of the equivalent formulations of the Oka property.  When $G$ is reductive, the property of a domain $\Omega$ in $X$ being $G$-invariant and Runge can be described in several equivalent ways \cite[Section 6]{KLS-2021}.  For example, it is equivalent to say that $\Omega$ is the preimage of a Runge domain in $X\git G$.

Analogues of the basic results about the $G$-Oka property hold for both $G$-ellipticity and the $G$-Runge property \cite[Sections 3 and 6]{KLS-2021}, except we do not know a simple proof that a $G$-homogeneous space is $G$-Runge.  This is what we know about the relationships between the three properties.

\begin{theorem}  \label{thm:properties}
Let $G$ be a reductive complex group and $Y$ a $G$-manifold.

{\rm (a)}  If $Y$ is $G$-elliptic, then $Y$ is $G$-Runge.

{\rm (b)}  If $Y$ is $G$-Runge, then $Y$ is $G$-Oka.

{\rm (c)}  If $G$ is finite and $Y$ is Stein and $G$-Oka, then $Y$ is $G$-elliptic.
\end{theorem}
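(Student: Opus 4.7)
The plan is to treat the three implications in turn, reducing each to an approximation or extension problem on a Stein $G$-manifold.

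For (a), let $s\colon E\to Y$ be a dominating $G$-spray, $X$ a Stein $G$-manifold, $\Omega\subset X$ a $G$-invariant Runge domain, and $f\in\O^G(\Omega,Y)$ a map in the closure of the restriction image; I must show every $g$ in the path component of $f$ is likewise approximable. The key use of the spray is the parametrisation: for any $\tilde f\in\O^G(\Omega,Y)$, the map $\sigma\mapsto s\circ\sigma$ sends $G$-sections of $\tilde f^*E$ near the zero section onto a $C^0$-neighbourhood of $\tilde f$ in $\O^G(\Omega,Y)$. Subdivide a continuous path $f=f_0,\dots,f_N=g$ so that consecutive maps are close, and inductively: approximate $f_i$ by some $\tilde f_i\in\O^G(X,Y)$; write $f_{i+1}=s\circ\sigma_i$ for a small $G$-section $\sigma_i$ of $\tilde f_i^*E$ over $\Omega$; apply equivariant Oka--Weil approximation on the Stein $G$-pair $(X,\Omega)$ (a standard consequence of Heinzner's equivariant Cartan theory for coherent $G$-sheaves) to approximate $\sigma_i$ by a global $G$-section $\tilde\sigma_i$ on $X$; and take $s\circ\tilde\sigma_i\in\O^G(X,Y)$ as the global approximation of $f_{i+1}$.

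For (b), fix a reductive closed subgroup $H$ of $G$; I want to conclude that $Y^H$ is Oka. By Matsushima, $G/H$ is Stein, so for any Stein manifold $X'$ carrying the trivial $H$-action the induced manifold $G\times^H X'\cong G/H\times X'$ is a Stein $G$-manifold, and a Runge domain $\Omega'\subset X'$ induces a $G$-invariant Runge domain $G/H\times\Omega'$. The natural adjunction $\O^G(G/H\times X',Y)\cong\O(X',Y^H)$, together with its continuous analogue, is a homeomorphism, so the $G$-Runge hypothesis for $Y$ translates into the statement that the closure of the image of $\O(X',Y^H)\to\O(\Omega',Y^H)$ is a union of path components for every Runge pair $(X',\Omega')$. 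Specialising to $X'=\C^n$ and $\Omega'$ a convex neighbourhood of a compact convex $K$, contractibility of $\Omega'$ makes any holomorphic map $\Omega'\to Y^H$ homotopic to a constant, which extends globally; hence the map lies in the closure and can be approximated on $K$ by an entire map $\C^n\to Y^H$. This is Forstneri\v{c}'s convex approximation property, which is equivalent to $Y^H$ being Oka.

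For (c), assume $G$ is finite and $Y$ is Stein and $G$-Oka. By Heinzner's equivariant embedding theorem, embed $Y$ as a closed $G$-submanifold of a $G$-module $V$. Averaging a Docquier--Grauert retraction over the finite group $G$ yields a $G$-equivariant holomorphic retraction $r\colon U\to Y$ from a $G$-invariant Stein neighbourhood $U$ of $Y$ in $V$, and the formula $s(y,v)=r(y+v)$ defines a partial dominating $G$-spray on a neighbourhood of the zero section of the trivial $G$-bundle $Y\times V\to Y$: it is $G$-equivariant because $r$ and addition on $V$ are, and the vertical derivative at $(y,0)$ is $dr_y|_V$, surjective onto $T_yY$ since $r|_Y=\id$. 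The main obstacle is upgrading this partial spray to one defined on the whole of a $G$-vector bundle over $Y$; the passage from partial to global dominating sprays is the crux of the Stein--Oka--elliptic hierarchy even non-equivariantly, and here the $G$-Oka hypothesis must be used to construct the required global extension, for example by equivariantly gluing local sprays across the Luna stratification of $Y\git G$ via the Oka property of each fixed-point submanifold $Y^H$.
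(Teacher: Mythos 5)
Your parts (a) and (b) follow essentially the route the paper indicates. For (a) you implement Gromov's linearisation method equivariantly: pull back the spray bundle along a global approximant, invert the spray near the zero section to convert approximation of maps into approximation of sections, and approximate sections using equivariant Oka--Weil theory (Roberts' equivariant Theorem B); this is exactly the argument the paper describes as ``somewhat involved'', and the points you gloss over are the genuine sources of that involvedness (domination holds only at the zero section, so the inversion and the required closeness are uniform only on compact sets, one needs an equivariant splitting off of $\Ker ds$ along the zero section, and an exhaustion/induction is needed to get approximation on all compacts simultaneously). For (b), your reduction via the induced Stein $G$-manifold $G/H\times X'$ and the adjunction, showing that each $Y^H$ has the non-equivariant Runge property and hence is Oka (you re-derive this last implication via convex approximation, where your holomorphic contraction $f_t(z)=f(tz+(1-t)z_0)$ correctly produces a path inside $\O(\Omega',Y^H)$), is precisely the ``quick reduction'' the paper has in mind.

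Part (c) has a genuine gap. The local spray $s(y,v)=r(y+v)$ is the correct starting point, but the entire content of the implication is the passage from this locally defined spray to a globally defined dominating $G$-spray, and you leave that step unresolved: ``equivariantly gluing local sprays across the Luna stratification of $Y\git G$'' is not the mechanism and is not carried out. The well-known non-equivariant proof finishes with a single application of the Oka property with jet interpolation, and the ``straightforward adaptation'' does the same equivariantly: regard $h(y,v)=r(y+v)$ as a holomorphic $G$-map defined on a neighbourhood of the $G$-invariant subvariety $Z=Y\times\{0\}$ in the Stein $G$-manifold $Y\times V$ (all stabilisers are finite since $G$ is), extend it to a $G$-equivariant continuous map $Y\times V\to Y$ by precomposing with a $G$-invariant cutoff in the fibre variable, and apply Theorem \ref{mainthm:manifolds}(c) with $\ell=1$ (equivalently, $G$-BOPAJI, which for finite $G$ is equivalent to the $G$-Oka property by the theorem combining \cite[Corollary 4.2]{KLS-2021} and \cite[Corollary A.4]{Kusakabe}) to obtain a global holomorphic $G$-map $s:Y\times V\to Y$ agreeing with $h$ to first order along $Z$. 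Since the spray condition $s(0_y)=y$ and domination depend only on the $1$-jet along the zero section, this $s$ is a dominating $G$-spray on the trivial $G$-vector bundle $Y\times V$, so $Y$ is $G$-elliptic.
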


The proof of (a) is somewhat involved: it is an equivariant version of Gromov's linearisation method, sketched in \cite[Section 1.4]{Gromov}.  It uses some equivariant Stein theory, most importantly the equivariant version of Theorem B of Cartan and Serre, due to Roberts \cite{Roberts}.  The proof of (b) is a quick reduction to the fact that manifold satisfying the Runge property for trivial actions is Oka.  The proof of (c) is a straightforward adaptation of the well-known proof in the case of no action.

Now let $Y$ be a $G$-homogeneous space and take the trivial $G$-vector bundle $Y\times\mathfrak g\to Y$, where $G$ acts on its Lie algebra $\mathfrak g$ by the adjoint representation.  Then $Y\times\mathfrak g\to Y$, $(y,v)\mapsto \exp(v)\cdot y$, is a dominating $G$-spray, so $Y$ is $G$-elliptic.  By Theorem \ref{thm:properties}(a), $Y$ is $G$-Runge.

\section{Open problems}  
\label{sec:problems}

\begin{enumerate}
\item  Does the parametric version of Theorem \ref{mainthm:isomorphisms1} hold?  That is, in the setting of the theorem, is the inclusion of the space of $G$-biholomorphisms $X\to Y$ into the space of strict $G$-diffeomorphisms a weak homotopy equivalence with respect to the appropriate topology?  The same question for strong $G$-homeomorphisms.
\item  Is there a counterexample to Theorem \ref{mainthm:isomorphisms2} if $X$ does not have the infinitesimal lifting property?
\item Does Theorem \ref{thm:small-modules} hold if $G^0$ is a torus $(\C^*)^{n}$ of dimension $n\geq 2$?
\item  Build approximation and interpolation into Theorem \ref{mainthm:sections}.
\item  Show that the weak homotopy equivalence in Theorem \ref{mainthm:sections} is a genuine homotopy equivalence under suitable conditions.  There are, by now, several such results in the literature, the first in \cite{Larusson-2015}.  The same question for Theorem \ref{mainthm:principal}.
\item  Does Theorem \ref{mainthm:manifolds} hold for arbitrary actions of a reductive group $G$?  Does it hold for actions for which all the $G$-orbits are closed?
\item  Does the parametric version of Theorem \ref{mainthm:manifolds} hold?
\item  Let $G$ be a reductive group and $Y$ be a $G$-Oka manifold.  Is $Y$ $G$-Runge?
\end{enumerate}

\bibliographystyle{amsalpha}
\bibliography{Survey}

\end{document}